\title[Congruences of Eisenstein series of level $\Gamma_1(N)$]{Congruences of Eisenstein series of level $\Gamma_1(N)$ \protect\\via Dieudonn\'e theory of formal groups}
\subjclass[2020]{Primary 11F33, 14L05.}
\author{Ningchuan Zhang}
\address{Department of Mathematics, Indiana University Bloomington, Bloomington, IN 47405, USA}
\date{}
\begin{document}
	\begin{abstract}
		In this paper, we give a new explanation of congruences of Eisenstein series of level $\Gamma_1(N)$ and character $\chi$. Our approach is based on Katz's algebro-geometric explanation of $p$-adic congruences of normalized Eisenstein series $E_{2k}$ of level $1$. One crucial step in our argument is to reformulate a Riemann-Hilbert correspondence in Katz's explanation in terms of Dieudonn\'e theory of height $1$ formal $A$-modules and their finite subgroup schemes. We give a  generalization of this Riemann-Hilbert correspondence in terms of formal groups of height greater than $1$. 
	\end{abstract}
	\maketitle
	\tableofcontents
	
 	In \cite{padic}, Katz gave an algebro-geometric explanation of the $p$-adic congruences of normalized Eisenstein series $E_{2k}$ of weight $2k$ and level $1$. Using a Riemann-Hilbert type correspondence (\Cref{thm:padic41}) and a theorem of Igusa, Katz showed:
	\begin{thm*}\textup{\cite[Corollary 4.4.1]{padic}}  The followings are equivalent:
		\begin{enumerate}
			\item There is a modular form of weight $2k$ whose $q$-expansion is congruent to $1$ modulo $p^m$.
			\item The integer $2k$ is divisible by $(p-1)p^{m-1}$ if $p>2$ and by $2^{\alpha(m)}$ if $p=2$, where $\alpha(1)=0,\alpha(2)=1$, and $\alpha(m)=m-2$ if $m\ge 2$.
		\end{enumerate}
	\end{thm*}
	In proving this theorem, Katz essentially used the fact that (2) is equivalent to: 
	\begin{enumerate}
		\item[(2')] \emph{The $2k$-th power representation $\Zp^{\otimes 2k}$ of $\Zpx$ is trivial mod $p^m$.}
 	\end{enumerate}
	As the space of modular forms of level $1$ and weight $2k$ are spanned by cusp forms and the normalized Eisenstein series 
	\begin{equation*}
			E_{2k}=1-\frac{4k}{B_{2k}}\sum_{n=1}^\infty \sigma_{2k-1}(n)q^n,
	\end{equation*} 
	Katz's theorem gives an upper bound of the $p$-adic valuation of $E_{2k}-1$.  The theorem of Clausen and Von Staudt on Bernoulli numbers then implies $E_{2k}$ does realize the congruence of modular forms of level $1$ and weight $2k$ predicted by that of the $\Zpx$-representation $\Zp^{\otimes 2k}$. The first goal of this paper is to adapt Katz's method to study congruences between modular forms in 
	\begin{equation*}
	M_k(\Gamma_1(N),\chi)=M_k(\Gamma_1(N))^{\chi^{-1}}, 
	\end{equation*}
	 and the constant function $1$, where $\chi\colon\znx\to \Cx$ is a primitive Dirichlet character of conductor $N$. By the $q$-expansion principle, such a modular form cannot be a cusp form and thus must have an Eisenstein series as a summand.  Our strategy is to study a $p$-adic version of this problem and then assemble the congruences at each prime. As we will be working integrally and $p$-adically, it is necessary to specify the meanings of level structures. Let $\Mell(\mu_N)$ be the stack over $\Z$ whose $R$ points are:
	\begin{equation*}
	\Mell(\mu_N)(R)=\left\{(C/R,\eta\colon\mu_N\hookrightarrow C)\left\mid \begin{array}{c}
	C\text{ is an elliptic curve over }R,\\ \eta\text{ is an embedding of group schemes}
	\end{array}
	\right.\right\}.
	\end{equation*}
	When $R$ contains a primitive $N$-th root of unity, a $\mu_N$-level structure on an elliptic curve $C$ is (non-canonically) equivalent to a classical $\Gamma_1(N)$-level structure $\underline{\Z/N}\hookrightarrow C$ on $C$. Write $N=p^vN'$ where $p$ is coprime to $N'$. The $p$-adic version of $\Mell(\mu_N)$ we will consider is $\Mello(p^v,\Gamma_1(N'))$, whose $R$ points are
	\begin{equation*}
	\Mello(p^v,\Gamma_1(N'))(R)=\left\{(C/R,\eta_p,\eta')\left\mid \begin{array}{c}
	C\text{ is a $p$-ordinary elliptic curve over }R,\\\eta_p\colon \mu_{p^{v}}\simto \wh{C}[p^v],\quad \eta'\colon \Z/N'\hookrightarrow C[N]
	\end{array}
	\right. \right\},
	\end{equation*}
	where $\wh{C}$ is the formal group of the elliptic curve $C$. The stack $\Mello(p^v,\Gamma_1(N'))$ is equivalent to the $p$-completion of $\Mell(\mu_N)$ when $p$ divides $N$, and is an open substack otherwise. Now let $\chi\colon\znx\to\Cpx$ be a $p$-adic primitive Dirichlet character of conductor $N$. Write $\Zp[\chi]=\Zp[\imag \chi]$. The character uniquely factors as product $\chi=\chi_p\cdot \chi'$, where $\chi_p$ and $\chi'$ have conductors $p^v$ and $N'$, respectively. Let $k$ be an integer such that $(-1)^k=\chi(-1)$. Denote by $\Zp^{\otimes k}[\chi]$ the $\Zpx\x\zx{N'}$-representation associated to the character 
	\begin{equation*}
		\Zpx\x\zx{N'}\xrightarrow{(a,b)\mapsto a^k\cdot\chi_p(a)\cdot\chi'(b)} (\Zp[\chi])^\x.
	\end{equation*}
	The first main result of this paper is:
	\begin{thm*}[Main Theorem \ref{thm:main}]
		Let $\Ical\trianglelefteq\Zp[\chi]$ be an ideal and $k\ge 3$. The followings are equivalent:
		\begin{enumerate}[label=\textup{(\roman*).}]
			\item There is a modular form $f$ over the stack $\Mello(p^v,\Gamma_1(N'))$ of weight $k$ and type $\chi$, with $q$-expansion $f(q)\in 1+\Ical q\llb q\rrb$.\setcounter{enumi}{4}
			\item The $\Zpx\x\zx{N'}$-representation $\Zp^{\otimes k}[\chi]$ is trivial modulo $\Ical$.
		\end{enumerate}
	\end{thm*}
	The proof of the Main Theorem has three major steps:
	\begin{enumerate}[label=\Roman*., leftmargin=*]
		\item Identify the Dirichlet character $\chi$ with the Galois descent data of a formal $\Zp[\chi]$-module $\wh{C}^{k,\chi}$ over $\Mello(\Gamma_0(N'))$, whose $R$-points are 
		\begin{equation*}
				\Mello(\Gamma_0(N'))(R)=\left\{(C/R,H\subseteq C[N'])\left\mid \begin{array}{c}
				C\text{ is a $p$-ordinary elliptic curve over }R,\\ \underline{\Z/N'}\cong H\subseteq C[N] \text{ is a flat subgroup scheme}
			\end{array}
			\right. \right\}.
		\end{equation*}
		This allows us to translate congruences of modular forms in $H^0(\Mello(p^v,\Gamma_1(N')),\bfo{k}\otimes\Zp[\chi])^{\chi^{-1}}$ to those of elements in the Dieudonn\'e module $\Dbb(\wh{C}^{k,\chi})$ of $\wh{C}^{k,\chi}$.
		\item Reformulate a Riemann-Hilbert correspondence in Katz's explanation in terms of the Dieudonn\'e module and the Galois descent data of height $1$ formal $A$-modules. In \Cref{thm:rhc_formal_A-mod}, we first relate congruences of generators in $\Dbb(\wh{C}^{k,\chi})$ to those of that of finite subgroup schemes of $\wh{C}^{k,\chi}$. The latter is then connected to congruences of the Galois representation $[\rho^{k,\chi}]$ attached to $\wh{C}^{k,\chi}$ via Galois descent. In \Cref{thm:rhc} and \Cref{thm:rhc_cong}, we give a generalization of this correspondence in terms of formal groups of heights greater than $1$. 
		\item  Factor the character $\rho^{k,\chi}$ associated to the Galois representation $[\rho^{k,\chi}]$ and then use a relative version of Igusa's theorem to reduce the group to $\Zpx\x\zx{N'}$.
	\end{enumerate}	
	The implication from (i) to (v) also follows from \cite[Lemma 1.2.2]{Katz_Eisenstein_measure} when $p$ does not divide $N$. Our method therefore gives a new explanation of the connection between congruences of modular forms and $p$-adic representations, via the Dieudonn\'e theory of formal $A$-modules. 
	
	Let $\Ical_\text{(i)}, \Ical_\text{(v)}\trianglelefteq \Zp[\chi]$ be smallest ideal satisfying (i) and (v), respectively. \Cref{thm:main} implies $\Ical_\text{(i)}=\Ical_\text{(v)}$. As the smallest ideal has generators with the largest $p$-adic valuations, we will call these smallest ideals "the maximal congruences" in both scenarios. The maximal congruences of the $\Zpx\x\zx{N'}$-representations $\Zp^{\otimes k}[\chi]$ are easy to compute, since the group is topologically finitely generated. The result of this computation is recorded in \Cref{thm:rep_congruence}.  Following this, we are particularly interested to find explicit formulas of modular forms in the Eisenstein subspace $\Ecal_{k}(p^v,\Gamma_1(N'),\chi)$ that match the congruences of $\Zp^{\otimes k}[\chi]$. Write $\chi=\chi_p\cdot\chi'$ as above. When $|\imag \chi'|$ is \emph{not} a power of $p$ or $|\imag \chi'|=1$, the maximal congruence of modular forms in the $H^0(\Mello(p^v,\Gamma_1(N')),\bfo{k}\otimes\Zp[\chi])^{\chi^{-1}}$ is realized by:
	\begin{equation*}
	E_{k,\chi}(q)=1-\frac{2k}{B_{k,\chi}}\sum_{n=1}^{\infty}\sigma_{k-1,\chi}(n)q^n, \text{ where }\sigma_{m,\chi}(n)=\sum_{0<d\mid n}\chi(d)d^m.
	\end{equation*}
	This is in particular an Eisenstein series. Here $B_{k,\chi}$ is the $k$-th \textbf{generalized Bernoulli number} associated to the Dirichlet character $\chi$. They are defined to as the Taylor coefficients of the following function:
	\[ F_\chi(t)=\sum_{a=1}^{N}\frac{\chi(a)te^{at}}{e^{Nt}-1}=\sum_{n=0}^{\infty}B_{k,\chi}\frac{t^k}{k!},\qquad \chi(a)=0\text{ if }(a,N)\ne 1.\]
	Arithmetic properties of $B_{k,\chi}$ were studied in \cite{Carlitz_GBN}. The argument in our paper therefore relates the \emph{denominator} of $\frac{B_{k,\chi}}{2k}$ to congruences of the $\Zpx\x\zx{N'}$-representations $\Zp^{\otimes k}[\chi]$ in this case.
	
	When $|\imag \chi'|>1$ is a power of $p$, the maximal congruence is realized as a linear combination of $E_{k,\chi}$ with some other modular forms in $H^0(\Mello(p^v,\Gamma_1(N')),\bfo{k}\otimes\Zp[\chi])^{\chi^{-1}}$. In this case, congruences of the representation $\Zp^{\otimes k}[\chi]$ shed light on the \emph{numerator} of $\frac{B_{k,\chi}}{2k}$. One such example is:
	\begin{cor*}[\ref{prop:numerator_valuation} and \ref{cor:cong_gp_coh}]
		Let $p>2$ be a prime and $\chi\colon\zx{\ell}\to \Cpx$ be a Dirichlet character of conductor $\ell$ such that $\ell\neq p$ is a prime number and  $|\imag \chi'|=|\imag \chi|$ is a $p$-power. Denote the maximal ideal of $\Zp[\chi]$ by $\mfrak$. Assume $(-1)^k=\chi(-1)$. Then $\frac{B_{k,\chi}}{2k}$ is an algebraic $p$-adic integer in $\Zp[\chi]$ by \textup{\cite[Theorem 1]{Carlitz_GBN}}. We then have
		\begin{equation*}
		\frac{B_{k,\chi}}{2k}\in \mfrak, \text{ when } (p-1)\nmid k.
		\end{equation*} 
		This relation is reflected in the fact that the maximal congruence of the $\Zpx\x\zx{N'}$-representation $\Zp^{\otimes k}[\chi]$ is $(1)$ when $(p-1)\nmid k$.
	\end{cor*}
	We further note that congruences of $p$-adic representations of $\Zpx\x\zx{N'}$ are related to its group cohomology.
	\begin{cor*}[\ref{cor:cong_gp_coh}] Let $\Ical\trianglelefteq \Zp[\chi]$ be an ideal. The followings are equivalent:
		\begin{enumerate}
			\item The ideal $\Ical$ is the maximal congruence of modular forms in $H^0(\Mello(\Gamma_0(N')),\bfomega^{k,\chi})$.
			\item The ideal $\Ical$ is the maximal congruence of $\Zp^{\otimes k}[\chi]$ as a trivial $\Zpx\x\zx{N'}$-representation. 
			\item $H^1_c(\Zpx\x\zx{N'};\Zp^{\otimes k}[\chi])\cong  \Zp[\chi]/\Ical$.
		\end{enumerate}
	\end{cor*}
	Combined with the explicit formulas of the Eisenstein series that realizes the maximal congruences, \Cref{cor:cong_gp_coh} implies that the group cohomology $H^1_c(\Zpx\x\zx{N'};\Zp^{\otimes k}[\chi])$ computes the denominator of $\frac{B_{k,\chi}}{2k}$ when $|\imag \chi'|$ is not a power of $p$ and $(-1)^k=\chi(-1)$. We conclude this paper by noting that when the character $\chi$ is trivial, the continuous group cohomology $H^1_c(\Zpx;\Zp^{\otimes 2k})$ appears in other fields of mathematics:
	\begin{itemize}
		\item In chromatic homotopy theory, this group cohomology computes the $p$-primary part of the image of the $J$-homomorphism in the stable homotopy groups of the sphere. In this way, we have given a new explanation of the connection between congruences of the normalized Eisenstein series $E_{2k}$ and the image of $J$.
		\item In algebraic $K$-theory, a theorem of Soul\'e implies this group cohomology is isomorphic to certain \'etale cohomology which appears in the Lichtenbaum Conjecture for the Riemann $\zeta$-function. 
	\end{itemize}

	\subsection*{Notations and conventions}
	\begin{itemize}
		\item Denote the \Teichmuller character by the Greek letter $\omega$  and denote the sheaf of invariant differentials on various stacks by the boldface version of the same Greek letter $\bfomega$. 
		\item Write $\Cp$ for the analytic completion of $\overline{\Qp}$, the algebraic closure of the rational $p$-adics.
		\item Write $\underline{G}$ for the constant $G$-group scheme.
		\item Write $\Gah$ and $\Gmh$ for the additive and multiplicative formal groups, respectively. Denote by $\mu_N$ the $N$-torsion subgroup scheme of $\Gmh$, and by $\alpha_p$ the kernel of the $p$-th power isogeny of $\Gah$ over an $\Fp$-algebra.
		\item By a height $1$ or slope $1$ formal group $\Gh$, we mean $\Gh$ is \'etale locally isomorphic $\Gmh^{\oplus d}$, where $d$ is the dimension of $\Gh$. 
		\item Let $M$ be a $G$-representation in an $R$-modules and $\chi\colon G\to R^\x$ be a character. We write $M^\chi$ for the $\chi$-eigensubspace of $M$.
		\item We will suppress the $\Zp$ in $M \otimes_{\Zp}  N$  when $M$ and $N$ are both $\Zp$-modules.
		\item Let $\chi$ be a Dirichlet character of conductor $N$. Write $N=p^vN'$, where $p\nmid N'$. Then there is a unique decomposition $\chi=\chi_p\chi'$, where the conductors of $\chi_p$ and $\chi'$ are $p^v$ and $N'$, respectively. We fix the meanings of $N$, $N'$, $v$, $\chi_p$, and $\chi'$ throughout the paper.
		\item We will write "$\cong$" for 1-categorical isomorphisms and "$\simeq$" for equivalences of categories or stacks. 
	\end{itemize}
	\subsection*{Acknowledgments}
	I would like to thank Matt Ando for advising me to think about the implications of \cite[Chapter 4]{padic} in homotopy theory, which eventually leads to this paper; Patrick Allen for patiently answering my many questions on modular forms and moduli of elliptic curves, and for correcting a few mistakes I made in an earlier version of this paper. I would also like to thank Mark Behrens, Dominic Culver, Elden Elmanto, Jacklyn Lang, Charles Rezk, Shiyu Shen, and Vesna Stojanoska for many helpful discussions and comments. Finally, I would like to thank the anonymous referees for many helpful comments and suggestions on revisions.
	
	N. Zhang was partially supported by NSF grant DMS-2304719, during the revision of this work. 
	\section{$\mu_N$-level structures on elliptic curves and modular forms}
	\subsection{The Eisenstein subspace}
	Let $\chi\colon \znx\to \Cx$ be a primitive Dirichlet character of conductor $N$. We are now going to introduce the Eisenstein series of level $\Gamma_1(N)$ and character $\chi$, following \cite[\S 5.1]{Hida_Eisenstein} and \cite[Chapter 5]{Stein_modular_forms}.  
	\begin{defn}
		Let $\Gamma\le\SL_2(\Z)$ be a congruence subgroup. Let $\mathbb{T}\subseteq \mathrm{End}(M_k(\Gamma))$ be the Hecke algebra acting on $M_k(\Gamma)$. Then there is decomposition of $\mathbb{T}$-modules: 
		\begin{equation}\label{eqn:Tmod_decomp}
		M_k(\Gamma)=\Ecal_{k}(\Gamma)\oplus \mathcal{S}_k(\Gamma),
		\end{equation}
		where $\mathcal{S}_k(\Gamma)$ is subspace of cusp forms, i.e. modular forms that vanish at \emph{all} cusps. The subspace $\Ecal_{k}(\Gamma)$ is the \textbf{Eisenstein subspace} of weight $k$ and level $\Gamma$.
	\end{defn}
	\begin{exmp}
		Below is a family of Eisenstein series in $\Ecal_{k}(\Gamma_1(N),\chi)$. Let $\chi_1\colon\zx{N_1}\to \Cx$ and $\chi_2\colon\zx{N_2}\to\Cx$ be two primitive Dirichlet characters of conductors $N_1$ and $N_2$. Define an Eisenstein series:
		\begin{equation*}
		G_{k,\chi_1,\chi_2}(z)=\sum_{(n,m)\ne (0,0)}\frac{\chi_1(m)\chi_2^{-1}(n)}{(mNz+n)^k}.
		\end{equation*}
		This is an Eisenstein series of weight $k$ and level $N_1N_2$.
	\end{exmp}
	\begin{thm}[{\cite[Theorem 4.5.2]{Diamond-Shurman}}]\label{thm:eisenstein_subspace_basis}
		Let $N>1$ be a positive integer and $k\ge 3$. The Eisenstein series $\{G_{k,\chi_1,\chi_2}(tz)\mid (N_1N_2t)|N, \chi_2/\chi_1=\chi\}$ forms a basis of $\Ecal_{k}(\Gamma_1(N),\chi)$.
	\end{thm}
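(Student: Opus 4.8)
The statement is classical (see \cite[Ch.~5]{Stein_modular_forms} or \cite[\S5.1]{Hida_Eisenstein}), and the plan is to verify three assertions and combine them: \textup{(i)} every form $G_{k,\chi_1,\chi_2}(tz)$ in the displayed set --- where $\chi_1$ has conductor $N_1$, $\chi_2$ has conductor $N_2$, $\chi_2/\chi_1=\chi$, and $N_1N_2t\mid N$ --- lies in $\Ecal_k(\Gamma_1(N),\chi)$; \textup{(ii)} the forms in this set are linearly independent; \textup{(iii)} the cardinality of the set equals $\dim\Ecal_k(\Gamma_1(N),\chi)$. Granting all three, the decomposition \eqref{eqn:Tmod_decomp} forces the set to be a basis. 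I would first carry this out for $k\ge 3$, where the lattice sum defining $G_{k,\chi_1,\chi_2}$ converges absolutely, and then address the special features of $k=1$ and $k=2$.

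For \textup{(i)}, the first task is the transformation law: $G_{k,\chi_1,\chi_2}$ is a weight-$k$ modular form for $\Gamma_1(N_1N_2)$ whose nebentypus is $\chi_2/\chi_1=\chi$, holomorphic at every cusp. This follows by substituting $\left(\begin{smallmatrix}a&b\\c&d\end{smallmatrix}\right)\in\Gamma_0(N_1N_2)$ into the defining sum and reindexing, together with the Lipschitz/Fourier expansion of $\sum_n(mz+n)^{-k}$, which both yields the $q$-expansion (a twisted divisor sum) and exhibits holomorphy at the cusps. The operator $f(z)\mapsto f(tz)$ then carries $M_k(\Gamma_1(N_1N_2),\chi)$ into $M_k(\Gamma_1(N_1N_2t),\chi)\subseteq M_k(\Gamma_1(N),\chi)$, using $N_1N_2t\mid N$. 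To see $G_{k,\chi_1,\chi_2}$ is Eisenstein and not cuspidal, note that it is a $T_p$-eigenform for every $p\nmid N_1N_2$, with eigenvalue of the shape $\chi_1(p)+\chi_2(p)^{-1}p^{k-1}$ (equivalently, its Dirichlet series factors as a product of two Dirichlet $L$-functions); for $k\ge 2$ this eigenvalue has absolute value $\ge p^{k-1}-1$, contradicting the Ramanujan bound $|a_p|\le 2p^{(k-1)/2}$ satisfied by any $T_p$-eigenform in $\mathcal{S}_k(\Gamma_1(N))$, so the cuspidal projection of $G_{k,\chi_1,\chi_2}$ --- itself such an eigenform, with the same eigenvalues --- must vanish. (For $k=1$ one argues instead that the associated two-dimensional Galois representation is reducible, hence not cuspidal.) Since $f(z)\mapsto f(tz)$ is Hecke-equivariant away from the level and $\Ecal_k$ is Hecke-stable, each $G_{k,\chi_1,\chi_2}(tz)$ lands in $\Ecal_k(\Gamma_1(N),\chi)$.

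For \textup{(ii)}, I would use the same factorization: the Dirichlet series attached to $G_{k,\chi_1,\chi_2}(tz)$ is $t^{-s}L(s,\chi_1)L(s-k+1,\chi_2^{-1})$, and for a fixed ratio $\chi=\chi_2/\chi_1$ the functions obtained from distinct admissible triples $(\chi_1,\chi_2,t)$ are linearly independent --- the primitive characters visible in the Euler product and the integer $t$ (read off from the least index carrying a nonzero Dirichlet coefficient) recover the triple. Alternatively, and more in the style of \cite{Stein_modular_forms}, one computes the constant terms of the $G_{k,\chi_1,\chi_2}(tz)$ at the cusps of $\Gamma_1(N)$; these are Gauss-sum expressions, and after ordering cusps and forms compatibly with divisibility by $t$ the resulting evaluation matrix is block-triangular with nonzero (Gauss-sum) diagonal entries.

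For \textup{(iii)}, I would compare the two cardinalities. The number of admissible triples is $\sum_{t\mid N}\#\{(\chi_1,\chi_2):\chi_2/\chi_1=\chi,\ \mathrm{cond}(\chi_1)\,\mathrm{cond}(\chi_2)\mid N/t\}$; the inner count is a multiplicative function of $N/t$, evaluated prime-by-prime in terms of the conductor of $\chi$, and summing over $t$ reproduces the classical dimension formula for $\Ecal_k(\Gamma_1(N),\chi)$ (a weighted count of cusps). I expect this bookkeeping, and its interaction with the low-weight degeneracies, to be the main obstacle. For $k=2$ one must discard the non-holomorphic $G_{2,\mathbf{1},\mathbf{1}}$ and replace the family of triples with $\chi_1=\chi_2=\mathbf{1}$ by the holomorphic differences $G_{2,\mathbf{1},\mathbf{1}}(z)-t\,G_{2,\mathbf{1},\mathbf{1}}(tz)$, which costs one dimension --- matched by the correction in the formula for $\dim\Ecal_2(\Gamma_1(N),\mathbf{1})$. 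For $k=1$ the displayed set is over-complete: there are linear relations among the $G_{1,\chi_1,\chi_2}(tz)$ (roughly, the construction is two-to-one on pairs), so one selects a suitable subset of the stated size, reconciled with the smaller weight-one dimension formula. By contrast, steps \textup{(i)} and \textup{(ii)} are essentially formal once the transformation law and the $L$-function factorization are in hand; the content is concentrated in \textup{(iii)} and the edge cases.
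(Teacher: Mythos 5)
The paper gives no proof of this theorem: it is stated as a classical fact, following \cite[\S 5.1]{Hida_Eisenstein} and \cite[Chapter 5]{Stein_modular_forms}, so there is no internal argument to compare against, and your plan is the standard one from those references. Two remarks are nevertheless worth making. In step (i) you route Eisenstein-ness through the Ramanujan bound, which for $k\ge 2$ is Deligne's theorem and for $k=1$ is Deligne--Serre --- far heavier machinery than the statement requires --- and you should note that for $k=2$ the inequality $p^{k-1}-1>2p^{(k-1)/2}$ only holds once $p\ge 7$, so you need to choose a large prime $p\nmid N$. The references instead exhibit orthogonality to cusp forms directly (by unfolding the Petersson integral when $k\ge 3$) or argue from constant terms at cusps together with the Hecke-module splitting \eqref{eqn:Tmod_decomp} in low weight. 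Secondly, your observation that the displayed set is over-complete at $k=1$ (because of the symmetry relations among the $G_{1,\chi_1,\chi_2}$) and needs the holomorphic replacement at $k=2$ when $\chi_1=\chi_2$ are trivial is correct, and the paper's statement itself elides these cases; since the theorem is used downstream only to produce the integral generators $E_{k,\chi_1,\chi_2}$ and to identify $\Ecal_k(\Gamma_1(N),\chi)$ with the $\chi^{-1}$-eigenspace, this looseness is harmless there, but a self-contained proof must address it. The real content, as you acknowledge, is the dimension count in step (iii), which you describe accurately but do not carry out.
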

	\subsection{$\mu_N$-level structures}
	As we will be working integrally and $p$-adically at levels divisible by $p$, it is necessary to specify the meaning of $\Gamma_1(N)$-level structures.
	\begin{defn}
		A $\mu_N$-level structure on an elliptic curve $C$ is an embedding of group schemes $\eta\colon\mu_N\hookrightarrow C$. Denote by $\Mell(\mu_N)$ the moduli stack of elliptic curves with $\mu_N$-level structures. Let $R$ be a ring. The $R$ points of $\Mell(\mu_N)$ are 
		\begin{equation*}
		\Mell(\mu_N)(R)=\left\{(C/R,\eta)\left\mid \begin{array}{c}
		C\text{ is an elliptic curve over $R$ and }\\\eta\colon\mu_N\hookrightarrow C\text{ is an embedding of group schemes}
		\end{array}\right.\right\}.
		\end{equation*}
		Define the space of modular forms of weight $k$ and level $\mu_N$ by
		\begin{equation*}
		M_k(\mu_N)=H^0(\Mell(\mu_N),\bfo{k}),\quad M_k(\mu_N,\chi)=M_k(\mu_N)^{\chi^{-1}},
		\end{equation*}
		where $\chi$ is a Dirichlet character of conductor $N$.
	\end{defn}	
	\begin{lem}\label{lem:mu_N_Gamma_1_N}
		The stacks $M_k(\Gamma_1(N),\chi)$ and $M_k(\mu_N,\chi)$ are equivalent over $\Cbb$.
	\end{lem}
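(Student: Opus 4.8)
The plan is to realize both sides as the sections of a single line bundle on a single stack over $\Cbb$, after fixing a primitive $N$-th root of unity. Let $\Mell(\Gamma_1(N))$ be the moduli stack of pairs $(C/R,P)$ with $P\in C(R)$ a point of exact order $N$ — equivalently, an embedding $\underline{\Z/N}\hookrightarrow C$ — so that over $\Cbb$ the space $H^0(\Mell(\Gamma_1(N)),\bfo{k})$ (with the usual holomorphy condition at the cusps, imposed via the Tate curve as in \cite{padic}) is identified with the classical $M_k(\Gamma_1(N))$, compatibly with the $\znx$-action, by the standard algebraic–analytic comparison that sends $\bigl(C_\tau=\Cbb/(\Z\tau+\Z),\ P=\tfrac1N+(\Z\tau+\Z),\ \omega=2\pi i\,dz\bigr)$ to $f(\tau)$. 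First I would take $\zeta_N=e^{2\pi i/N}$, which gives an isomorphism of group schemes $\iota_{\zeta_N}\colon\underline{\Z/N}\simto\mu_N$ over $\Cbb$ with $1\mapsto\zeta_N$; precomposition with $\iota_{\zeta_N}^{-1}$ carries a point of exact order $N$ to an embedding $\mu_N\hookrightarrow C$, and since this is compatible with base change it defines an isomorphism of stacks over $\Cbb$
\[
\Mell(\Gamma_1(N))_{\Cbb}\;\simto\;\Mell(\mu_N)_{\Cbb},\qquad (C,P)\longmapsto\bigl(C,\ P\circ\iota_{\zeta_N}^{-1}\bigr).
\]
Pulling back $\bfo{k}$ along this isomorphism identifies the two weight-$k$ modular form spaces over $\Cbb$, and under the Tate uniformization $C_\tau\cong\Cx/q^{\Z}$ the canonical inclusion $\mu_N\subset\Cx/q^{\Z}$ is exactly $P\circ\iota_{\zeta_N}^{-1}$ for $P=\tfrac1N$, so the comparison matches the cusps, hence the holomorphy conditions, as well.

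Next I would match the two $\znx$-actions. On $\Mell(\Gamma_1(N))$ the diamond operator $\langle d\rangle$ precomposes $P$ with multiplication by $d$ on $\underline{\Z/N}$, while on $\Mell(\mu_N)$ the corresponding operator precomposes $\eta$ with the $d$-th power map $[d]_{\mu_N}$; since $\iota_{\zeta_N}$ intertwines multiplication by $d$ on $\underline{\Z/N}$ with $[d]_{\mu_N}$ on $\mu_N$, the displayed isomorphism is $\znx$-equivariant. Passing to $\chi^{-1}$-eigenspaces then gives
\[
M_k(\mu_N,\chi)=M_k(\mu_N)^{\chi^{-1}}\;=\;M_k(\Gamma_1(N))^{\chi^{-1}}=M_k(\Gamma_1(N),\chi)
\]
over $\Cbb$, which is the assertion. (Replacing $\zeta_N$ by $\zeta_N^{a}$ rescales the isomorphism of the full spaces by the $\znx$-action of $a$, hence changes the induced map on the $\chi^{-1}$-eigenspace only by the unit $\chi(a)^{\pm1}$, so the identification of eigenspaces is canonical up to a unit, and with the choice $\zeta_N=e^{2\pi i/N}$ it is literally the classical one.)

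I expect the $\znx$-equivariance step to be the fussiest point, though it is bookkeeping rather than a real obstacle: one must pin down the sign conventions for the diamond action on each of $\Mell(\Gamma_1(N))$ and $\Mell(\mu_N)$, following \cite{Hida_Eisenstein,Stein_modular_forms} and \cite{padic}, so that the two actions correspond under $\iota_{\zeta_N}$ on the nose rather than up to $d\mapsto d^{-1}$ — this is exactly why the normalization $\eta\mapsto\eta\circ[d]_{\mu_N}$ is used above. The remaining care goes into the cusp comparison, namely checking that $\zeta_N=e^{2\pi i/N}$ is the root of unity that sends the classical cusp $\infty$ with its $\tfrac1N$-structure to the canonical $\mu_N$-structure on the Tate curve, a standard computation. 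No deeper input — no Dieudonn\'e theory and no Igusa's theorem — enters here: the lemma ultimately records that $\mu_N$ and $\underline{\Z/N}$ become isomorphic over $\Cbb$.
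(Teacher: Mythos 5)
Your proof is correct and follows the same route as the paper's: the paper's one-sentence argument invokes exactly the isomorphism $\Mell(\Gamma_1(N))(R)\simeq\Mell(\mu_N)(R)$ when $R$ contains a primitive $N$-th root of unity, which is your map $(C,P)\mapsto(C,P\circ\iota_{\zeta_N}^{-1})$. You have simply spelled out the details — the choice of $\zeta_N$, the $\znx$-equivariance via diamond operators, and the cusp comparison — that the paper leaves implicit.
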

	\begin{proof}
		This is because $\Mell(\Gamma_1(N))(R)\simeq \Mell(\mu_N)(R)$ when $R$ contains a primitive $N$-th root of unity.
	\end{proof}
	\begin{prop}\label{prop:mu_N_rigid}
		When $N\ge 4$, $\Mell(\mu_N)$ is represented by a smooth affine curve over $\Z$.
	\end{prop}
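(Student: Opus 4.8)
The plan is to prove three things in turn --- rigidity of the $\mu_N$-moduli problem for $N\ge 4$ (so that $\Mell(\mu_N)$ is representable by a scheme), smoothness of relative dimension $1$ over $\Z$, and affineness. The structural point behind the first two is that demanding that $\eta$ be an \emph{embedding} makes the forgetful morphism $\Mell(\mu_N)\to\Mell$ finite \'etale onto an open substack. I would first record relative representability: for an elliptic curve $C$ over a ring $R$, the functor $R'\mapsto\mathrm{Hom}_{\mathrm{gp}}(\mu_{N,R'},C_{R'})=\mathrm{Hom}_{R'}(\mu_N,C[N])$ is represented --- via Cartier duality and the Weil-pairing autoduality $C[N]^\vee\cong C[N]$ --- by an affine $R$-scheme, and the subfunctor of embeddings (equivalently, of homomorphisms whose Cartier dual $C[N]\to\underline{\Z/N}$ is faithfully flat) is an open subscheme of it; letting $C$ vary over $\Mell$, this exhibits $\Mell(\mu_N)$ as an algebraic stack, quasi-affine over $\Mell$. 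Next, a monomorphism $\mu_p\hookrightarrow C$ forces every fibre of $C$ to be $p$-ordinary, since a supersingular $C[p]$ is local--local and receives no nonzero homomorphism from the multiplicative-type group scheme $\mu_p$. Hence the image $\mathcal U\subseteq\Mell$ of the forgetful map is open: all of $\Mell$ away from $N$, and over $\Zp$ for $p\mid N$ the complement of the finitely many supersingular points of characteristic $p$. Over $\mathcal U$ the sheaf of $\mu_N$-embeddings into $C$ is finite \'etale --- away from $N$ because $\mu_N$ and $C[N]$ are then finite \'etale, and over the $p$-ordinary locus at $p\mid N$ because a homomorphism $\mu_{p^v}\to C$ factors through the multiplicative-type part $\widehat{C}[p^v]$ of $C[p^v]$, and $\mathrm{Hom}$ between finite flat multiplicative-type group schemes is finite \'etale by Cartier duality. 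So $\Mell(\mu_N)\to\mathcal U$ is finite \'etale, and since $\Mell$ --- hence its open substack $\mathcal U$ --- is smooth over $\Z$ of relative dimension $1$, so is $\Mell(\mu_N)$.

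For rigidity, suppose $\alpha$ is an automorphism of an elliptic curve $C/R$ with $\alpha\circ\eta=\eta$; I want $\alpha=\mathrm{id}$, and it suffices to check this on geometric fibres. There $(\alpha-[1])\circ\eta=0$, so $\eta$ factors through the finite subgroup scheme $\ker(\alpha-[1])\subseteq C$; a short case analysis at $j=0,1728$ and in characteristics $2,3$ --- using that $\alpha$ generates a quadratic imaginary order, or the relevant maximal order in a quaternion algebra in characteristics $2,3$, so that $\deg(\alpha-[1])$ is a norm of the shape $\Phi_m(1)$ --- shows $|\ker(\alpha-[1])|\le 4$, with equality only for $\alpha=[-1]$, in which case $\ker(\alpha-[1])=C[2]$ and $\mu_N\hookrightarrow C[2]$ forces $N\mid 2$. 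For $N\ge 4$ only $\alpha=\mathrm{id}$ survives, so the moduli problem is rigid and $\Mell(\mu_N)$ is representable by a separated scheme (Katz--Mazur's representability theorem for rigid moduli problems of elliptic curves).

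For affineness I would pass to the compactification: $\Mell(\mu_N)$ is an open dense subscheme of a proper $\Z$-scheme $\overline{\Mell(\mu_N)}$, the moduli of generalised elliptic curves with a suitably ample $\mu_N$-structure (Deligne--Rapoport, Katz--Mazur), with complement the cuspidal divisor. The level-$1$ weight-$12$ form $\Delta$ pulls back to a section of $\bfomega^{\otimes 12}$ on $\overline{\Mell(\mu_N)}$ whose non-vanishing locus is exactly $\Mell(\mu_N)$; since $\bfomega$ is ample on $\overline{\Mell}$ and pulls back along the finite forgetful morphism to an ample line bundle, $\Mell(\mu_N)$ is the non-vanishing locus of a section of an ample line bundle on a proper $\Z$-scheme, hence affine over $\Z$. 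Combining the three points, $\Mell(\mu_N)$ is a smooth affine curve over $\Z$.

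I expect the genuine obstacle to be the integral input at primes dividing $N$: verifying carefully that the embedding moduli problem is finite \'etale (not merely quasi-finite) over the $p$-ordinary locus, and the existence and good properties over $\Z$ of the compactification $\overline{\Mell(\mu_N)}$ --- the content of Deligne--Rapoport and Katz--Mazur --- together with the automorphism bookkeeping in characteristics $2$ and $3$. Everything else is formal once these are in place.
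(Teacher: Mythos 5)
Your proposal is essentially a from-scratch reconstruction of the Katz--Mazur results the paper invokes in a single citation: the paper simply reduces, via \cite[Corollary 4.7.1]{amec}, to checking (a) the forgetful morphism is relatively representable, affine, and \'etale over $\Mell$ (which is \cite[\S 4.9--4.10]{amec}) and (b) rigidity for $N\ge 4$ (which is \cite[Corollary 2.7.4]{amec}). Your representability, smoothness, and rigidity arguments are the right ones and match those references in substance. Your affineness argument is a genuinely different route: Katz--Mazur's Corollary 4.7.1 delivers affineness as part of the representability machinery, while you instead pass to the Deligne--Rapoport/Conrad compactification and exhibit $\Mell(\mu_N)$ as the non-vanishing locus of $\Delta$, a section of the ample sheaf $\bfomega^{\otimes 12}$. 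That is a legitimate and perhaps more transparent proof, at the cost of invoking the proper compactification over $\Z$ with its ample $\bfomega$.

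There is, however, one genuine error in the middle: you assert that $\Mell(\mu_N)\to\mathcal{U}$ is \emph{finite} \'etale over the non-supersingular locus $\mathcal{U}$. It is not, for $p\mid N$. Over a geometric point in characteristic $0$, the number of embeddings $\mu_{p^v}\hookrightarrow C[p^v]\cong(\Z/p^v)^2$ is $p^{2v-2}(p^2-1)$, while over an ordinary geometric point in characteristic $p$ the embeddings must factor through the connected multiplicative part $\wh{C}[p^v]$, giving only $|\mathrm{Aut}(\mu_{p^v})|=p^{v-1}(p-1)$ of them. Since the fiber rank drops under specialization, the morphism is quasi-finite and affine \'etale but not finite over $\mathcal{U}$ (this is exactly why \cite[\S 4.9--4.10]{amec} assert only ``affine and \'etale'' and not ``finite''). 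Fortunately this misstatement is not load-bearing in your argument: smoothness requires only \'etaleness over the smooth stack $\Mell$, and your affineness argument uses the compactified map $\overline{\Mell(\mu_N)}\to\overline{\Mell}$, which \emph{is} finite (proper plus quasi-finite plus separated), even though it fails to be finite flat, and finite pullback of an ample sheaf is ample whether or not the map is flat. So the conclusion stands, but you should correct ``finite \'etale'' to ``affine \'etale'' on the open moduli.
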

	\begin{proof}
		By \cite[Corollary 4.7.1]{amec}, it suffices to show: 
		\begin{enumerate}
			\item The forgetful map $\Mell(\mu_N)\to \Mell$ is relatively representable, affine, and \'{e}tale.
			\item $\Mell(\mu_N)$ is rigid, meaning that there is no non-trivial automorphism of the pair $(C,\eta\colon\mu_N\hookrightarrow C)$.
		\end{enumerate} 
		(1) is proved in \cite[Section 4.9, 4.10]{amec}. (2) is proved in the \cite[Corollary 2.7.4]{amec} when $N\ge 4$.
	\end{proof}	
	
	\subsection{The $q$-expansion principle}
	Let $\Mell(\Gamma)_R$ be moduli stack of generalized elliptic curves over $R$-schemes with $\Gamma$-level structures. 
	\begin{defn}
		A \textbf{cusp} in $\Mell(\Gamma)_R$ is an embedding $\spf R\llb q\rrb\to\Mell(\Gamma)_R$ that classifies a $\Gamma$-level structure on the Tate curve $T(q)$. The \textbf{$q$-expansion} of a modular form $f\in H^0(\Mell(\Gamma)_R,\bfo{k})$ at a cusp is its image under restriction map to the said cusp.
	\end{defn}
	\begin{prop}[The $q$-expansion principle,{ \cite[Theorem 1.6.1]{padic}}]\label{prop:q-exp}
		Let $f$ be a modular form of weight $k$, level $\Gamma$, and coefficients in $R$. It is zero iff its restrictions to all cusps are zero. Furthermore, when the stack $\Mell(\Gamma)_R$ is irreducible, the restriction map to \emph{any} cusp is injective.
	\end{prop}
	It follows that congruences of modular forms are determined by their $q$-expansions at any cusp when $\Mell(\Gamma)_R$ is irreducible. By \cite[Theorem 1.2.1]{Conrad_arithmetic_moduli}, this is indeed the case when $\Gamma=\Gamma_1(N)$ and $R=\Z$.
	
	Now normalize $E_{k,\chi_1,\chi_2}$ so that its coefficients are algebraic integers.
	\begin{defn}[Normalization of $G_{k,\chi_1,\chi_2}$]\label{defn:E_k_normalization} When $\chi_2$ is non-trivial, we define normalized Eisenstein series:
		\begin{equation*}
		E_{k,\chi_1,\chi_2}(q)=\sum_{n\ge 1}\left(\sum_{0<d\mid n}\chi_2(d)\chi_1(n/d)d^{k-1}\right)q^n.
		\end{equation*}
		When $\chi_1$ is the trivial character $\chi^0$ and $\chi_2=\chi$, we define $E_{k,\chi}$ and $E_{k,\chi^0,\chi}$ by 
		\begin{align*}
		E_{k,\chi}(q)&=1-\frac{2k}{B_{k,\chi}}\sum_{n\ge 1}\left(\sum_{0<d\mid n}\chi(d)d^{k-1}\right)q^n\\		
		E_{k,\chi^0,\chi}(q)=c\cdot E_{k,\chi}(q)&=c_0+c_1\sum_{n\ge 1}\left(\sum_{0<d\mid n}\chi(d)d^{k-1}\right)q^n,\quad c_0, c_1\in \Z[\chi] \text{ are coprime and } c_0/c_1=-\frac{B_{k,\chi}}{2k}.
		\end{align*}
	\end{defn}
	\begin{rem}
		As $\Z[\chi]$ has non-trivial unit group, the constant $c$ is not unique in general. 
	\end{rem}
	\begin{prop}
		$E_{k,\chi_1,\chi_2}(q)\in (H^0(\Mell(\mu_N),\bfo{k})\otimes_\Z \Z[\chi_1,\chi_2])^{\chi_1/\chi_2}$.
	\end{prop}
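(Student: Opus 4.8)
The plan is to deduce the proposition from the classical theory of Eisenstein series together with the $q$-expansion principle (\Cref{prop:q-exp}): first establish the statement over $\Cbb$, then descend the coefficients to $A:=\Z[\chi_1,\chi_2]$. Throughout I take $N=N_1N_2$; the case of a general level $N$ with $N_1N_2\mid N$ follows by pulling the section back along the forgetful map $\Mell(\mu_N)\to\Mell(\mu_{N_1N_2})$ (restricting the $\mu_N$-structure to $\mu_{N_1N_2}$). There are two things to check: that $E_{k,\chi_1,\chi_2}$ lies in $H^0(\Mell(\mu_N),\bfo{k})\otimes_\Z A$, and that it lies in the $\chi_1/\chi_2$-eigenspace for the $\znx$-action.

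First I would treat the statement over $\Cbb$. By \Cref{lem:mu_N_Gamma_1_N} (and GAGA), $H^0(\Mell(\mu_N)_\Cbb,\bfo{k})$ is the space of holomorphic modular forms of weight $k$ and level $\Gamma_1(N)$, and under this identification the $\znx$-action that precomposes the $\mu_N$-structure with multiplication is the action of the diamond operators, so that the $\chi_1/\chi_2$-eigenspace is $M_k(\Gamma_1(N),\chi_2/\chi_1)$. By \Cref{thm:eisenstein_subspace_basis} (with $t=1$) the series $G_{k,\chi_1,\chi_2}$ lies in this eigenspace, and the classical theory of Eisenstein series (e.g.\ \cite{Hida_Eisenstein,Stein_modular_forms}) identifies $E_{k,\chi_1,\chi_2}$ of \Cref{defn:E_k_normalization} with a nonzero $\Q(\chi_1,\chi_2)$-multiple of $G_{k,\chi_1,\chi_2}$ whose analytic Fourier expansion at $\infty$ is the series written there. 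Since the algebraic $q$-expansion of a section of $\bfo{k}$ on $\Mell(\mu_N)$ at the cusp given by the Tate curve with its canonical structure $\mu_N\hookrightarrow\Gmh\hookrightarrow T(q)$ agrees with its analytic expansion at $\infty$ (standard; cf.\ \cite{padic}), $E_{k,\chi_1,\chi_2}$ defines a section over $\Cbb$ that lies in the $\chi_1/\chi_2$-eigenspace and whose $q$-expansion at this cusp has all coefficients in $A$. The standard rationality of modular forms — the $\Cbb$-form lies in the $\Q(\chi_1,\chi_2)$-span of a $\Q$-rational basis, whose coordinates are pinned down by the $q$-expansion — then places it in $H^0(\Mell(\mu_N)_{\Q(\chi_1,\chi_2)},\bfo{k})$.

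The crux is the descent from $\Q(\chi_1,\chi_2)$ to $A$: I would prove that any section $f$ of $\bfo{k}$ over $\Mell(\mu_N)_{\Q(\chi_1,\chi_2)}$ whose $q$-expansion at the chosen cusp lies in $A\llb q\rrb$ already lies in $M:=H^0(\Mell(\mu_N),\bfo{k})\otimes_\Z A$. Assume $N\ge 4$, so that by \Cref{prop:mu_N_rigid} $\Mell(\mu_N)$ is a smooth affine curve over $\Z$; then $M=H^0(\Mell(\mu_N)_A,\bfo{k})$ is a flat $A$-module whose formation commutes with base change, and the $q$-expansion map $M\to A\llb q\rrb$ is injective, as is its reduction modulo every prime $\mfrak\trianglelefteq A$. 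The injectivity mod $\mfrak$ is \Cref{prop:q-exp} applied to the fibers, and uses that all fibers of $\Mell(\mu_N)$ over $\Z$ are connected — the $\mu_N$-analogue of the fact recorded after \Cref{prop:q-exp} for $\Gamma_1(N)$-structures. Granting this, I would choose an $A$-basis of $M$, expand $f$ in it, and argue one prime $\mfrak$ at a time: a $\mfrak$-nonintegral coordinate would force a suitable multiple of $f$ to reduce to a nonzero element of $M/\mfrak M$ with vanishing $q$-expansion mod $\mfrak$, contradicting injectivity. Hence $f\in M$; applied to $E_{k,\chi_1,\chi_2}$ this gives $E_{k,\chi_1,\chi_2}\in M$. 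Since $\Cbb$ is flat over $A$, the eigenspace formation commutes with this base change, so the membership in the $\chi_1/\chi_2$-eigenspace established over $\Cbb$ descends to $M^{\chi_1/\chi_2}=(H^0(\Mell(\mu_N),\bfo{k})\otimes_\Z A)^{\chi_1/\chi_2}$. The remaining cases $N\le 3$ (where $\mu_N$-structures are not yet rigid; here the only nontrivial character occurring has conductor $3$) are handled by the same argument after rigidifying with an auxiliary prime-to-$N$ level and descending along the finite \'etale cover, or directly since the levels are so small.

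The step I expect to be the main obstacle is precisely the descent in the previous paragraph — more specifically, the input that the $q$-expansion principle over $A$ retains injectivity modulo every prime, i.e.\ that all the fibers of $\Mell(\mu_N)$ over $\Z$ (including at primes dividing $N$, where $\mu_N$ is infinitesimal rather than \'etale) are connected. This is the genuinely non-formal ingredient, analogous to Igusa-type irreducibility statements; once it and the affineness of \Cref{prop:mu_N_rigid} are available, the rest — reconciling the $G_{k,\chi_1,\chi_2}$-normalization with the coefficient-integral $E_{k,\chi_1,\chi_2}$ of \Cref{defn:E_k_normalization}, matching the $\mu_N$-picture with the $\Gamma_1(N)$-picture and the $\znx$-action with diamond operators, and the flatness and base-change bookkeeping — is routine.
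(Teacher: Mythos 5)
Your proposal is correct and follows essentially the same route as the paper: membership over $\Cbb$ via the identification of $\mu_N$- and $\Gamma_1(N)$-level structures and the eigenspace description, followed by a descent to $\Z[\chi_1,\chi_2]$ powered by the $q$-expansion principle, with the connectivity of the fibers of $\Mell(\mu_N)$ over $\Z$ (recorded after \Cref{prop:q-exp} via \cite[Theorem 1.2.1]{Conrad_arithmetic_moduli}) and the affineness from \Cref{prop:mu_N_rigid} as the non-formal inputs. You route the descent through the intermediate field $\Q(\chi_1,\chi_2)$ and spell out the prime-by-prime integrality and base-change bookkeeping where the paper is terser, but the underlying mechanism is identical; your handling of the small-level cases by auxiliary rigidification is a valid alternative to the paper's shortcut of observing that for conductor $3$ one has $\Z[\chi_1,\chi_2]=\Z$, so the coefficient base change is trivial.
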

	\begin{proof}
		By \Cref{lem:mu_N_Gamma_1_N}, $E_{k,\chi_1,\chi_2}\in M_k(\mu_N)$. It is in the $\chi_1/\chi_2$-eigensubspace by \Cref{thm:eisenstein_subspace_basis}. As the coefficients of $E_{k,\chi_1,\chi_2}(q)$ are all in $\Z[\chi_1,\chi_2]$ by \Cref{defn:E_k_normalization}, the $q$-expansion principle \Cref{prop:q-exp} implies that 
		\begin{equation*}
		E_{k,\chi_1,\chi_2}\in H^0(\Mell(\mu_N)\x_{\Z} \spec \Z[\chi_1,\chi_2],\bfo{k}).
		\end{equation*}
		When the conductors of $\chi_1$ and $\chi_2$ are $3$, their images are $\{\pm 1\}$ and $\Z[\chi_1,\chi_2]=\Z$. When the conductors of $\chi_1$ and $\chi_2$ are at least $4$, the claim follows from \Cref{prop:mu_N_rigid}.
	\end{proof}
	\subsection{$p$-adic modulis}
	We will study congruences of modular forms in  $M_k(\mu_N,\chi)$ completed at a prime $p$.
	\begin{defn}
		An elliptic curve $C$ over a $p$-complete ring is called ($p$-)\textbf{ordinary} if it has nodal singularity, or its reduction mod $p$ is ordinary, i.e. the formal group $\widehat{C}$ associated to $C$ has height $1$ reduction mod $p$.
		
		Denote the $p$-completed moduli stack of $p$-ordinary elliptic curve by $\Mello$. This is an open substack of $\Mell$, since it is the non-vanishing locus of the Hasse invariant.
	\end{defn}
	
	Restricted to $\Mello$, the $\mu_{p^{v}}$-level structures on an elliptic curve $C$ are identified with the corresponding level structures on the height $1$ formal group $\widehat{C}$. As formal groups of height $1$ are \'{e}tale locally isomorphic to $\Gmh$, the multiplicative formal group, there is a tower of stacks:
	\begin{equation*}
	\begin{tikzcd}
	\Mell^{triv}\rar&\cdots\rar& \Mello(p^2)\rar&\Mello(p)\rar&\Mello,
	\end{tikzcd}
	\end{equation*}
	where $\Mello(p^v)$ and $\Mell^{triv}$ are the moduli stacks with $R$-points $(C,\eta\colon \mu_{p^v}\simto \widehat{C}[p^v])$ and $(C,\eta\colon\Gmh\simto \widehat{C})$ respectively. The forgetful map $\Mello(p^v)\to \Mello$ is a $\zx{p^v}$-torsor and $\Mell^{triv}\to \Mello$ is a $\Zpx$-torsor. There is a pullback diagram of towers of stacks:
	\begin{equation}\label{igusa_tower}
	\begin{tikzcd}
	\Mell^{triv}\rar\dar\arrow[dr, phantom, "\lrcorner", very near start]&\cdots\dar\rar\arrow[dr, phantom, "\lrcorner",  near start]& \Mello(p^2)\arrow[dr, phantom, "\lrcorner", very near start]\rar\dar&\Mello(p)\arrow[dr, phantom, "\lrcorner", very near start]\rar\dar&\Mello\dar\\
	\spf \Zp\rar&\cdots\rar& B(1+p^2\Zp)\rar&B(1+p\Zp)\rar&B\Zpx
	\end{tikzcd}
	\end{equation}
	
	\begin{prop}\textup{\cite{padicL,ctmf}}\label{prop:mello_p_affine}
		When $p>2$ or $p=2$ and $v>1$, $\Mello(p^v)$ and $\Mell^\text{triv}$ are affine formal schemes. In particular, $\Mell^{triv}\simeq \spf D_p$ where $D_p$ is the ring of divided congruences of $p$-adic modular forms. 
	\end{prop}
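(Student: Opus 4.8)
The plan is to establish rigidity of the stacks $\Mello(p^v)$ for $p^v>2$, which promotes them to ($p$-adic formal) schemes; to deduce affineness from that of the Igusa curve over $\mathbb{F}_p$ and pass to the limit over $v$; and finally to identify the resulting ring of functions on $\Mell^{triv}$ with Katz's ring of divided congruences. \emph{Rigidity.} Over $\Mello$ the formal group $\widehat C$ has height $1$, so $\widehat C[p^v]$ is \'etale-locally isomorphic to $\mu_{p^v}$ and $\Mello(p^v)\to\Mello$ is a finite \'etale $\zx{p^v}$-torsor; in particular $\Mello(p^v)$ is an algebraic stack, $p$-adically complete and smooth of relative dimension $1$ over $\Zp$, with special fiber a smooth curve over $\mathbb{F}_p$. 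A nontrivial automorphism $\alpha$ of an ordinary (generalized) elliptic curve over a connected base restricts to an automorphism of the height-$1$ formal group $\widehat C$, hence corresponds to a finite-order element $\zeta\in\Zpx$; since $\alpha$ acts faithfully on the invariant differential, $\zeta\ne 1$. If moreover $\alpha$ fixed a trivialization $\eta:\mu_{p^v}\simto\widehat C[p^v]$, then $\zeta$ would reduce to $1$ in $\zx{p^v}$, i.e.\ $\zeta\in 1+p^v\Zp$; but $1+p^v\Zp$ is torsion-free exactly when $p^v>2$. Hence, when $p>2$, or $p=2$ and $v>1$, the pair $(C,\eta)$ has no nontrivial automorphism, so $\Mello(p^v)$ is a rigid Deligne--Mumford stack, hence an algebraic space.

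\emph{Affineness.} The special fiber $\mathrm{Ig}(p^v):=\Mello(p^v)\otimes\mathbb{F}_p$ is the Igusa curve, a smooth curve over $\mathbb{F}_p$ that is finite over the complement $\{A\ne 0\}$ of the supersingular locus in the $j$-line $\mathbb{P}^1_j$, where $A$ is the Hasse invariant. The supersingular locus is a nonempty finite set of closed points not containing the cusp, so $\{A\ne 0\}$ is a proper curve with a nonempty finite set of points removed, hence affine; therefore $\mathrm{Ig}(p^v)$, being a scheme finite over an affine scheme, is affine --- this is Igusa's theorem, as in \cite{padicL}. Since affineness and schematicness are insensitive to nilpotent immersions, each $\Mello(p^v)\otimes\Z/p^n$ is an affine scheme, so $\Mello(p^v)=\varinjlim_n\Mello(p^v)\otimes\Z/p^n$ is an affine formal scheme, say $\spf R_v$ with $R_v$ a $p$-complete ring. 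Taking the limit over $v$ in the (cofinal) range where $\Mello(p^v)$ is affine, $\Mell^{triv}=\varprojlim_v\Mello(p^v)$ is the affine formal scheme $\spf R_\infty$, with $R_\infty$ the $p$-completed colimit of the $R_v$.

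\emph{Identification with $D_p$, and the main obstacle.} A function on $\Mell^{triv}$ is precisely a compatible assignment, to each ordinary elliptic curve $C/R$ with a trivialization $\eta:\Gmh\simto\widehat C$, of an element of $R$; evaluating a classical modular form of weight $k$ over $\Zp$ against the trivialization of invariant differentials induced by $\eta$ realizes every such form as a function on $\Mell^{triv}$, and the $q$-expansion principle \Cref{prop:q-exp} --- using that $\Mell^{triv}$ is connected, by Igusa's irreducibility theorem --- embeds $R_\infty$ into $\Zp\llb q\rrb$. The remaining point, that this image is \emph{all} of Katz's ring $D_p$ of divided congruences of $p$-adic modular forms, is the one genuinely non-formal input: it is the substance of \cite{padicL} (see also \cite{ctmf}) and rests on Katz's analysis of congruences among $q$-expansions of modular forms of varying weight, rather than on the formal arguments above. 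I expect this identification to be the main obstacle.
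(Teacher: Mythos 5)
The paper gives no proof of this proposition, citing \cite{padicL,ctmf}, so there is nothing in-text to compare against; your sketch correctly reconstructs the standard argument underlying those references. The rigidity step---that a nontrivial automorphism $\alpha$ acts on the height-$1$ formal group through some root of unity $\zeta\in\Zpx$, that $\zeta\neq 1$ because the action of $\aut(C)$ on $\bfomega$ is faithful for ordinary curves, and that preserving $\eta:\mu_{p^v}\simto\wh{C}[p^v]$ forces $\zeta\in 1+p^v\Zp$, which is torsion-free exactly when $p^v>2$---matches \cite[Corollaries 2.7.3--2.7.4]{amec}. The affineness step via the mod-$p$ Igusa curve (a proper smooth curve with the nonempty finite supersingular locus removed, hence affine; then lift through nilpotent thickenings and pass to the limit over $v$) is Igusa's theorem and is carried out in \cite{padicL}. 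You also rightly isolate the identification of $R_\infty$ with Katz's ring $D_p$ of divided congruences as the genuinely nonformal input: the $q$-expansion embedding is formal, but pinning down its image as $D_p$ is the content of Katz's analysis and not a consequence of the preceding geometry. The sketch is correct and appropriately honest about where the real work lives.
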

	
	The strategy now is to relate congruences of $E_{k,\chi}$ to finite subgroups of the formal groups and formal $A$-modules associated to $p$-ordinary elliptic curves. Below are some facts about needed in the study of formal group of a $p$-ordinary elliptic curve. 
	\begin{prop}\label{prop:can_subgp}
		Let $C$ be a $p$-ordinary elliptic curve over a $\Zp$-algebra. Denote its formal group by $\widehat{C}$.
		\begin{enumerate}
			\item $C$ has a canonical subgroup $H$ of order $p$, where $H=\widehat{C}[p]$.
			\item The quotient map $\varphi\colon  C\mapsto C/H$ is the relative Frobenius map on $\Mello$.
			\item Let $f(q)$ be the $q$-expansion of a modular form over $\Mello$, then $\varphi^*f(q)=f(q^p)$.
			\item There is an isomorphism of invertible sheaves $F\colon \varphi^*\bfomega\simto \bfomega$ over $\Mello$, where $\bfomega$ is the sheaf of invariant differentials of $C$.
		\end{enumerate} 
	\end{prop}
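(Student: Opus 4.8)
The plan is to produce the canonical subgroup $H$ directly from the formal group, to define $\varphi$ as the induced endomorphism of $\Mello$, and then to prove (2)--(4) by reducing modulo $p$---where $\varphi$ becomes the relative Frobenius, $H$ the kernel of Frobenius, and a certain dual isogeny the Verschiebung---together with one explicit computation at a cusp.

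For part (1): since $C$ is $p$-ordinary, $\widehat{C}$ is a one-dimensional formal group over the $p$-complete base $R$ whose reduction modulo $p$ has height $1$, so $[p]\colon\widehat{C}\to\widehat{C}$ is an isogeny of degree $p$; hence $H:=\widehat{C}[p]=\ker[p]$ is finite locally free of rank $p$, and it is a subgroup scheme of $C$ because $\widehat{C}$ is the completion of $C$ along its identity section. Modulo $p$, $H$ is the kernel of the relative Frobenius of $C$ (the infinitesimal part of $C[p]$ for the ordinary curve $C$ mod $p$), and this kernel-of-Frobenius description is functorial in $(C/R)$, which is what ``canonical'' means. I would cite \cite{padic} or \cite{amec} for the standard verifications.

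For part (2): by (1) the assignment $C\mapsto C/H$ is functorial, hence descends to an endomorphism $\varphi$ of the stack $\Mello$, and it suffices to match its reduction modulo $p$ with the relative Frobenius. Over $\mathbb{F}_p$ the quotient isogeny $\lambda\colon C\to C/H$ is the relative Frobenius $C\to C^{(p)}$ (the Frobenius twist), and since $\mathbb{F}_p$ is the prime field $C^{(p)}$ is $C$ base-changed along the absolute Frobenius of the base; the universal property of $\Mello$ modulo $p$ then forces $\varphi$ modulo $p$ to be its Frobenius. For part (4): let $\lambda\colon\mathcal{C}\to\mathcal{C}/\mathcal{H}\cong\varphi^*\mathcal{C}$ be the universal quotient isogeny, $\lambda^\vee\colon\varphi^*\mathcal{C}\to\mathcal{C}$ its dual, and $F:=(\lambda^\vee)^*\colon\bfomega\to\varphi^*\bfomega$ the pullback of invariant differentials. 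As a morphism of invertible sheaves on the $p$-complete stack $\Mello$ it is an isomorphism iff it is one modulo $p$; and modulo $p$, $\lambda^\vee$ is the Verschiebung of the universal ordinary curve, a separable (hence \'etale) degree-$p$ isogeny, so $(\lambda^\vee)^*$ is an isomorphism on invariant differentials. (Concretely $\varphi^*\bfomega\cong\bfomega^{\otimes p}$ modulo $p$, and $F$ reduces to the trivialization of $\bfomega^{\otimes(p-1)}$ by the Hasse invariant.)

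For part (3): restrict to a cusp $\spf R\llb q\rrb\to\Mello$ classifying the Tate curve $T(q)=\mathbb{G}_m/q^{\Z}$, whose formal group is $\Gmh$ with canonical differential $\omega_{\mathrm{can}}=dt/t$, so $H=\mu_p\subseteq T(q)$. Identifying $\mathbb{G}_m/\mu_p$ with $\mathbb{G}_m$ via the $p$-th power map identifies $T(q)/\mu_p$ with $T(q^p)$, so $\varphi$ composed with this cusp is the cusp composed with $q\mapsto q^p$, while the corresponding dual isogeny $T(q^p)\to T(q)$ is induced by the identity of $\mathbb{G}_m$ and hence sends $\omega_{\mathrm{can}}$ to $\omega_{\mathrm{can}}$. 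Transporting $\varphi^*f$ back along $F$ therefore turns $f(q)\,\omega_{\mathrm{can}}^{\otimes k}$ into $f(q^p)\,\omega_{\mathrm{can}}^{\otimes k}$, i.e. $\varphi^*f(q)=f(q^p)$. The main obstacle is the precision needed in (1)--(2): that $H$ is a finite flat subgroup scheme over an arbitrary $p$-complete base, that $C\mapsto C/H$ descends to a morphism of the moduli \emph{stack}, and that its reduction is exactly the relative Frobenius via the connected--\'etale sequence; granting that, (3) and (4) are a short computation with isogenies and the Tate curve.
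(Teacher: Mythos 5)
The paper itself offers no proof of this proposition: it is introduced with the preamble ``Below are some facts needed in the study of formal group of a $p$-ordinary elliptic curve'' and treated as a package of standard results from the theory of canonical subgroups and $p$-adic modular forms, implicitly referred back to \cite{padic} (where these are worked out for the ordinary locus). So there is no in-paper argument to compare yours against.

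That said, your sketch is a correct account of the standard argument and catches the genuine subtleties. In particular: you correctly note that for (1) one needs $\widehat{C}[p]$ to be finite locally free of rank $p$ over an arbitrary $p$-complete base (height $1$ makes $[p]$ an isogeny of that degree), and that functoriality is most transparently seen modulo $p$ via the kernel-of-Frobenius description; for (2) and (4), the reduction-mod-$p$ comparison (Frobenius on one side, Verschiebung on the other, with separability of Verschiebung on the ordinary locus giving the isomorphism $F$) is exactly the right mechanism, and the remark that $\varphi^*\bfomega\cong\bfomega^{\otimes p}$ mod $p$ with $F$ encoding the Hasse invariant is the correct concrete picture. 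Your Tate-curve computation for (3) is also right, including the point (easy to get wrong) that the dual isogeny $T(q^p)\to T(q)$ is induced by the identity on $\Gmh$, so it fixes $\omega_{\mathrm{can}}$, and therefore the identification $\varphi^*f(q)=f(q^p)$ is only meaningful after transporting along $F$---which is precisely why statement (4) must accompany (3). If you were to write this up in full, the one place deserving more care is the descent of $C\mapsto C/H$ to an endomorphism of the stack $\Mello$ (i.e.\ that it is compatible with pullbacks and isomorphisms of pairs), but as you flag this yourself and it is routine, the outline stands.
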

	We conclude by comparing the integral and $p$-adic moduli problems.
	\begin{lem}\label{lem:mu_N_p_ord}
		If an elliptic curve $C$ admits a $\mu_N$-level structure, then it is $p$-ordinary for all primes $p\mid N$.
	\end{lem}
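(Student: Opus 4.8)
The plan is to reduce the statement, one prime $p\mid N$ at a time, to a pointwise assertion over algebraically closed fields of characteristic $p$, and then to rule out the supersingular case by Cartier duality. Being $p$-ordinary depends only on the reduction $C\otimes_R R/pR$, and (using the description of $\Mello$ as the non-vanishing locus of the Hasse invariant inside $\Mell$) it is equivalent to asking that every geometric fibre of $C\otimes_R R/pR$ be ordinary, since the height of a one-dimensional formal group over a field of characteristic $p$ is $1$ or $2$ and the supersingular locus is closed. So it suffices to fix a geometric point $\spec k\to\spec(R/pR)$ with $k$ algebraically closed of characteristic $p$ and show $C_k$ is ordinary. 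Since $p\mid N$, $\mu_p=\mu_N[p]$ is a closed subgroup scheme of $\mu_N$ (Cartier-dual to the reduction $\underline{\Z/N}\twoheadrightarrow\underline{\Z/p}$), so composing with $\eta$ and base-changing to $k$ yields a closed immersion of finite group schemes $\mu_p\hookrightarrow C_k$, which factors through $C_k[p]$ as $\mu_p$ is $p$-torsion.

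It then remains to prove the classical fact that an elliptic curve $C_k$ over an algebraically closed field $k$ of characteristic $p$ admitting a closed immersion $\mu_p\hookrightarrow C_k[p]$ must be ordinary. Suppose not, so $C_k$ is supersingular and $C_k[p]$ is connected, being the $p$-torsion of the height-$2$ formal group $\widehat{C_k}$. Cartier-dualizing $\mu_p\hookrightarrow C_k[p]$ gives an epimorphism of finite group schemes $C_k[p]^\vee\twoheadrightarrow\mu_p^\vee\cong\underline{\Z/p}$ onto a non-trivial \'etale group scheme. But the Weil pairing identifies $C_k[p]^\vee$ with $C_k[p]$, which is connected, and a finite connected group scheme over a field admits no non-zero homomorphism to an \'etale group scheme (its image would be simultaneously connected and \'etale, hence trivial); this contradicts surjectivity. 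Hence $C_k$ is ordinary.

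Assembling these, for each prime $p\mid N$ every geometric fibre of $C\otimes_R R/pR$ is ordinary, i.e.\ $C$ is $p$-ordinary, which proves the lemma. The only step with real content is the pointwise one, and even there the argument is just the standard characterization of ordinarity (presence of a multiplicative-type subgroup scheme of order $p$) repackaged through Cartier duality and the autoduality of $C_k[p]$; I expect no genuine obstacle beyond being careful that ``embedding of group schemes'' is a closed immersion, so that it is stable under base change and dualizes to an epimorphism.
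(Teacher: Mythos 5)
Your proof is correct but proceeds by a genuinely different route from the paper's. Both arguments reduce to the key claim that a supersingular elliptic curve over an algebraically closed field of characteristic $p$ cannot admit a closed immersion $\mu_p\hookrightarrow C[p]$. The paper establishes this by invoking Dieudonn\'e theory of finite group schemes: the only order-$p$ subgroup scheme of a height-$2$ formal group is (\'etale locally) $\alpha_p$, which is not $\mu_p$. You instead Cartier-dualize the closed immersion to get an epimorphism $C[p]^\vee\twoheadrightarrow\mu_p^\vee\cong\underline{\Z/p}$, use the Weil-pairing autoduality $C[p]^\vee\cong C[p]$ to see the source is connected in the supersingular case, and derive a contradiction from the fact that a connected finite group scheme has no nonzero homomorphism to an \'etale one. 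Your version is arguably more elementary and self-contained (Cartier duality and the Weil pairing rather than the classification of Dieudonn\'e modules of subgroup schemes), while the paper's phrasing fits the Dieudonn\'e-theoretic framework used throughout. You are also more careful about the reduction step, explicitly passing to geometric fibres and justifying that $p$-ordinarity is checked there; the paper leaves this implicit. One small thing worth keeping in mind if you write this up formally: the duality step uses that the category of finite flat commutative group schemes over a field is abelian, so that a closed immersion dualizes to a faithfully flat surjection; you flagged this correctly at the end.
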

	\begin{proof}
		As $\mu_p$ is a subgroup scheme of $\mu_N$ when $p\mid N$, it suffices to prove the case when $N=p$. Notice $\mu_p$ is $p$-torsion, any embedding of $\mu_p$ into an elliptic curve $C$ must factor through $C[p]$. When $C$ is $p$-supersingular, $C[p]=\wh{C}[p]$. Thus it reduces to showing that there is no embedding of $\mu_p$ into a height $2$ formal group. 
		
		Using Dieudonn\'e theory of finite groups schemes, we can show the only finite subgroup scheme of rank $p$ in a height $2$ formal group is \'etale locally isomorphic to $\alpha_p$, which is not \'etale locally isomorphic to $\mu_p$.
	\end{proof}
	\begin{defn}
		Let $\Mello(p^v,\Gamma_1(N'))$ be the stack whose $R$-points are
		\begin{equation*}
		\Mello(p^v,\Gamma_1(N'))(R)=\left\{(C/R,\eta_p,\eta')\left\mid \begin{array}{c}
		C\text{ is a $p$-ordinary elliptic curve over }R,\\\eta_p\colon \mu_{p^{v}}\simto \wh{C}[p^v],\quad \eta'\colon \Z/N'\hookrightarrow C[N]
		\end{array}
		\right. \right\}.
		\end{equation*}
	\end{defn}
	\begin{prop}\label{prop:mu_N_p-completion}
		Write $N=p^v\cdot N'$, where $p\nmid N'$. Then we have
		\begin{equation*}
		(\Mell(\mu_N))^\wedge_{p}\simeq \left\{\begin{array}{cl}
		\Mello(p^v,\Gamma_1(N')),& \textup{if } p\mid N;\\
		(\Mell)^\wedge_{p}(\Gamma_1(N)),& \textup{if } p\nmid N.
		\end{array}\right.
		\end{equation*}
	\end{prop}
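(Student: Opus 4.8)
The plan is to decompose the $\mu_N$-level structure along the coprime factorization $N=p^vN'$ and to analyze the $p$-primary and prime-to-$p$ parts separately after $p$-completion, the main inputs being \Cref{lem:mu_N_p_ord} and \Cref{prop:can_subgp}.

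First I would observe that $\gcd(p^v,N')=1$ gives a canonical splitting $\mu_N\simeq\mu_{p^v}\x\mu_{N'}$ of finite flat group schemes over $\Z$. Since the images of the two factors in any elliptic curve are killed by coprime integers, an embedding $\eta:\mu_N\hookrightarrow C$ is the same datum as a pair of embeddings $\mu_{p^v}\hookrightarrow C$ and $\mu_{N'}\hookrightarrow C$, functorially in $C$ and in the base; equivalently $\Mell(\mu_N)\simeq\Mell(\mu_{p^v})\x_{\Mell}\Mell(\mu_{N'})$. I would also use that, since these stacks are represented by smooth affine curves over $\Z$ for $N\ge4$ by \Cref{prop:mu_N_rigid}, their $p$-completions are computed by the functor of points on $p$-complete rings; it thus suffices to produce the asserted equivalences on $R$-points for $p$-complete $R$ and to check functoriality, the cases $N<4$ following after adjoining auxiliary prime-to-$p$ level and descending along the resulting finite group action.

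Now suppose $p\mid N$, so $v\ge1$. By \Cref{lem:mu_N_p_ord}, every elliptic curve over a $p$-complete ring admitting a $\mu_N$-level structure is $p$-ordinary, so the forgetful map $\Mell(\mu_N)^\wedge_p\to\Mell^\wedge_p$ factors through the open substack $\Mello$, as does the structure map of $\Mello(p^v,\Gamma_1(N'))$; I would compare the two over $\Mello$. For the $p$-primary part: over $\Zp$ one has $x^{p^v}-1\equiv(x-1)^{p^v}\bmod p$, so $\mu_{p^v}$ reduces to an infinitesimal group scheme modulo $p$; as a homomorphism from $\mu_{p^v}$ into an \'etale group scheme over a $p$-complete base injects into its reduction modulo $p$, it follows that every $\mu_{p^v}\to C$ factors through the $p^v$-torsion $\wh C[p^v]$ of the formal group, which for ordinary $C$ is finite flat of rank $p^v$ (as $\wh C$ has height $1$; see \Cref{prop:can_subgp} and its iterates). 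A monomorphism $\mu_{p^v}\hookrightarrow\wh C[p^v]$ between finite flat group schemes of equal rank is an isomorphism, so over $\Mello$ a $\mu_{p^v}$-level structure is the same datum as an isomorphism $\eta_p:\mu_{p^v}\simto\wh C[p^v]$. For the prime-to-$p$ part: $\mu_{N'}$ is finite \'etale over $\Zp$, and I would invoke the (non-canonical) equivalence between $\mu_{N'}$-level structures and $[\Gamma_1(N')]$-level structures $\eta':\Z/N'\hookrightarrow C[N']\subseteq C[N]$ recalled in the introduction --- concretely $(C,\mu_{N'}\hookrightarrow C)\mapsto C/\mu_{N'}$, transporting kernels under Cartier duality of isogenies and the canonical principal polarizations, cf.\ \cite{amec}. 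Assembling these identifications over $\Mello$ yields $\Mell(\mu_N)^\wedge_p\simeq\Mello(p^v,\Gamma_1(N'))$.

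Finally, the case $p\nmid N$ is the degenerate case $v=0$: here there is no ordinarity constraint, $\mu_N=\mu_{N'}$ is finite \'etale over $\Zp$, and the same equivalence $\Mell(\mu_N)\simeq\Mell(\Gamma_1(N))$ over $\Z[1/N]$ gives $\Mell(\mu_N)^\wedge_p\simeq(\Mell)^\wedge_p(\Gamma_1(N))$ after $p$-completion. I expect the principal obstacle to be the step showing that an embedding $\mu_{p^v}\hookrightarrow C$ is automatically an isomorphism onto $\wh C[p^v]$ when $C$ is $p$-ordinary: this is where the ordinarity forced by \Cref{lem:mu_N_p_ord} is indispensable, and it rests on the infinitesimality of $\mu_{p^v}$ modulo $p$ together with the canonical-subgroup structure of \Cref{prop:can_subgp} and a rank count; a secondary point is checking that the identifications above are genuine equivalences of stacks rather than bijections on $R$-points alone.
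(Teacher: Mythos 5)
Your argument is correct and spells out exactly what the paper's one-line proof (``Canonical subgroups and \Cref{lem:mu_N_p_ord}.'') intends: \Cref{lem:mu_N_p_ord} forces $p$-ordinarity so that the structure map factors through $\Mello$, and canonical subgroups (\Cref{prop:can_subgp}), via the connected--\'etale sequence plus a rank count, promote any embedding $\mu_{p^v}\hookrightarrow C$ to an isomorphism onto $\wh{C}[p^v]$. Your handling of the prime-to-$p$ factor by the duality/quotient equivalence and of the degenerate $v=0$ case likewise matches the paper's intent.
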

	\begin{proof}
		This follows from \Cref{lem:mu_N_p_ord}.
	\end{proof}
	\begin{prop}\label{prop:xi_torsor_stacks}
		The forgetful map $\xi:\Mello(p^v,\Gamma_1(N'))\to \Mello(\Gamma_0(N'))$ is a $\znx$-torsor of stacks.
	\end{prop}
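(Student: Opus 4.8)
The plan is to factor $\xi$ through an intermediate stack and exhibit it as a composite of two torsors whose structure groups are the Chinese-remainder factors of $\znx\simeq \zx{p^v}\times\zx{N'}$. I would introduce $\Mello(\Gamma_1(N'))$, the $p$-completed moduli stack of $p$-ordinary (generalized) elliptic curves $C$ together with a $\Gamma_1(N')$-level structure $\eta':\Z/N'\hookrightarrow C$; since $p\nmid N'$ this is a classical level structure in the sense of \cite{amec}. Then $\xi$ factors as
\[
\Mello(p^v,\Gamma_1(N'))\xrightarrow{\ \xi_2\ }\Mello(\Gamma_1(N'))\xrightarrow{\ \xi_1\ }\Mello(\Gamma_0(N')),
\]
where $\xi_2$ forgets the trivialization $\eta_p:\mu_{p^v}\simto\wh C[p^v]$ and $\xi_1$ sends a point $\eta'$ of exact order $N'$ to the cyclic subgroup $\langle\eta'\rangle\subseteq C$ of order $N'$ that it generates.

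First I would identify the two torsor structures. The square with corners $\Mello(p^v,\Gamma_1(N'))$, $\Mello(\Gamma_1(N'))$, $\Mello(p^v)$, $\Mello$ is cartesian, so $\xi_2$ is the base change along $\Mello(\Gamma_1(N'))\to\Mello$ of the Igusa-tower map $\Mello(p^v)\to\Mello$, which is a $\zx{p^v}$-torsor (with $\zx{p^v}=\mathrm{Aut}(\mu_{p^v})$ acting by precomposition on $\eta_p$, cf.\ the discussion around \eqref{igusa_tower}); hence $\xi_2$ is a $\zx{p^v}$-torsor. For $\xi_1$, the group $\zx{N'}=\mathrm{Aut}(\Z/N')$ acts by precomposition on $\eta'$, this action is compatible with $\xi_1$ and preserves $\langle\eta'\rangle$, and \'etale-locally on $\Mello(\Gamma_0(N'))$ every cyclic subgroup of order $N'$ admits a generator (such a subgroup being finite \'etale, as $p\nmid N'$), any two generators differing by a unique element of $\zx{N'}$; so $\xi_1$ is a $\zx{N'}$-torsor. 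This is the standard $\Gamma_1$-versus-$\Gamma_0$ comparison, which one may also read off from \cite{amec}.

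Next I would check that the $\zx{p^v}$-action on $\eta_p$ and the $\zx{N'}$-action on $\eta'$ on $\Mello(p^v,\Gamma_1(N'))$ commute --- they act on disjoint parts of the moduli datum --- so together they define an action of $\zx{p^v}\times\zx{N'}\simeq\znx$ under which $\xi$ is invariant, and that the induced map $\underline{\znx}\times\Mello(p^v,\Gamma_1(N'))\to\Mello(p^v,\Gamma_1(N'))\times_{\Mello(\Gamma_0(N'))}\Mello(p^v,\Gamma_1(N'))$ is an isomorphism: over a point $(C,H)$ with $H$ cyclic of order $N'$, the fiber of $\xi$ is the product of the $\zx{p^v}$-torsor of trivializations $\mu_{p^v}\simto\wh C[p^v]$ with the $\zx{N'}$-torsor of generators of $H$. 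Together with the flatness and surjectivity of $\xi$ inherited from $\xi_1$ and $\xi_2$, this exhibits $\xi$ as a $\znx$-torsor of stacks.

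The main obstacle is purely organizational: verifying that the composite of a $\zx{p^v}$-torsor with a $\zx{N'}$-torsor is an honest $\znx$-torsor rather than merely an iterated torsor. This is exactly where one uses that the two group actions commute and lift compatibly to the top stack, together with the Chinese remainder isomorphism; one should also note that the possible extra automorphisms of $(C,\eta')$ when $N'$ is small cause no trouble, since we only assert a torsor of stacks and not of schemes.
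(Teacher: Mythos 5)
Your proof is correct. The paper's own proof of this proposition is the single sentence ``One can check this by unraveling the definition of $G$-torsors of stacks,'' so there is nothing substantive to compare against; your argument supplies the details the paper leaves implicit, and it does so in a reasonable way. Factoring $\xi$ through $\Mello(\Gamma_1(N'))$, recognizing $\xi_2$ as a base change of the Igusa-tower map $\Mello(p^v)\to\Mello$ (which the paper has already asserted to be a $\zx{p^v}$-torsor in the discussion around \eqref{igusa_tower}), identifying $\xi_1$ with the classical $\Gamma_1(N')$-versus-$\Gamma_0(N')$ comparison for $N'$ invertible, and then checking that the two actions commute and assemble via the Chinese remainder isomorphism into an honest $\znx$-torsor, is exactly the ``unraveling'' the paper has in mind. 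Your closing remark that automorphisms of $(C,\eta')$ at small $N'$ are harmless because one asserts a torsor of stacks rather than of schemes is the right thing to say. One small point worth making explicit: since $\xi_2$ is pulled back from a scheme-level torsor and $\xi_1$ is finite \'etale (as $N'$ is invertible), the map $\xi$ is in particular relatively representable; this is used implicitly later (for instance in \Cref{prop:mu_3_rigid}) and is worth recording as part of the verification.
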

	\begin{proof}
		One can check this by unraveling the definition of $G$-torsors for stacks.		
	\end{proof}
	\begin{prop}\label{prop:mu_3_rigid}
		The stack $\Mello(p^v,\Gamma_1(N'))$ is represented by a smooth formal affine curve over $\Zp$ in the following cases:
		\begin{itemize}
			\item $N=p^v\cdot N'\ge 4$ for any $p$.
			\item $N=p=3$.
			\item $N=N'=3$ and $p\equiv 2\mod 3$.
		\end{itemize}
	\end{prop}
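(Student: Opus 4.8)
The plan is to establish that $\Mello(p^v,\Gamma_1(N'))$ is, in each of the three cases, representable by a scheme; everything else then follows formally. The map forgetting both level structures, $\Mello(p^v,\Gamma_1(N'))\to\Mello$, factors as the Igusa-tower $\zx{p^v}$-torsor $\Mello(p^v,\Gamma_1(N'))\to\Mello(\Gamma_1(N'))$ followed by the $[\Gamma_1(N')]$-level map $\Mello(\Gamma_1(N'))\to\Mello$, which is finite étale since $p\nmid N'$; so the composite is finite étale. Hence, once $\Mello(p^v,\Gamma_1(N'))$ is known to be a scheme, it is automatically smooth of relative dimension $1$ over $\Zp$ (being étale over $\Mello$, which is smooth of relative dimension $1$ over $\Zp$), and it is affine: for $r\ge 2$ the fiber product $\Mello(p^v,\Gamma_1(N'))\times_{\Mello}\Mello(p^r)$ is finite over the affine formal scheme $\Mello(p^r)$ of \Cref{prop:mello_p_affine}, hence affine, and it is finite surjective over $\Mello(p^v,\Gamma_1(N'))$, so the latter is affine by Chevalley's theorem that a scheme admitting a finite surjection from an affine scheme is affine.

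It remains to prove representability by a scheme, which — by the criterion of \textup{\cite[Corollary 4.7.1]{amec}} applied over $\Mello$, exactly as in the proof of \Cref{prop:mu_N_rigid}, together with the finite étaleness just noted — reduces to rigidity: $\mathrm{Aut}(C,\eta_p,\eta')=1$ for every object. For $N=p^vN'\ge 4$ this is essentially known: \Cref{prop:mu_N_p-completion} identifies $\Mello(p^v,\Gamma_1(N'))$ with $(\Mell(\mu_N))^\wedge_p$ when $p\mid N$ and with an open substack of it when $p\nmid N$, and \Cref{prop:mu_N_rigid} says $\Mell(\mu_N)$ is a scheme for $N\ge 4$. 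For $N=p=3$ one has $N'=1$, so $\Mello(3,\Gamma_1(1))$ is just the first stage $\Mello(p)$ of the Igusa tower, which is an affine formal scheme by \Cref{prop:mello_p_affine}.

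The substantive case is $N=N'=3$ with $p\equiv 2\bmod 3$: here $v=0$, and an object is an ordinary elliptic curve $C$ over a $\Zp$-algebra together with a point $P$ of exact order $3$, so rigidity must be checked directly. Any non-trivial automorphism $\phi$ of an elliptic curve is either $-1$ or has order $3$, $4$, or $6$, in which latter cases $j(C)\in\{0,1728\}$. If $\phi=-1$ then $\phi(P)=-P\ne P$ since $P$ has order $3$. If $\phi$ has order $3$ or $6$ then $j(C)=0$; but $p\equiv 2\bmod 3$ makes the elliptic curve with $j=0$ supersingular, hence not an object of $\Mello$, so this case cannot occur. If $\phi$ has order $4$ then $j(C)=1728$ and $\Z[\phi]\subseteq\mathrm{End}(C)$ with $\phi\mapsto i\in\Z[i]$, so $\deg(\phi-1)=(\phi-1)(\bar\phi-1)=|i-1|^2=2$ and $\ker(\phi-1)$ has order $2$, which cannot contain the order-$3$ point $P$. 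Thus $\phi$ never fixes $(C,P)$, so $\mathrm{Aut}(C,P)=1$ and $\Mello(\Gamma_1(3))$ is representable by a scheme.

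Combining all of this, in each listed case $\Mello(p^v,\Gamma_1(N'))$ is a scheme, affine and smooth of relative dimension $1$ over $\Zp$, i.e.\ a smooth formal affine curve over $\Zp$. I expect the only real obstacle to be the last rigidity verification; the point there is that $\Mell(\mu_3)$ fails to be a scheme over $\Z$ — it is non-rigid at $j=0$ — yet becomes representable $p$-adically precisely because, under the proposition's hypotheses, the curve with $j=0$ (and, where it would matter, $j=1728$) is supersingular and so drops out of the ordinary locus.
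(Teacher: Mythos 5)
Your proof is correct and the overall structure matches the paper's: dispatch $N\ge 4$ via \Cref{prop:mu_N_p-completion} and \Cref{prop:mu_N_rigid}, dispatch $N=p=3$ via \Cref{prop:mello_p_affine}, and do a bespoke rigidity check for $N=N'=3$. Where you diverge is in that last rigidity verification. The paper adapts \cite[Corollary 2.7.3]{amec} to conclude that any automorphism $\varepsilon$ fixing a $\Gamma_1(3)$-structure satisfies $\varepsilon^2+\varepsilon+1=0$, hence has order exactly $3$, and then appeals to the fact that the $j=0$ curve is supersingular when $p\equiv 2\bmod 3$ — one quadratic relation, one case to exclude. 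You instead case on the possible order ($2$, $3$, $4$, $6$) of a nontrivial automorphism of an ordinary curve: order $2$ moves an order-$3$ point because $P\ne -P$; orders $3$ and $6$ force $j=0$, which is supersingular; order $4$ forces $j=1728$, where you compute $\ker(\phi-1)$ has order $2$ so cannot contain $P$. Your argument is a bit longer but is self-contained (no appeal to \cite[Corollary 2.7.3]{amec}) and additionally handles the $j=1728$ curve, which the paper's quadratic-relation argument makes unnecessary since it rules out order $4$ a priori. You also spell out, more explicitly than the paper does, why representability by a scheme automatically yields a smooth affine formal curve, using finite étaleness over $\Mello$ for smoothness and a Chevalley-style descent from the affine $\Mello(p^r)$ for affineness; the paper simply refers back to the argument of \Cref{prop:mu_N_rigid}. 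One small point worth flagging in your case analysis: the claim that a nontrivial automorphism has order $2$, $3$, $4$, or $6$ uses that the curve is \emph{ordinary} (so the larger automorphism groups that occur for supersingular curves in characteristic $2$ or $3$ never appear), which is implicit in your argument but deserves a word.
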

	\begin{proof}
		When $N\ge 4$, the stack $\Mello(p^v,\Gamma_1(N'))$ is the $p$-completion (when $p\mid N$), or a distinguished open substack of the $p$-completion (when $p\nmid N$) of $\Mell(\mu_N)$ by \Cref{prop:mu_N_p-completion}. As the latter is represented by a smooth affine curve over $\Z$ by \Cref{prop:mu_N_rigid}, the first case of the claim follows.
		
		When $N=p=3$, $\Mello(3)$ is affine by \Cref{prop:mello_p_affine}.
		
		When $N=3$ and $p\neq 3$, it suffices to show the moduli problem is rigid as in the proof of \Cref{prop:mu_N_rigid}. Let $\varepsilon$ be a nontrivial automorphism of $C$ that preserves a $\Gamma_1(3)$-level structure $\eta'\colon \underline{\Z/3}\hookrightarrow C[3]$. Adapting the proof of \cite[Corollary 2.7.3]{amec} to the $N=3$ case, we can show $\varepsilon$ must satisfy $\varepsilon^2+\varepsilon+1=0$. This implies $\varepsilon$ is an element of order $3$ in $\aut(C)$. By \cite[Proposition A.1.2.(c)]{aec}, $\aut(C)$ has an element of order $3$ iff the $j$-invariant of the elliptic curve $C$ is $0$. By \cite[Example V.4.4, Exercise 5.7]{aec}, the $j=0$ elliptic curve is $p$-supersingular when $p\equiv 2\mod 3$. As a result, when $p\equiv 2\mod 3$, there is no non-trivial automorphism of a $p$-ordinary elliptic $C$ that preserves a $\Gamma_1(3)$-structure. This shows the moduli problem $\Mello(\Gamma_1(3))$ is rigid at such primes, and hence represented by a smooth formal affine curve over $\Zp$.
	\end{proof}
	\begin{rem}
		The moduli problem $\Mello(\Gamma_1(3))$ is \emph{not} rigid when $p\equiv 1\mod 3$. For such primes, the $j=0$ elliptic curve $C$ is $p$-ordinary.  $C$ has an automorphism $\varepsilon$ of order $3$. As $C[3]$ is isomorphic to the constant group scheme $\underline{\Z/3^{\oplus 2}}$, the automorphism $\varepsilon$ restricts to an element of order $3$ in $\GL_2(\Z/3)$. From the identity $0=\varepsilon^3-1=(\varepsilon-1)^3$ in $\mathrm{End}(C[3])\cong  M_2(\Z/3)$, $\varepsilon$ is unipotent. Then there is a basis $\{P,Q\}$ of $C[3]$ under which $\varepsilon$ acts by the matrix $\begin{pmatrix}
		1&1\\0&1
		\end{pmatrix}$. Let $\eta'\colon \underline{\Z/3}\hookrightarrow C[3]$ that sends $1\in \underline{\Z/3}$ to $P\in C[3]$. The matrix representations of $\varepsilon$ shows it is an automorphism of the pair $(C,\eta')$. Consequently, $\Mello(\Gamma_1(3))$ is not rigid. Therefore the moduli problem not represented by a scheme.
	\end{rem}
	\begin{prop}\label{prop:Eisenstein_decomp}
		Let $\chi$ be a Dirichlet character of conductor $N$, where $N=p^vN'$ with $p\nmid N'$. Denote the Eisenstein subspace in the $\chi^{-1}$-eigensubspace in $H^0(\Mello(p^v,\Gamma_1(N')),\bfo{k}\otimes\Zp[\chi])$ by $\Ecal_{k}(p^v,\Gamma_1(N'),\chi)$. Then we have a decomposition:
		\begin{equation*}
		\Ecal_k(\mu_N,\chi)^\wedge_p\cong  \bigoplus_{[\sigma]\in \cok \iota^*} \Ecal_k(p^v,\Gamma_1(N'),\iota\circ \sigma\circ \chi),
		\end{equation*}
		where $\iota\colon \Q(\chi)\hookrightarrow \Cp$ is a field extension and $\iota^*\colon  \gal(\iota(\Q(\chi))/\Qp)\to \gal(\Q(\chi)/\Q)$ is the induced map of $\iota$ on Galois groups.
	\end{prop}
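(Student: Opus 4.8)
The plan is to reduce the statement to how the coefficient ring $\Z[\chi]$ splits after $p$-completion, using that the operations involved --- $p$-completion, cutting out a $\znx$-eigensubspace for the diamond operators, and passing to the Eisenstein part --- are exact and mutually compatible. First I would note that $\Ecal_k(\mu_N,\chi)$ is a finitely generated $\Z[\chi]$-module (by \Cref{thm:eisenstein_subspace_basis} it has finite rank), so $(-)^\wedge_p$ agrees with the flat base change $-\otimes_\Z\Zp$. Being flat, this base change commutes with forming the $\znx$-eigensubspace of $H^0(\Mell(\mu_N),\bfo{k})\otimes_\Z\Z[\chi]$ and with passing to the Eisenstein part; moreover the diamond operators commute with the Hecke operators, so passing to the Eisenstein part of \eqref{eqn:Tmod_decomp} is compatible with taking $\znx$-eigensubspaces. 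By \Cref{prop:mu_N_p-completion} we have $H^0(\Mell(\mu_N),\bfo{k})^\wedge_p\simeq H^0(\Mello(p^v,\Gamma_1(N')),\bfo{k})$, and under this isomorphism the $\znx=\zx{p^v}\times\zx{N'}$-action by diamond operators and the Hecke action correspond to the ones on $\Mello(p^v,\Gamma_1(N'))$; on the prime-to-$p$ factor one uses, as in \Cref{lem:mu_N_Gamma_1_N}, an identification $\mu_{N'}\simeq\underline{\Z/N'}$, available over $\Zp$-algebras because $N'$ is a unit there. Putting this together,
\begin{equation*}
\Ecal_k(\mu_N,\chi)^\wedge_p\ \simeq\ \left(\left(H^0\bigl(\Mello(p^v,\Gamma_1(N')),\bfo{k}\bigr)\otimes_{\Zp}\bigl(\Z[\chi]\otimes_\Z\Zp\bigr)\right)^{\chi^{-1}}\right)^{\mathrm{Eis}},
\end{equation*}
where $(-)^{\mathrm{Eis}}$ denotes the Eisenstein subspace; when $p\nmid N$ the same holds with $v=0$ and $N'=N$ by the open-substack clause of \Cref{prop:mu_N_p-completion}.

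Next I would decompose the coefficient ring. With $n=\mathrm{ord}(\chi)$, the ring $\Z[\chi]=\Z[\zeta_n]$ is the ring of integers of the abelian number field $\Q(\chi)$, so $\Z[\chi]\otimes_\Z\Zp\simeq\prod_{\mathfrak{p}\mid p}\Z[\chi]_{\mathfrak{p}}$ is a finite product of complete discrete valuation rings indexed by the primes of $\Z[\chi]$ over $p$. The chosen embedding $\iota:\Q(\chi)\hookrightarrow\Cp$ singles out one such prime $\mathfrak{p}_0$, whose decomposition group is $\imag(\iota^*)=\gal(\iota(\Q(\chi))/\Qp)$; since $\Q(\chi)/\Q$ is Galois with abelian group (so $\imag(\iota^*)$ is normal), the Galois group acts transitively on the primes over $p$ with stabilizer $\imag(\iota^*)$, giving a bijection $\{\mathfrak{p}\mid p\}\leftrightarrow\gal(\Q(\chi)/\Q)/\imag(\iota^*)=\cok\iota^*$. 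For a class $[\sigma]\in\cok\iota^*$ the corresponding factor is the completion of $\Z[\chi]$ along the embedding $\iota\circ\sigma:\Q(\chi)\hookrightarrow\Cp$, with image $\Zp[\iota\circ\sigma\circ\chi]$, and the resulting isomorphism $\Z[\chi]\otimes_\Z\Zp\simeq\prod_{[\sigma]\in\cok\iota^*}\Zp[\iota\circ\sigma\circ\chi]$ carries the composite $\znx\xrightarrow{\chi}\Z[\chi]^\times\hookrightarrow(\Z[\chi]\otimes_\Z\Zp)^\times$ to the tuple of $p$-adic characters $(\iota\circ\sigma\circ\chi)_{[\sigma]}$. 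Substituting into the displayed isomorphism, the $\chi^{-1}$-eigensubspace breaks up as the direct sum over $[\sigma]\in\cok\iota^*$ of the $(\iota\circ\sigma\circ\chi)^{-1}$-eigensubspaces of $H^0(\Mello(p^v,\Gamma_1(N')),\bfo{k})\otimes_{\Zp}\Zp[\iota\circ\sigma\circ\chi]$, and taking the Eisenstein part componentwise identifies the outcome with $\bigoplus_{[\sigma]\in\cok\iota^*}\Ecal_k(p^v,\Gamma_1(N'),\iota\circ\sigma\circ\chi)$, which is the assertion.

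I expect the main obstacle to be the bookkeeping of the second step: one must check that the abstract product decomposition of $\Z[\chi]\otimes_\Z\Zp$ is precisely the one intertwining $\chi$ with the tuple $(\iota\circ\sigma\circ\chi)_{[\sigma]}$, and, on the geometric side, that the identification $\Mell(\mu_N)^\wedge_p\simeq\Mello(p^v,\Gamma_1(N'))$ of \Cref{prop:mu_N_p-completion} is genuinely equivariant for the diamond operators and compatible with the Hecke action defining the Eisenstein subspace. Granted these two compatibilities, the remaining assertions follow formally from flatness of $-\otimes_\Z\Zp$ together with the fact that the Eisenstein subspace is a direct summand as a module over the Hecke algebra.
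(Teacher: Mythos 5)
Your proof is correct and takes essentially the same approach as the paper: everything hinges on the decomposition $\Z[\chi]\otimes_\Z\Zp\simeq\bigoplus_{[\sigma]\in\cok\iota^*}\Zp[\iota\circ\sigma\circ\chi]$ as $\znx$-representations, which the paper simply cites from \cite[Corollary A.3.5]{nz_Dirichlet_J} and you re-derive from the splitting of primes over $p$ in $\Z[\chi]$ together with the transitive Galois action. The surrounding bookkeeping (flatness of $p$-completion, commutation with eigensubspaces and the Eisenstein summand) is implicit in the paper's one-line proof and you fill it in correctly.
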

	\begin{proof}
		This is a result of the equivalence of $p$-adic $\znx$-representations \cite[Corollary A.3.5]{nz_Dirichlet_J}:
		\begin{equation*}
		\Z[\chi]\otimes_\Z \Zp\cong  \bigoplus_{[\sigma]\in \cok \iota^*} \Zp[\iota\circ \sigma\circ \chi].\qedhere
		\end{equation*}
	\end{proof}
	\begin{cor}\label{cor:p-adic_basis}
		Let $\chi_1$ and $\chi_2$ be $p$-adic Dirichlet characters of conductors $N_1$ and $N_2$, respectively. Then the normalized Eisenstein series $E_{k,\chi_1,\chi_2}$ in \Cref{defn:E_k_normalization} defines a $p$-adic Eisenstein series in $\Ecal_{k}(p^v,\Gamma_1(N'),\chi_2/\chi_1)$, where $N=N_1N_2=p^vN'$ and $p\nmid N'$.
	\end{cor}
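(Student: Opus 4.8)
The plan is to descend the complex-analytic description of the Eisenstein subspace (\Cref{thm:eisenstein_subspace_basis}) to the integral and then the $p$-adic moduli, by the same $q$-expansion argument used to prove the unlabelled proposition preceding \Cref{defn:E_k_normalization}. First I would combine that proposition, which says $E_{k,\chi_1,\chi_2}(q)\in(H^0(\Mell(\mu_N),\bfo{k})\otimes_\Z\Z[\chi_1,\chi_2])^{\chi_1/\chi_2}$, with \Cref{thm:eisenstein_subspace_basis}: since $N=N_1N_2$, the only basis element $G_{k,\chi_1,\chi_2}(tz)$ with $(N_1N_2t)\mid N$ is the one with $t=1$, so its normalization $E_{k,\chi_1,\chi_2}$ lies in the Eisenstein subspace $\Ecal_k(\mu_N,\chi_2/\chi_1)$ and has $q$-expansion coefficients in $\Z[\chi_1,\chi_2]$ by \Cref{defn:E_k_normalization} (with the constant term $-B_{k,\chi_2}/2k$ carried along in the case $\chi_1=\chi^0$). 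Thus $E_{k,\chi_1,\chi_2}$ is a genuine section of $\bfo{k}$ over $\Mell(\mu_N)\x_{\spec\Z}\spec\Z[\chi_1,\chi_2]$, lying in the $\chi_1/\chi_2$-eigenspace for the diamond action.

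Next I would $p$-complete. Fix the embedding $\iota:\Q(\chi_1,\chi_2)\hookrightarrow\Cp$ recording the chosen $p$-adic values of $\chi_1,\chi_2$, apply $(-)^\wedge_p$ to the section above over $\Zp[\chi_1,\chi_2]$, and use \Cref{prop:mu_N_p-completion}: $\Mell(\mu_N)^\wedge_p$ is either $\Mello(p^v,\Gamma_1(N'))$ (when $p\mid N$) or has $\Mello(p^v,\Gamma_1(N'))=\Mello(\Gamma_1(N))$ as its ordinary open substack (when $p\nmid N$), so restricting if necessary yields a section of $\bfo{k}\otimes\Zp[\chi_1,\chi_2]$ over $\Mello(p^v,\Gamma_1(N'))$ with $q$-expansion $\iota(E_{k,\chi_1,\chi_2}(q))$, i.e.\ $E_{k,\chi_1,\chi_2}(q)$ read $p$-adically. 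This section is again Eisenstein because the Hecke-equivariant splitting \eqref{eqn:Tmod_decomp} is compatible with base change and $p$-completion. The $\znx$-torsor action of \Cref{prop:xi_torsor_stacks} together with the $\Zpx$-factor moves the $q$-expansion through $\chi_2/\chi_1$, and since $\Mello(p^v,\Gamma_1(N'))$ is connected and rigid (\Cref{prop:mu_N_rigid}, \Cref{prop:mu_3_rigid}, and \cite[Theorem 1.2.1]{Conrad_arithmetic_moduli}), the $q$-expansion principle \Cref{prop:q-exp} forces the section itself into the $(\chi_2/\chi_1)^{-1}$-eigenspace; via \Cref{prop:Eisenstein_decomp} this identifies $E_{k,\chi_1,\chi_2}$ with the $[\mathrm{id}]$-summand $\Ecal_k(p^v,\Gamma_1(N'),\chi_2/\chi_1)$, after extending scalars from $\Zp[\chi_2/\chi_1]$ to $\Zp[\chi_1,\chi_2]$ when the latter is strictly larger.

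The step I expect to be the real obstacle is precisely the bookkeeping of conductors and coefficient rings hidden in the last clause: $\chi_2/\chi_1$ need not be primitive of conductor exactly $N$ — its conductor only divides $\mathrm{lcm}(N_1,N_2)$ — and the $q$-expansion of $E_{k,\chi_1,\chi_2}$ genuinely lives in $\Z[\chi_1,\chi_2]$, not in $\Z[\chi_2/\chi_1]$ (e.g.\ when $\chi_1,\chi_2$ have order $4$ but $\chi_2/\chi_1$ has order $2$). I would resolve this by observing that any character mod $N$, primitive or not, cuts out an eigenspace for the \emph{full} diamond action on $M_k(\mu_N)$, so that \Cref{prop:Eisenstein_decomp} and the comparison \Cref{prop:mu_N_p-completion} apply verbatim with $\chi_2/\chi_1$ regarded as a character mod $N$; and that enlarging the coefficient ring of a $p$-adic modular form from $\Zp[\chi_2/\chi_1]$ to the finite flat extension $\Zp[\chi_1,\chi_2]$ is harmless for deciding whether a given $q$-expansion defines a section, again by the $q$-expansion principle on the connected rigid moduli. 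With these conventions fixed, the argument above is just the $\mathbb{C}\rightsquigarrow\Z[\chi_1,\chi_2]\rightsquigarrow\Zp[\chi_1,\chi_2]$ transport of \Cref{thm:eisenstein_subspace_basis}, and nothing beyond routine compatibility checks remains.
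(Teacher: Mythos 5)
Your proposal is correct and matches the chain of reasoning the paper leaves implicit: the corollary is meant to follow directly from the unlabelled proposition placing $E_{k,\chi_1,\chi_2}$ in $\bigl(H^0(\Mell(\mu_N),\bfo{k})\otimes_\Z\Z[\chi_1,\chi_2]\bigr)^{\chi_1/\chi_2}$, together with \Cref{prop:mu_N_p-completion} and the splitting of \Cref{prop:Eisenstein_decomp}, and the paper gives no further argument. You have essentially spelled out that implicit transport $\Cbb\rightsquigarrow\Z[\chi_1,\chi_2]\rightsquigarrow\Zp[\chi_1,\chi_2]$, and your flagged concern in the final paragraph — that $\chi_2/\chi_1$ need not be primitive of conductor exactly $N$, so \Cref{prop:Eisenstein_decomp} must be read with $\chi_2/\chi_1$ as a possibly imprimitive character mod $N$, with coefficients possibly enlarged from $\Zp[\chi_2/\chi_1]$ to $\Zp[\chi_1,\chi_2]$ — is a real bookkeeping point the paper glosses over, and your resolution via the full diamond action and scalar extension is the right one.
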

	\section{Eisenstein series and Galois representations}
	In this section, we adapt Katz's explanation of congruences of $E_{2k}$ as $p$-adic modular forms in \cite{padic} to study the congruences of $p$-adic Eisenstein series with level $(\mu_{p^v},\Gamma_1(N'))$. The statement and proof of the Main Theorem (\ref{thm:main}) rely heavily on the Dieudonn\'e theory of formal groups and formal $A$-modules, which will be briefly reviewed in the next subsection. A reference for the general theory of formal groups and Dieudonn\'e theory can be found in \cite{Demazure_p-div}. 
	\subsection{Review of Dieudonn\'e modules and Galois descent of formal groups}
	Let $R$ be a $p$-complete smooth $\Zp$-algebra such that $R/p$ is an integrally closed domain and $R$ admits an endomorphism $\varphi\colon  R\to R$ that lifts the $p$-th power map on $R$.
	
	The \textbf{Dieudonn\'e module} $\Dbb(\Gh)$ of a formal group $\Gh_0$ over $R/p$ is a triple
	\begin{equation*}
	\Dbb(\Gh)=(M,F\colon \varphi^*M\longrightarrow M, V\colon M\longrightarrow \varphi^*M),
	\end{equation*}
	where $M=PH^1_{\dR}(\Gh/R)$ is the primitives in the de-Rham cohomology for some lift $\Gh$ of $\Gh_0$ to $R$ and $FV=p=VF$ on the respective domains. Formal groups of the same height $h<\infty$ over $R/p$ are \'etale locally isomorphic to each other. It follows that their isomorphism classes are classified by the continuous Galois cohomology $H_c^1(\piet(R/p);\aut(\Gamma_h))$, where $\Gamma_h$ is Honda formal group of height $h$. The Galois cohomology class $[\rho]\in H_c^1(\piet(R/p);\aut(\Gamma_h))$ that corresponds to $\Gh_0$ is called the \textbf{Galois descent data} of $\Gh_0$.
	
	When $\Gh$ has height (slope) $1$, $PH^1_{\dR}(\Gh/R)=\bfomega(\Gh)$ is the sheaf of invariant differentials of $\Gh$ and $F\colon \varphi^*M\longrightarrow M$ is an isomorphism. As a result, the Verschiebung $V$ is determined by $F$ in this case. We will write $\Dbb(\Gh)=(\bfomega(\Gh),F\colon \varphi^*\bfomega(\Gh)\simto \bfomega(\Gh))$ when $\Gh$ has height $1$.
	\begin{exmp}\label{exmp:D_Gmh}
		Let $R$ be a $p$-complete algebra and $\varphi\colon R\to R$ be a lift of Frobenius map. Denote the Dieudonn\'e module of $\Gmh/R$, the multiplicative formal group over $R$, by  $\Dbb(\Gmh)=(M,F\colon \varphi^*M\simto M)$. Then $M$ is a free $R$-module of rank $1$ generated by an element $\gamma$ such that $F(\gamma)=\gamma$.
	\end{exmp}
	The Galois descent data of height $1$ formal groups are described by the following:
	\begin{prop}\label{prop:FG_Isom_ab}
		Isomorphism classes of formal groups over a $p$-complete algebra $R$ with height $1$ reductions modulo $p$ are classified by the abelian group $\hom(\piet(R),\Zpx)$. In particular, the constant map in $\hom(\piet(R),\Zpx)$ corresponds to $\Gmh$.
	\end{prop}
	\begin{proof}
		When $h=1$, $\Gamma_1=\Gmh$ and $\aut(\Gmh)\cong \Zpx$ is an abelian group. Since $R$ is $p$-complete, we have $\piet(R)\cong  \piet(R/p)$. Using the fact that formal groups of height $1$ over $R/p$ are \'etale locally isomorphic to $\Gmh$, the group cohomology $H_c^1(\piet(R);\Zpx)\cong  H_c^1(\piet(R/p);\Zpx)$ classifies formal groups of height $1$ over $R/p$ up to isomorphisms. In particular, the Galois cohomology class represented by the constant map corresponds to $\Gmh$ over $R/p$. This Galois cohomology is an abelian group since $\Zpx$ is an abelian group. As the \'etale fundamental group acts trivially on $\Zpx$, we have $H_c^1(\piet(R);\Zpx)\cong \hom(\piet(R),\Zpx)$. This shows $\Gmh$ is classified by the constant group homomorphism in $\hom(\piet(R),\Zpx)$.
		
		By the Lubin-Tate deformation theory of formal groups, height $1$ formal groups over $R/p$ have unique deformations to $R$. This yields $\hom(\piet(R),\Zpx)\cong  H_c^1(\piet(R);\Zpx)$ classifies formal groups over $R$ with height $1$ reductions modulo $p$ up to isomorphisms.
	\end{proof}
	\Cref{prop:FG_Isom_ab} suggests a natural closed symmetric monoidal structure in the category of $1$-dimensional formal groups of height $1$. Let $\rho_i\colon \piet(R)\to\Zpx$ be the Galois descent data for the height $1$ formal groups $\Gh_i$, $i=1,2$. Then the Galois descent data for $\Gh_1\otimes \Gh_2$ is $\rho_1\cdot \rho_2$. In terms of Dieudonn\'e modules, this monoidal structure is described by
	\begin{equation*}
	\Dbb(\Gh_1\otimes \Gh_2)=(\bfomega_1\otimes_R \bfomega_2,  F_1\otimes F_2\colon \varphi^*(\bfomega_1\otimes_R \bfomega_2)\cong  \varphi^*\bfomega_1\otimes_{(\varphi^*R)} \varphi^*\bfomega_2\simto \bfomega_1\otimes_R \bfomega_2),
	\end{equation*}
	where $\Dbb(\Gh_i)=(\bfomega_i,F_i:\varphi^*\bfomega_i\simto \bfomega_i)$. Below are two relevant examples for this paper:
	
	\begin{exmp}\label{exmp:FG_ell}
		Let $C$ be the universal elliptic curve over $\Mello$ and $\wh{C}$ be its formal group. $\wh{C}$ is a height $1$ formal group since $C$ is a $p$-ordinary elliptic curve. Denote the Galois descent data for $\wh{C}$ by $\rho^1\colon \piet(\Mello)\to \Zpx$. The pair $(\bfomega, F\colon \varphi^*\bfomega\simto \bfomega)$ described in \Cref{prop:can_subgp} is the Dieudonn\'e module of $\wh{C}$. On $q$-expansions of modular forms, the Frobenius acts by the formula $F(f(q))=f(q^p)$. Denote of the $k$-th monoidal power of $\wh{C}$ by $\wh{C}^{\otimes k}$. The Galois descent data for $\wh{C}^{\otimes k}$ is 
		\begin{equation*}
		\rho^k \colon  \piet(\Mello)\xrightarrow{\rho^1}\Zpx\xrightarrow{(-)^k}\Zpx,
		\end{equation*}
		The Dieudonn\'{e} module of $\wh{C}^{\otimes k}$ is
		\begin{equation*}
		\Dbb(\wh{C}^{\otimes k})=(\bfo{k},F^{\otimes k}\colon \varphi^*\bfo{k}\simto \bfo{k}),
		\end{equation*}
		where $F^{\otimes k}(f(q))=f(q^p)$. 
	\end{exmp}
	
	As the Eisenstein series we study in this paper have coefficients in $\Zp[\chi]$, it is necessary to work with formal $\Zp[\chi]$-modules. Let $A$ be an algebra. A formal $A$-module is a formal group $\Gh$ together with an embedding of algebras $i\colon A\hookrightarrow \mathrm{End}_{FG}(\Gh)$ such that the composite
	\begin{equation*}
	\begin{tikzcd}
	A\rar[hookrightarrow]& \mathrm{End}_{FG}(\Gh)\rar& \mathrm{End}(\bfomega(\Gh))
	\end{tikzcd}	
	\end{equation*}
	realizes $\bfomega(\Gh)$ as an $A$-module. We will write the power series representation of $i(a)$ by $[a]$. Any formal group $\Gh$ comes with a unique formal $\Z$-module structure. When $\Gh$ is defined over a $p$-complete ring $R$, this formal $\Z$-module structure extends (uniquely) to a formal $\Zp$-module structure, since $\displaystyle\lim\limits_{v\to\infty}[p^v](t)=0$ in $R\llb t\rrb$.
	\begin{const}\label{con:Formal_A-mod}
		When $A$ is $\Zp$-algebra that is a finite free $\Zp$-module, we define a formal $A$-module $\Gh\otimes A$ out of a $1$-dimensional formal group $\Gh$. The underlying formal group of $\Gh\otimes A$ is $\Gh^{\oplus r}$, where $r$ is the rank of $A$ as a free $\Zp$-module. The $A$-action on $\Gh\otimes A=\Gh^{\oplus r}$ is given by 
		\begin{equation*}
		\begin{tikzcd}
		A=\mathrm{End}_{A\text{-mod}}(A)\rar[hookrightarrow]& \mathrm{End}_{\Zp}(\Zp^{\oplus r})\rar[hookrightarrow]& \mathrm{End}_{FG}(\Gh^{\oplus r}).
		\end{tikzcd}
		\end{equation*}
		where the first map is induced by $A\cong  \Zp^{\oplus r}$. Write $\Dbb(\Gh)=(\bfomega(\Gh),F,V)$. The Dieudonn\'{e} module of $\Gh\otimes A$ is 
		\begin{equation*}
		\Dbb(\Gh\otimes A)=\Dbb(\Gh)\otimes A=(\bfomega(\Gh)\otimes A, F\otimes 1,V\otimes 1).
		\end{equation*}
		When the height of $\Gh$ is $h$, let $[\rho]\in H_c^1(\piet(R);\aut(\Gamma_h))$ be the Galois descent data for $\Gh$. The formal $A$-module $\Gh\otimes A$ is \'etale locally isomorphic to $\Gamma_h\otimes A$. Notice there is an embedding of algebras:
		\begin{equation*}
		\begin{tikzcd}
		i\colon \mathrm{End}(\Gamma_h)\rar[hookrightarrow]& \mathrm{End}_{\text{formal $A$-mod}}(\Gamma_h\otimes A)\cong \mathrm{End}(\Gamma_h)\otimes A \quad g\longmapsto g\otimes 1.
		\end{tikzcd}
		\end{equation*}
		The embedding $i$ restricts to a group homomorphism on units. The Galois descent data for $\Gh\otimes A$ is then the image of $[\rho]$ under the induced map of $i$ in Galois cohomology 
		\[i_*\colon H_c^1(\piet(R);\aut(\Gamma_h))\longrightarrow H_c^1(\piet(R);\aut_{\text{formal $A$-mod}}(\Gamma_h\otimes A)).\] 
	\end{const}	
	\subsection{Statement of the Main Theorem}\label{subsec:sketch_proof}
	Let $\chi\colon \znx\to\Cpx$ be a Dirichlet character of conductor $N$. Write $N=p^vN'$, where $p\nmid N'$. Then $\chi$ uniquely factors as a product $\chi=\chi_p\cdot \chi'$, where $\chi_p$ and $\chi'$ have conductors $p^v$ and $N'$, respectively. Let $\Zp^{\otimes k}[\chi]$ be the $p$-adic $\znx$-representation, whose underlying module is $\Zp[\chi]$ and where $(a,b)\in \Zpx\x\zx{N'}$ acts on $\Zp[\chi]$ by multiplication by $a^k\cdot \chi_p(a)\cdot\chi'(b)$. 
	\begin{thm}[Main Theorem]\label{thm:main}
		Let $\Ical\trianglelefteq\Zp[\chi]$ be an ideal and $k \ge 3$ be an integer. Then the followings are equivalent:
		\begin{enumerate}[label=\textup{(\roman*).}]
			\item There is a modular  form $f\in H^0(\Mello(p^v,\Gamma_1(N')),\bfo{k}\otimes\Zp[\chi])^{\chi^{-1}}$ such that $f(q)\in 1+\Ical q\Zp[\chi] \llb q\rrb$.
			\item There is a generator $\gamma\in H^0(\Mello(\Gamma_0(N')),\bfomega^{k,\chi})$ such that $F^{k,\chi}(\gamma)\equiv \gamma\mod \Ical$. 
			\item $\wh{C}^{k,\chi}[\Ical]\cong  (\Gmh\otimes \Zp[\chi])[\Ical]$.
			\item The Galois descent data $\rho^{k,\chi}\colon \piet(\Mello(\Gamma_0(N'))\to (\Zp[\chi])^\x$ of $\wh{C}^{k,\chi}$ is trivial modulo $\Ical$.
			\item The character $\Zpx\x\zx{N'}\xrightarrow{(a,b)\mapsto \chi_p(a)\chi'(b)a^k}(\Zp[\chi])^\x$ is trivial modulo $\Ical$.
		\end{enumerate}
	\end{thm}
	\begin{rem}
		When the character $\chi$ is trivial, we recover Katz's algebro-geometric explanation of congruences of $p$-adic Eisenstein series of level $1$ in \cite[Corollary 4.4.1]{padic}. In that case, Step I in the proof above is not needed.
	\end{rem}
	\begin{rem}\label{rem:other_method}
		The implication from (i) to (v) in \Cref{thm:main} also follows from \cite[Lemma 1.2.2]{Katz_Eisenstein_measure} when $p$ does not divide $N$. 
	\end{rem}
	The proof of \Cref{thm:main} has three steps, which will be explained in details in the rest of this section. The meanings of the symbols in the statement of the theorem are explained in the proof sketch below. 
	\begin{enumerate}[label=\Roman*., leftmargin=*]
		\item Viewing the Dirichlet character $\chi$ as a Galois cohomology class, we construct a formal $\Zp[\chi]$-module $\wh{C}^{k,\chi}$ of height $1$ over $\Mello(\Gamma_0(N'))$ such that 
		\begin{equation*}
		H^0(\Mello(p^v,\Gamma_1(N')),\bfo{k}\otimes \Zp[\chi])^{\chi^{-1}}\cong  H^0(\Mello(\Gamma_0(N')),\bfomega(\wh{C}^{k,\chi})).
		\end{equation*} 
		In this way, we translate congruences of modular forms in $H^0(\Mello(p^v,\Gamma_1(N')),\bfo{k}\otimes\Zp[\chi])^{\chi^{-1}}$ to those of elements in the Dieudonn\'e module of $\wh{C}^{k,\chi}$.
		\item We relate the congruence of the Dieudonn\'e module $\Dbb(\wh{C}^{k,\chi})$ with that of the Galois descent data $[\rho^{k,\chi}]$ for $\wh{C}^{k,\chi}$ by reformulating a Riemann-Hilbert type correspondence in \cite{padic} using the Dieudonn\'e theory of height $1$ formal $A$-modules and their finite subgroups
		\item The Galois cohomology class $[\rho^{k,\chi}]\in H_c^1(\piet(\Mello(\Gamma_0(N')));(\Zp[\chi])^\x)$ is represented by a group homomorphism that factors as
		\begin{equation*}
		\rho^{k,\chi}\colon \piet(\Mello(\Gamma_0(N')))\xrightarrow{\rho^1\x \lambda_{N'}} \Zpx\x\zx{N'}\xrightarrow{(a,b)\mapsto \chi_p(a)\chi'(b)a^k} (\Zp[\chi])^\x.
		\end{equation*}
		Here $\rho^1\colon \piet(\Mello(\Gamma_0(N')))\to \Zpx$ is the Galois descent data for $\wh{C}$ described in \Cref{exmp:FG_ell} and $\lambda_{N'}\colon \piet(\Mello(\Gamma_0(N')))\to \zx{N'}$ classifies the $\zx{N'}$-torsor $\Mello(\Gamma_1(N'))\to \Mello(\Gamma_0(N'))$. The theorem then follows from the surjectivity of $\rho^1\x\lambda_{N'}$. 
	\end{enumerate}	
	\subsection{Step I: Dirichlet characters and Galois descent} The first step in the proof of the Main Theorem is to view the Dirichlet character $\chi\colon\znx\to \Cpx$ as the Galois descent data for a formal $A$-module $\wh{C}^{k,\chi}$ of height $1$ over $\Mello(\Gamma_0(N'))$ along the $\znx$-torsor $\xi\colon  \Mello(p^v,\Gamma_1(N'))\longrightarrow \Mello(\Gamma_0(N'))$. 
	\begin{const}\label{con:gal_desc}
		Let $(C,\eta_p,\eta')$ be the universal elliptic curve with the given level structures over $\Mello(p^v,\Gamma_1(N'))$ and $\wh{C}$ be its formal group. Then $\wh{C}^{\otimes k}\otimes  \Zp[\chi]$ is a formal $\Zp[\chi]$-module of height $1$. Notice that:
		\begin{itemize}
			\item The automorphism group of $\wh{C}^{\otimes k}\otimes \Zp[\chi]$ as a formal $\Zp[\chi]$-module is $(\Zp[\chi])^\x$.
			\item The forgetful map $\xi\colon \Mello(p^v,\Gamma_1(N'))\to \Mello(\Gamma_0(N'))$ is a $\zx{p^v}\x \zx{N'}\cong  \znx$-torsor.
		\end{itemize}
		The Dirichlet character $\chi\colon\znx\to \Cpx$ then represents a cohomology class 
		\begin{align*}
		[\chi]\in &H^1(\znx;(\Zp[\chi])^\x)\\\cong & H^1(\aut_{\Mello(\Gamma_0(N'))}(\Mello(p^v,\Gamma_1(N')));\aut_{\text{formal $\Zp[\chi]$-mod}}(\wh{C}^{\otimes k}\otimes \Zp[\chi])),
		\end{align*}
		where $\znx$ acts on $(\Zp[\chi])^\x$ trivially. This cohomology group classifies formal $\Zp[\chi]$-modules $\Gh$ over $\Mello(\Gamma_0(N'))$ such that $\xi^*\Gh\cong  \wh{C}^{\otimes k}\otimes \Zp[\chi]$ over $\Mello(p^v,\Gamma_1(N'))$ up to isomorphisms. In this way, the cohomology class $[\chi]$ corresponds to a formal $\Zp[\chi]$-module $\wh{C}^{k,\chi}$ over $\Mello(\Gamma_0(N'))$. More precisely, fix an isomorphism $\eta\colon \xi^*\wh{C}^{k,\chi}\simto \wh{C}^{\otimes k}\otimes \Zp[\chi]$, then for any $\sigma\in \znx\cong  \aut_{\Mello(\Gamma_0(N'))}(\Mello(p^v,\Gamma_1(N')))$, we have a commutative diagram of isomorphisms:
		\begin{equation*}
		\begin{tikzcd}
		\xi^*\wh{C}^{k,\chi}\dar["\eta"']\rar["\sigma\otimes 1"]& \sigma^*\xi^*\wh{C}^{k,\chi}\dar["\sigma^*\eta"']\rar[equal]&\xi^*\wh{C}^{k,\chi}\dar["\sigma^*\eta"']\\
		\wh{C}^{\otimes k}\otimes \Zp[\chi]\rar\ar[rr,"{[\chi(\sigma)]}",bend right=20,start anchor=south east, end anchor= south west]&\sigma^*(\wh{C}^{\otimes k}\otimes \Zp[\chi])\rar[equal]&\wh{C}^{\otimes k}\otimes \Zp[\chi]
		\end{tikzcd}
		\end{equation*}
		In this diagram, 
		\begin{itemize}
			\item The homomorphism $[\chi(\sigma)]$ is defined in \Cref{con:Formal_A-mod}.
			\item The isomorphism $\sigma^*\eta$ is the same as $\eta$ since $\znx$ acts on $(\Zp[\chi])^{\x}$ trivially.
			\item The correspondence between $\wh{C}^{k,\chi}$ and $\chi$ is independent of the choice of the isomorphism $\eta$, since the group $\aut_{\Zp[\chi]}(\wh{C}^{\otimes k}\otimes \Zp[\chi])=(\Zp[\chi])^\x$ is abelian.
		\end{itemize}		
	\end{const}
	Let $\bfomega^{k,\chi}=\bfomega(\wh{C}^{k,\chi})$ be the sheaf of invariant differentials of $\wh{C}^{k,\chi}$. The sheaf $\bfomega^{k,\chi}$ is  locally free finitely generated over $\Mello(\Gamma_0(N'))$, since it is the cotangent sheaf of a formal scheme that is \'etale locally isomorphic to $\wh{\mathbb{A}}^r$, where $r$ is the rank of $\Zp[\chi]$ as a $\Zp$-module.
	\begin{prop}\label{prop:omega_k_chi_cohomology}
		We have an isomorphism of locally free sheaves $\xi^*\bfomega^{k,\chi}\cong  \bfo{k}\otimes \Zp[\chi]$ over the stack $\Mello(p^v,\Gamma_1(N'))$. The sheaf cohomology of $\bfomega^{k,\chi}$ is computed as follows: 
		\begin{enumerate}
			\item For all integers $N>1$, we have
			\begin{equation}\label{eqn:omega_k_chi_descent}
				H^0(\Mello(p^v,\Gamma_1(N')),\bfo{k}\otimes\Zp[\chi])^{\chi^{-1}}\cong  H^0(\Mello(\Gamma_0(N')),\bfomega^{k,\chi}).
			\end{equation} 
			\item When $N>3$ or $N=3$ and $p\not\equiv 1\mod 3$, we have for all $s\ge 0$:
			\begin{equation*}
			H^s(\Mello(\Gamma_0(N')),\bfomega^{k,\chi})\cong  H^s(\znx;H^0(\Mello(p^v,\Gamma_1(N')),\bfo{k}\otimes\Zp[\chi])).
			\end{equation*}
			\item When $p\nmid\phi(N)=|\znx|$, we have for all $t\ge 0$:
			\begin{equation*}
			H^t(\Mello(\Gamma_0(N')),\bfomega^{k,\chi})\cong  H^t(\Mello(p^v,\Gamma_1(N')),\bfo{k}\otimes\Zp[\chi])^{\chi^{-1}}.
			\end{equation*}
			\item In particular, when $N$ and $p$ satisfy the conditions in (2) and (3), we further have:
			\begin{equation*}
			H^s(\Mello(\Gamma_0(N')),\bfomega^{k,\chi})=\left\{\begin{array}{cl}
			H^0(\Mello(p^v,\Gamma_1(N')),\bfo{k}\otimes\Zp[\chi])^{\chi^{-1}},&s=0;\\
			0,&\textup{otherwise.}\\ 
			\end{array}\right.
			\end{equation*}
		\end{enumerate}	
	\end{prop}
	\begin{proof}
		The functor $\bfomega$ is compatible with pullbacks, yielding
		\begin{equation*}
		\xi^*\bfomega^{k,\chi}=\xi^*\bfomega(\wh{C}^{k,\chi})\cong  \bfomega(\xi^*\wh{C}^{k,\chi})\cong  \bfomega(\wh{C}^{\otimes k}\otimes \Zp[\chi])=\bfo{k}\otimes \Zp[\chi].
		\end{equation*}		
		To compute $H^s(\Mello(\Gamma_0(N')),\bfomega^{k,\chi})$, we use the Hochschild-Serre spectral sequence \cite[Theorem 2.20]{Milne_etale_cohomology}:
		\begin{equation}\label{eqn:HSSS}
		E^{s,t}_2=H^s(\znx;H^t(\Mello(p^v,\Gamma_1(N')),\xi^*\bfomega^{k,\chi}))\Longrightarrow H^{s+t}(\Mello(\Gamma_0(N')),\bfomega^{k,\chi}),
		\end{equation}
		where $\sigma\in\znx$ acts on $\xi^*\bfomega^{k,\chi}\cong  \bfo{k}\otimes \Zp[\chi]$ by the Galois descent data $1\otimes \chi(\sigma)$. As the spectral sequence is concentrated in the first quadrant, its $E_2^{0,0}$-term receives or supports no differentials. This yields (1).
		
		By \Cref{prop:mu_3_rigid}, the stack $\Mello(p^v,\Gamma_1(N'))$ is a formal affine scheme when $N\ge 4$ or $N=3$ and $p\not\equiv 1\mod 3$. It follows that \eqref{eqn:HSSS} is concentrated in the $t=0$ line in those cases. As a result, the spectral sequence collapses on the $E_2$-page and we have proved (2).
		
		When $p\nmid\phi(N)=|\znx|$, the group cohomology of $\znx$ with coefficients in $\Zp$-modules vanishes in positive degrees. It follows that \eqref{eqn:HSSS} is concentrated in the $s=0$ line in this case and thus collapses on the $E_2$-page. This implies (3). 
		
		Case (4) is the intersection of cases (2) and (3). 
	\end{proof}
	\begin{rem}
		Note that $2$ is the only prime $p$ dividing $\phi(3)=2$. The spectral sequence \eqref{eqn:HSSS} collapses on the $E_2$-page for all $N\ge 3$ and $p$.
	\end{rem}
	Write $\Dbb(\wh{C}^{k,\chi})=(\bfomega^{k,\chi},F^{k,\chi}\colon \varphi^*\bfomega^{k,\chi}\simto \bfomega^{k,\chi})$. The Frobenius homomorphism $F^{k,\chi}$ of $\wh{C}^{k,\chi}$ descends from that of $\xi^*\wh{C}^{k,\chi}\cong  \wh{C}^{\otimes k}\otimes \Zp[\chi]$. \Cref{exmp:FG_ell} and \Cref{con:Formal_A-mod} yield
	\begin{equation*}
	\xi^*F^{k,\chi}=F^{\otimes k}\otimes 1\colon \varphi^*\bfo{k}\otimes\Zp[\chi]\simto \bfo{k}\otimes\Zp[\chi].
	\end{equation*}
	Notice $F^{\otimes k}\otimes 1$ commutes with the Galois descent data $1\otimes \chi(\sigma)$ for $\sigma\in\znx$, we have shown: 
	\begin{prop}[Step I] \label{prop: Eisenstein_Dieudonne}
		Let $f\in H^0(\Mello(p^v,\Gamma_1(N')),\bfo{k}\otimes\Zp[\chi])^{\chi^{-1}}\cong  H^0(\Mello(\Gamma_0(N')),\bfomega^{k,\chi})$ be a modular form. Then $F^{k,\chi}(f(q))=(F^{\otimes k}\otimes 1)(f(q))=f(q^p)$. Let $\Ical\trianglelefteq\Zp[\chi]$ be an ideal. The followings are equivalent:
		\begin{enumerate}[label=\textup{(\roman*).}]
			\item There is a modular form  $f\in H^0(\Mello(\Gamma_0(N')),\bfomega^{k,\chi})$ such that $f(q)\in 1+\Ical q\llb q\rrb$.
			\item There is a generator $\gamma\in H^0(\Mello(\Gamma_0(N')),\bfomega^{k,\chi})$ as an $H^0(\Mello(\Gamma_0(N')),\mathcal{O})\otimes \Zp[\chi]$-module such that $F^{k,\chi}(\gamma)\equiv \gamma\mod \Ical$.
		\end{enumerate}
	\end{prop}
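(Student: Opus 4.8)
The statement has two parts: the formula for $F^{k,\chi}$ on $q$-expansions, and the equivalence of (i) and (ii). The formula is immediate from the paragraph preceding the proposition together with \Cref{exmp:FG_ell}: pulling back along the $\znx$-torsor $\xi$ identifies $F^{k,\chi}$ with $F^{\otimes k}\otimes 1$ on $\bfo{k}\otimes\Zp[\chi]$, the factor $F^{\otimes k}$ acts on $q$-expansions by $q\mapsto q^p$ (ultimately by \Cref{exmp:D_Gmh} and \Cref{prop:can_subgp}(3)), and the second tensor factor is the identity. I would record this first, since it drives both directions. Throughout I will use that $\Mello(\Gamma_1(N'))$, hence $\Mello(\Gamma_0(N'))$ and its $\znx$-cover $\Mello(p^v,\Gamma_1(N'))$, is connected, so that the $q$-expansion map on $H^0(\bfomega^{k,\chi})$ is injective even after reduction modulo any ideal (\Cref{prop:q-exp}); and that $H^0(\Mello(\Gamma_0(N')),\bfomega^{k,\chi})$ is free of rank one over $H^0(\Mello(\Gamma_0(N')),\mathcal{O})\otimes\Zp[\chi]$ — the $\chi$-twisted level-$N'$ analogue of the Igusa-tower fact that every weight space is free of rank one over the weight-zero forms, which I would check directly.

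For (i)$\Rightarrow$(ii): let $\gamma\in H^0(\Mello(\Gamma_0(N')),\bfomega^{k,\chi})$ be the section corresponding to $f$ under \eqref{eqn:omega_k_chi_descent}, so that its $q$-expansion against the standard generator of $\bfomega^{k,\chi}$ at the chosen cusp — which $F^{k,\chi}$ fixes because $F$ fixes the canonical generator of $\Dbb(\Gmh)$ (\Cref{exmp:D_Gmh}) — equals $f(q)\in 1+\Ical q\llb q\rrb$, a unit power series. Since the $q$-expansion map is injective, $\gamma$ is a generator of the free rank-one module $H^0(\bfomega^{k,\chi})$. The $q$-expansion formula then gives that $F^{k,\chi}\gamma$ has $q$-expansion $f(q^p)\in 1+\Ical q^p\llb q^p\rrb\subseteq 1+\Ical q\llb q\rrb$, so $F^{k,\chi}\gamma-\gamma$ has $q$-expansion in $\Ical\llb q\rrb$ and hence vanishes modulo $\Ical$ by the $q$-expansion principle over $\Zp[\chi]/\Ical$; that is, $F^{k,\chi}\gamma\equiv\gamma\bmod\Ical$.

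For (ii)$\Rightarrow$(i): given a generator $\gamma$ with $F^{k,\chi}\gamma\equiv\gamma\bmod\Ical$, its $q$-expansion $g(q)$ is a unit of $\Zp[\chi]\llb q\rrb$, so after rescaling $\gamma$ by $g(0)^{-1}\in\Zp[\chi]^\times$ — which preserves being a generator and, by $\Zp[\chi]$-linearity of $F^{k,\chi}$, the congruence — we may assume $g(0)=1$. The $q$-expansion formula turns $F^{k,\chi}\gamma\equiv\gamma\bmod\Ical$ into $g(q^p)\equiv g(q)\bmod\Ical$; writing $g(q)=1+\sum_{m\ge 1}a_m q^m$ and comparing coefficients gives $a_m\in\Ical$ for $p\nmid m$ and $a_{pn}\equiv a_n\bmod\Ical$ for all $n$, whence induction on $v_p(m)$ forces $a_m\in\Ical$ for every $m\ge 1$, i.e. $g(q)\in 1+\Ical q\llb q\rrb$ for the modular form $g\in H^0(\Mello(p^v,\Gamma_1(N')),\bfo{k}\otimes\Zp[\chi])^{\chi^{-1}}$. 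It remains to replace $g$ by an \emph{Eisenstein} series with the same property: I would take its Eisenstein component $f$ (which has the same constant term $1$, since cusp forms vanish at the cusp) and argue that the cuspidal part $g-f$ is congruent to $0$ modulo $\Ical$ — since $F^{k,\chi}=\varphi^*$ preserves the cuspidal subspace (the Frobenius $C\mapsto C/H$ extends over the cusps and preserves vanishing there), the cuspidal part is again $F^{k,\chi}$-fixed modulo $\Ical$, and the coefficient recursion above, applied to a cusp form whose constant term vanishes, forces all of its coefficients, hence the cuspidal part itself, into $\Ical$.

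The last step of (ii)$\Rightarrow$(i) is where I expect the real difficulty. The decomposition $M_k=\Ecal_k\oplus\mathcal{S}_k$ is only manifestly $\varphi^*$- and Hecke-equivariant over $\Q(\chi)$, so extracting the Eisenstein component while remaining inside the $\Zp[\chi]$-lattice and controlling the congruence modulo $\Ical$ requires care; this is precisely where the ``numerator'' phenomena of the introduction (the case $|\imag\chi'|$ a power of $p$) enter. I would resolve it either by pinning down a $\varphi^*$-stable integral complement of the cuspidal lattice, or — more robustly — by producing the Eisenstein witness directly from the explicit basis of \Cref{thm:eisenstein_subspace_basis}, matching constant terms at all cusps via the $q$-expansion principle. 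The ancillary input that a generator exists at all — equivalently, freeness of $\Dbb(\wh{C}^{k,\chi})$ over $H^0(\mathcal{O})\otimes\Zp[\chi]$ — I would dispatch once as an Igusa-tower statement; the remaining $q$-expansion bookkeeping and uses of the $q$-expansion principle modulo $\Ical$ are routine.
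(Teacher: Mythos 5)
Your first half — recording $F^{k,\chi}(f(q))=f(q^p)$ via \Cref{exmp:FG_ell} and pullback along $\xi$, then deducing (i)$\Rightarrow$(ii) from the $q$-expansion principle and the observation that a form with unit $q$-expansion generates the rank-one module — is exactly what the paper does: the proposition is presented as a direct consequence of the identification \eqref{eqn:omega_k_chi_descent} and the paragraph establishing $\xi^*F^{k,\chi}=F^{\otimes k}\otimes 1$, with no further argument ("we have shown").

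Where you diverge is in taking seriously the asymmetry between (i), which asks for an \emph{Eisenstein} series, and (ii), which only asks for a generator of the full module of forms. The paper never explicitly addresses this in Step~I. Your (ii)$\Rightarrow$(i) argument — normalize the constant term, run the coefficient recursion forced by $g(q^p)\equiv g(q)\bmod\Ical$, then project to the Eisenstein component and kill the cuspidal part by the same recursion — is a genuine additional argument, and the gap you flag is real: the decomposition $M_k=\Ecal_k\oplus\mathcal{S}_k$ is only a priori defined after inverting enough primes, so the cuspidal projection of $g$ need not lie in $\Zp[\chi]\llb q\rrb$, and without that the congruence $h(q^p)\equiv h(q)\bmod\Ical$ for the cuspidal part $h$ does not typecheck. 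The paper resolves this only implicitly and only partially: the implication (v)$\Rightarrow$(i) of the Main Theorem is supplied by the explicit $E_{k,\chi}$ in Cases I--V of \Cref{thm:rep_congruence} (\Cref{prop:E_kchi_max_congruence}) and by ad hoc linear combinations in the remaining cases, which is your suggested alternative of producing the Eisenstein witness from \Cref{thm:eisenstein_subspace_basis} directly. So: your (i)$\Rightarrow$(ii) matches; your (ii)$\Rightarrow$(i) via cuspidal projection is an honest attempt at something the paper glosses over, but as written it does not go through without an integrality statement for the Eisenstein/cuspidal splitting that neither you nor the paper establishes.
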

	\begin{rem}
		There is no guarantee that the modular $f$ above is in the Eisenstein space $\Ecal_k(p^v,\Gamma_1(N'),\chi)$. 
	\end{rem}
	This concludes step I in \Cref{subsec:sketch_proof}.
	\subsection{Step II: From Dieudonn\'e modules to Galois representations}
	One major tool Katz used in \cite[Chapter 4]{padic} to explain the congruences of the normalized Eisenstein series $E_{2k}$ of level $1$ is a Riemann-Hilbert type correspondence. In this subsection, we reformulate the correspondence in terms of formal $A$-modules and their finite subgroup schemes, and then apply it to the formal $\Zp[\chi]$-module $\wh{C}^{k,\chi}$ over $\Mello(\Gamma_0(N'))$ we constructed in \Cref{con:gal_desc}. 
	
	Let $\kappa$ be a perfect field of characteristic $p$ containing $\mathbb{F}_q$ and $\W_m(\mathbb{F}_q)$ be the ring of Witt vectors of length $m$ on $\mathbb{F}_q$.  Let $S_m$ be a flat affine $\W_m(\kappa)$-scheme whose special fiber is normal, reduced, and irreducible. Assume $S_m$ is formally smooth, so that it admits an endomorphism $\varphi\colon  S_m\to S_m$ that lifts the $q$-th power map on $S_m/p$. Then Katz proved
	\begin{thm}\textup{\cite[Proposition 4.1.1, Remark 4.1.2.1]{padic}} \label{thm:padic41}\\
		There is an equivalence of closed symmetric monoidal categories:
		\begin{equation*}\left\{\begin{array}{c}
		\text{Finite locally free sheaves } \F \text{ on }S_m\\
		\text{with an isomorphism } F\colon \varphi^*\F\simto\F
		\end{array}\right\} \simeq \left\{\begin{array}{c}
		\text{Finite free }\W_m(\mathbb{F}_q)\text{-modules}\\
		\text{ with continuous }\piet(S_m)\text{-actions}
		\end{array}\right\}.\end{equation*}
	\end{thm}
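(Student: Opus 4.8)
The plan is to realize both sides as $\W_m(\mathbb{F}_q)$-linear categories of \'etale descent data over $S_m$ and to reduce the entire statement to one assertion: that the Frobenius structure can always be trivialized \'etale-locally. This is the finite-level analogue of the equivalence between unit-root $F$-isocrystals and $p$-adic local systems. Write $S_m=\spec R$. Two preliminary remarks. First, $\varphi$ acts as the identity on $\W_m(\mathbb{F}_q)\subseteq\W_m(\kappa)\subseteq R$: it fixes the \Teichmuller lifts of $\mathbb{F}_q^\times$, since it reduces to the identity on $\mathbb{F}_q\subseteq R/p$ and $(q-1)$-st roots of unity lift uniquely along $R\to R/p$. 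Second, $R^{\varphi=1}=\W_m(\mathbb{F}_q)$: the $q$-Frobenius fixed points of the integral domain $R/p\supseteq\mathbb{F}_q$ are exactly $\mathbb{F}_q$ (the roots of $X^q-X$), and one lifts this identity successively modulo $p^2,\dots,p^m$ using flatness of $R$ over $\W_m(\kappa)$. Next, unwind the left-hand data: a pair $(\F,F)$ is the same as a finite projective $R$-module $M$ equipped with a $\varphi$-semilinear operator $\Phi:=F\circ\mathrm{can}\colon M\to M$ whose linearization $\varphi^*M\to M$ is an isomorphism; call such a pair a unit-root $\varphi$-module. Define the two functors: to a continuous $\piet(S_m)$-representation on a finite free $\W_m(\mathbb{F}_q)$-module $T$ assign $M_T:=\bigl(T\otimes_{\W_m(\mathbb{F}_q)}\mathcal{O}_{\widetilde S}\bigr)^{\piet(S_m)}$ with $\Phi=1\otimes\varphi$, where $\widetilde S\to S_m$ is the pro-universal finite \'etale cover; to a unit-root $(M,\Phi)$ assign the \'etale sheaf $T_M$ of $\Phi$-fixed sections of $M$.

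The crux is: \emph{every unit-root $\varphi$-module $(M,\Phi)$ over $R$ acquires an $R'$-basis of $\Phi$-fixed vectors after some finite \'etale base change $R\to R'$.} Granting this, $T_M$ is a locally constant sheaf of free $\W_m(\mathbb{F}_q)$-modules whose rank equals that of $M$, and the two functors are mutually quasi-inverse: the canonical comparison maps become isomorphisms after a trivializing \'etale base change, where the pair is the split one $(\mathcal{O}^n,\varphi^{\oplus n})$ and the sheaf is constant, and one uses that $\Phi-1$ is surjective on $\mathcal{O}$ in the \'etale topology. The functor $(M,\Phi)\mapsto T_M$ is symmetric monoidal, since $(M_1,\Phi_1)\otimes_R(M_2,\Phi_2)$ corresponds to $T_{M_1}\otimes_{\W_m(\mathbb{F}_q)}T_{M_2}$ and the units $(\mathcal{O},\varphi)$ and $\underline{\W_m(\mathbb{F}_q)}$ match. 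Both categories are moreover closed, with internal homs given by the dual tensored with the target (duals exist as all objects are finite projective/free, $\Phi$ being replaced by $(\Phi^\vee)^{-1}$); hence a symmetric monoidal equivalence between them is automatically an equivalence of closed symmetric monoidal categories.

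To prove the trivialization I would argue in two stages. \emph{Modulo $p$, via a Lang torsor.} An $R$-basis of $M$ turns $\Phi$ into a matrix $A\in\GL_n(R)$, and a $\Phi$-fixed basis over $R'$ amounts to a matrix $B\in\GL_n(R')$ with $\varphi(B)=A^{-1}B$. Reduced modulo $p$ this is the twisted Lang equation $B^{(q)}=\bar A^{-1}B$ over $S_1:=S_m/p$; its solution locus inside $\GL_{n,S_1}$ is, in the \'etale topology, a torsor under $\GL_n(\mathbb{F}_q)$ by Lang's theorem (over the $\mathbb{F}_q$-scheme $S_1$, whose Frobenius is the one $\varphi$ lifts), so there is a finite \'etale cover $S_1'\to S_1$ trivializing $\bar\Phi$. \emph{Lifting up the Witt tower.} By topological invariance of the \'etale site, $S_1'$ lifts uniquely to a finite \'etale $R\to R'$, which remains flat over $\W_m(\kappa)$ with integral special fibre and, being \'etale over $R$, carries a unique Frobenius lift extending $\varphi$. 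One then lifts the mod-$p$ solution successively to $R'/p^2,\dots,R'/p^m=R'$; the obstruction and the indeterminacy at each stage lie in the cohomology of a coherent sheaf on the affine scheme $S_1'$ killed by $p$, made to vanish after a further finite \'etale localization --- this is the Artin--Schreier--Witt surjectivity of $\Phi-1$ on $\mathcal{O}$ in the \'etale topology.

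The main obstacle is this trivialization lemma, and within it the passage from $S_1$ to $S_m$. Lang's theorem settles the mod-$p$ layer cleanly, but propagating a solution up the truncated Witt tower is where the hypotheses earn their keep: formal smoothness of $S_m$ is used to lift the trivializing cover with a compatible Frobenius and to force the successive lifting obstructions to die after an \'etale localization, while the normal and irreducible hypothesis on the special fibre enters through $R^{\varphi=1}=\W_m(\mathbb{F}_q)$, which pins down the coefficient ring. Everything downstream --- matching of units, quasi-inverseness, and the monoidal and closed structure --- is then formal.
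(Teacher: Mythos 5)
The paper does not prove this theorem; it is imported verbatim from Katz \cite[Proposition 4.1.1, Remark 4.1.2.1]{padic}, so there is no ``paper's own proof'' to compare against. What you have produced is a genuine proof of the cited result, and it follows the same route Katz himself takes: reduce the statement to the lemma that a unit-root $\varphi$-module can be trivialized after a finite \'etale base change, establish the mod-$p$ case as a Lang torsor over the special fibre, and then climb the truncated Witt tower using the unique Frobenius lift along an \'etale extension together with Artin--Schreier--Witt surjectivity of $\Phi-1$. The reductions you do beforehand (that $\varphi$ fixes $\W_m(\mathbb{F}_q)$, that $R^{\varphi=1}=\W_m(\mathbb{F}_q)$ via induction on the $p$-adic filtration using flatness and integrality of $R/p$, and that a symmetric monoidal equivalence of rigid categories is automatically closed) are all correct and are exactly the places where the hypotheses on $S_m$ enter.

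Two small inaccuracies worth flagging, neither fatal. First, a finite \'etale cover $S_1'\to S_1$ of an integral scheme need not be integral (it can be disconnected), so ``with integral special fibre'' is an overstatement; you should either pass to a connected component or simply observe that integrality is no longer needed once the Lang torsor is in hand. Second, the obstruction to lifting a $\Phi$-fixed basis from $R'/p^k$ to $R'/p^{k+1}$ does not live in coherent cohomology of $S_1'$; after trivializing mod $p$ it is a class in the cokernel of the additive Artin--Schreier operator $x\mapsto x^q-x$ on $\mathcal{O}_{S_1'}$ (equivalently in $H^1_{\text{\'et}}(S_1',\mathbb{F}_q)$), which is what dies after the further finite \'etale cover $x^q-x=a$. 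With those two phrasings tightened, the argument is complete and is the same mechanism underlying Katz's Proposition 4.1.1.
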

	\begin{prop}\textup{\cite[Remark 5.5]{Katz_Dwork}}\label{thm:padic41Wk}
		\Cref{thm:padic41} holds for affine formal schemes $S$ over $\W(\kappa)$ under the same assumption. That is, there is an equivalence of closed symmetric monoidal categories:
		\begin{equation*}\left\{\begin{array}{c}
		\text{Finite locally free sheaves } \F \text{ on }S\\
		\text{with an isomorphism } F\colon \varphi^*\F\simto\F
		\end{array}\right\} \simeq \left\{\begin{array}{c}
		\text{Finite free }\W(\mathbb{F}_q)\text{-modules}\\
		\text{ with continuous }\piet(S)\text{-actions}
		\end{array}\right\}.\end{equation*}
	\end{prop}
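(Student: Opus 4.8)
The statement to prove is \Cref{thm:padic41Wk}, which asserts that Katz's Riemann--Hilbert correspondence (\Cref{thm:padic41}), originally stated for the truncated bases $S_m$ over $\W_m(\kappa)$, extends to an affine formal scheme $S$ over $\W(\kappa)$ under the same hypotheses (flat, special fiber normal reduced irreducible, formally smooth with a lift $\varphi$ of the $q$-power Frobenius). The plan is to pass to the limit over the truncations and check that nothing is lost.

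The plan is as follows. First I would set $S = \spf R$ with $R = \varprojlim_m R/p^m R$, and let $S_m = \spec(R/p^m R)$, which is a flat affine $\W_m(\kappa)$-scheme; its special fiber $S_1 = S_m \times_{\W_m} \kappa$ agrees with that of $S$ and is therefore normal, reduced, and irreducible. The lift of Frobenius $\varphi$ on $R$ descends to each $R/p^m R$, so \Cref{thm:padic41} applies to every $S_m$, giving a compatible system of equivalences of closed symmetric monoidal categories indexed by $m$, the transition functors being reduction mod $p^m$ on the sheaf side and $\W(\F_q) \to \W_m(\F_q)$ reduction on the module side. Second, I would define the functor in the formal case by the limit: to a finite locally free sheaf $\F$ on $S$ with $F : \varphi^*\F \simto \F$ one associates $\varprojlim_m M_m$, where $M_m$ is the finite free $\W_m(\F_q)$-module attached to $(\F/p^m\F, F \bmod p^m)$ on $S_m$. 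Since $\F$ is finite locally free and $R$ is $p$-complete, $\F = \varprojlim_m \F/p^m\F$, so this limit recovers $\F$ functorially; dually, a finite free $\W(\F_q)$-module $M$ with continuous $\piet(S)$-action has $M = \varprojlim_m M/p^m M$ with compatible $\piet(S_m)$-actions (using $\piet(S) = \piet(S_1) = \piet(S_m)$, the identifications coming from $p$ being topologically nilpotent), so the inverse functor is also obtained by the same limit.

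The main things to verify, and where I expect the real work to lie, are: (a) that the limit of the module-valued functors lands in \emph{finite free} $\W(\F_q)$-modules rather than merely pro-finite ones --- this follows because the ranks stabilize (the rank of $M_m$ over $\W_m(\F_q)$ is locally constant on $S_1$, hence constant by irreducibility, and independent of $m$ since $M_{m+1} \otimes_{\W_{m+1}} \W_m \simeq M_m$), so $\varprojlim_m M_m$ is finite free of that rank over $\W(\F_q) = \varprojlim \W_m(\F_q)$; (b) that $\varprojlim$ preserves full faithfulness, which reduces to the Mittag--Leffler condition for the relevant $\Hom$-groups --- the transition maps $\Hom_m \to \Hom_{m-1}$ are surjective because each $M_m$ is free, so $R^1\varprojlim$ vanishes and $\Hom$ in the limit category computes correctly; and (c) that the tensor structure, unit, and duality are all compatible with the limit, which is formal once (a) and (b) are in place since each $M_m$ is free and tensor/dual commute with the (finite-stage) reductions. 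The one genuine subtlety is the interplay between "formally smooth" for the formal scheme $S$ and for each $S_m$ --- one must note that formal smoothness of $R$ over $\W(\kappa)$ gives formal smoothness of $R/p^m R$ over $\W_m(\kappa)$ and a compatible choice of $\varphi \bmod p^m$, so Katz's hypotheses really do hold uniformly in $m$. Granting all of this, the equivalence in the formal case is the inverse limit of the equivalences in the truncated cases, and this is exactly the content of \cite[Remark 5.5]{Katz_Dwork}, to which one may also simply appeal.

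In short, the proof is a routine but slightly delicate $\varprojlim$-argument; the only step requiring care is controlling the limit so that finiteness and freeness are preserved and $R^1\varprojlim$ terms vanish, and this is guaranteed by the freeness of the modules at each finite stage together with irreducibility of the special fiber to pin down the rank.
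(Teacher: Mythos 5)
The paper gives no proof of this Proposition; it simply cites Katz's Remark 5.5 and takes the extension to the formal case as established there. Your $\varprojlim$ argument is therefore not a different route from the paper so much as a plausible reconstruction of what is behind that citation, and in outline it is the right one: reduce to the truncated levels $S_m$, where \Cref{thm:padic41} applies, verify that the hypotheses (flatness, normality/reducedness/irreducibility of the special fiber, formal smoothness, the lift $\varphi$) are inherited by each $R/p^mR$, and then pass to the limit using $p$-completeness on the sheaf side and $\W(\F_q)=\varprojlim \W_m(\F_q)$ with $\piet(S)\simeq\piet(S_m)\simeq\piet(S_1)$ (since $p$ is topologically nilpotent) on the representation side.

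Two small points in your step (b) deserve tightening. First, the $\Hom$ groups in the target category are the \emph{equivariant} $\Hom_{\piet(S_m)}(M_m,N_m)$, not bare module $\Hom$s, so the lifting-along-a-free-basis argument does not by itself give surjectivity of the transition maps. You do not actually need surjectivity, though: each $\Hom_{\W_m(\F_q)}(M_m,N_m)$ is a finite set, so every inverse system built out of subgroups of these automatically satisfies Mittag--Leffler and $R^1\varprojlim$ vanishes for free. Second, your argument tacitly assumes that Katz's equivalences at consecutive levels are compatible with the reduction functors $\F\mapsto\F/p^{m}\F$ and $M\mapsto M/p^{m}M$; this compatibility is indeed part of the content of \cite[Proposition 4.1.1]{padic} (the construction there is functorial in the base), but it is worth flagging explicitly since the whole limit argument hinges on it. With those two repairs your sketch is correct, and it matches the intent of the citation in the paper.
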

	This equivalence of Katz is essentially an equivalence of Dieudonn\'{e} module and Galois descent data of a formal group and its finite subgroups. Let $A$ be a $\Zp$-algebra that is finite free as a $\Zp$-module and $\Gh$ be formal $A$-module of height $1$. Let $\Ical\trianglelefteq A$ be an ideal. 
	\begin{defn}
		Define $\Gh[\Ical]$ to be the kernel of all the endomorphisms in $\Ical\trianglelefteq A\hookrightarrow \mathrm{End}(\Gh)$. If $\Gh=\spf R\llb t\rrb$ has a coordinate, then $\Gh[\Ical]=\spf R\llb t \rrb\left/([a](t)\mid a\in \Ical)\right.$ as a finite flat scheme. When $\Ical=(a)$ is a principal ideal, $\Gh[\Ical]=\Gh[a]=\spf R\llb t\rrb/([a](t))$.
	\end{defn}
	\begin{prop}\label{prop:fin_gp_d-mod}
		Let $\Gh$ be a formal $A$-module. Write the Dieudonn\'e module of $\Gh$ as $\Dbb(\Gh)=(M,F,V)$. Then $M$ has an $A$-module structure and the homomorphisms $F$ and $V$ are $A$-linear. The Dieudonn\'{e} module of $\Gh[\Ical]$ is $\Dbb(\Gh)/\Ical=(M/\Ical M,F\colon  \varphi^*(M/\Ical M)\to M/\Ical M,V\colon M/\Ical M\to\varphi^*(M/\Ical M))$.
	\end{prop}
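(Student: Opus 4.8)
The plan is to prove \Cref{prop:fin_gp_d-mod} by combining the definition of a formal $A$-module with the functoriality and exactness properties of the Dieudonné module functor $\Dbb(-)$. First I would establish the $A$-module structure on $M$: by definition of a formal $A$-module, the algebra embedding $i:A\hookrightarrow \mathrm{End}_{FG}(\Gh)$ composed with $\Dbb(-)$ gives an algebra map $A\to \mathrm{End}(\Dbb(\Gh))$, i.e. $A$ acts on the triple $(M,F,V)$ by Dieudonné-module endomorphisms. Since an endomorphism of a Dieudonné module is by definition an $R$-linear (resp.\ $\varphi^*R$-linear) map $M\to M$ commuting with $F$ and $V$, this simultaneously endows $M$ with an $A$-module structure (refining its $R$-module structure, compatibly with the $A$-module structure already imposed on $\bfomega(\Gh)=PH^1_{\dR}$ in the definition) and shows that $F$ and $V$ are $A$-linear. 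For the height/slope $1$ case that we actually use, $M=\bfomega(\Gh)$ and there is nothing further to check.

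Next I would identify $\Dbb(\Gh[\Ical])$. The key input is that the Dieudonné functor is contravariant and exact on finite commutative group schemes (equivalently $p$-divisible-group-theoretic exactness together with the snake lemma), so it sends a cokernel to a kernel and vice versa. I would write $\Gh[\Ical]$ as the scheme-theoretic intersection $\bigcap_{a\in\Ical}\Gh[a]$, or, choosing generators $a_1,\dots,a_n$ of $\Ical$, as the kernel of the map $[a_1]\times\cdots\times[a_n]:\Gh\to\Gh^{\oplus n}$. Applying $\Dbb(-)$ and using that $\Dbb(\Gh^{\oplus n})=\Dbb(\Gh)^{\oplus n}=M^{\oplus n}$, the kernel becomes the cokernel of $(F\text{-compatible})$ map $(a_1,\dots,a_n)^{\mathrm T}:M\to M^{\oplus n}$ — wait, one must be careful about variance: $\Dbb$ turns the inclusion $\Gh[\Ical]\hookrightarrow\Gh$ into a surjection $M\twoheadrightarrow\Dbb(\Gh[\Ical])$, and the n-fold map becomes $M^{\oplus n}\to M$ sending $(m_1,\dots,m_n)$ to $\sum a_i m_i$, whose image is exactly $\Ical M$. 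Hence $\Dbb(\Gh[\Ical])=M/\Ical M$ as an $R$-module, with $F$ and $V$ the maps induced on the quotient (well-defined precisely because $F,V$ are $A$-linear, so preserve $\Ical M$). I would spell this out cleanly in the principal-ideal case $\Ical=(a)$ first, where $\Gh[a]$ is the cokernel/kernel of $[a]$ and $\Dbb([a])$ is multiplication by $a$ on $M$, giving $M/aM$ directly, and then pass to the general case by the above.

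The step I expect to be the main obstacle is making the exactness/variance bookkeeping of the Dieudonné functor precise in this mixed setting: $\Dbb(-)$ is defined here via de Rham cohomology of a lift, and to talk about $\Dbb(\Gh[\Ical])$ for a finite flat group scheme one needs the compatibility of this de Rham description with the classical (crystalline) Dieudonné theory of finite group schemes, together with its exactness. I would either cite the standard compatibility (e.g.\ from \cite{Demazure_p-div}, as the excerpt already references for the general theory) and the fact that over the base rings under consideration the relevant category of finite flat group schemes is abelian with $\Dbb$ exact, or, alternatively, give a direct argument: choose a coordinate so $\Gh=\spf R\llb t\rrb$, note $\Gh[\Ical]=\spf R\llb t\rrb/([a]:a\in\Ical)$ is finite flat, and compute $\Dbb(\Gh[\Ical])$ from its Hopf-algebra/Cartier-dual presentation, checking that the result is $M/\Ical M$ with the induced $F,V$. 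The remaining verifications — that $F$ and $V$ descend to the quotient and still satisfy $FV=p=VF$ on $\varphi^*(M/\Ical M)$ and $M/\Ical M$ — are then immediate from $A$-linearity and reduction mod $\Ical$.
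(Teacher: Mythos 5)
The paper states \Cref{prop:fin_gp_d-mod} without proof (neither it nor \Cref{prop:fin_gp_gal_desc} is followed by a \emph{proof} environment), so there is no proof in the source to compare against; I can only assess your argument on its own terms. Your argument is the natural one and is, in outline, correct. The $A$-module structure on $M$ and the $A$-linearity of $F$ and $V$ both follow at once from functoriality: the embedding $i:A\hookrightarrow\mathrm{End}_{FG}(\Gh)$ lands, after applying $\Dbb$, in $\mathrm{End}(\Dbb(\Gh))$, and an endomorphism of a Dieudonn\'e module is by definition a map of $M$ that commutes with $F$ and $V$. Your principal-ideal computation is the right warm-up: from $0\to\Gh[a]\to\Gh\xrightarrow{[a]}\Gh\to 0$ and the contravariant exact $\Dbb$ (note $\Dbb$ is contravariant here, since $M=PH^1_{\dR}$), one gets $0\to M\xrightarrow{a}M\to\Dbb(\Gh[a])\to 0$, i.e.\ $\Dbb(\Gh[a])=M/aM$ with the induced $F,V$, and the general $\Ical=(a_1,\dots,a_n)$ case follows either by the $n$-fold kernel presentation you give or by iterating the principal case.

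Two remarks on where care is genuinely needed, both of which you already flagged. First, the variance bookkeeping: your mid-sentence correction lands in the right place — for a left-exact sequence $0\to\Gh[\Ical]\to\Gh\to\Gh^{\oplus n}$ the contravariant exact $\Dbb$ yields $M^{\oplus n}\to M\to\Dbb(\Gh[\Ical])\to 0$, with the first map $(m_i)\mapsto\sum a_im_i$ and image $\Ical M$, so $\Dbb(\Gh[\Ical])\simeq M/\Ical M$. Second, the substantive input is exactness of $\Dbb$ on finite flat (sub)group schemes and the compatibility of the de Rham description used in the paper with the classical crystalline Dieudonn\'e theory of finite group schemes. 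This is a theorem over bases of the type considered (the paper itself invokes de Jong's equivalence \textup{\cite[Main Theorem 1]{de_Jong_crystalline_Dieudonne}} in the proof of \Cref{thm:rhc_formal_A-mod}, and that is the right citation to lean on here as well). With those references in place, your proof is complete; the paper apparently regards the statement as standard enough not to warrant one.
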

	\begin{prop}\label{prop:fin_gp_gal_desc}
		Let $\Gh$ be a formal $A$-module over $R$ that is isomorphic to $\Gh'$ over the separable closure $R^{\sep}$ of $R$. Let the cohomology class $[\rho]\in H_c^1(\piet(R);\aut_{A}(\Gh'))$ be the Galois descent data for $\Gh$. $[\rho]$ is represented by some crossed homomorphism $\rho\colon  \piet(R)\to\aut_{A}(\Gh')$. Then the Galois descent data for the finite flat group scheme $\Gh[\Ical]$ is represented by the crossed homomorphism:
		\begin{equation*}
		\rho_{\Ical}\colon  \piet(R)\xrightarrow{\rho}\aut_{A}(\Gh')\longrightarrow \aut(\Gh'[\Ical]),
		\end{equation*}
		where the last map $\aut_{A}(\Gh')\longrightarrow \aut(\Gh'[\Ical])$ is the restriction of the quotient map to units \begin{equation*}
		\begin{tikzcd}
		\mathrm{End}_A(\Gh')\rar[twoheadrightarrow]& \mathrm{End}_A(\Gh')/(\Ical\otimes_A \mathrm{End}_A(\Gh')) \cong  \mathrm{End}_A(\Gh'[\Ical])
		\end{tikzcd}
		\end{equation*} 
	\end{prop}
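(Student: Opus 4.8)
The plan is to exploit that the $\Ical$-torsion construction $\Gh\mapsto\Gh[\Ical]$ is a functor compatible with base change, so that a descent cocycle for $\Gh$ transports verbatim to one for $\Gh[\Ical]$. First I would recall the cocycle description of the descent data: having fixed an isomorphism of formal $A$-modules $\phi:\Gh\otimes_R R^{\sep}\simto\Gh'\otimes_R R^{\sep}$, the class $[\rho]$ is represented by the crossed homomorphism $\sigma\mapsto\rho(\sigma):=(\sigma^*\phi)\circ\phi^{-1}$, which takes values in $\aut_A(\Gh'\otimes_R R^{\sep})$; in the height $1$ case $\Gh'=\Gamma_1\otimes A$ acquires no new automorphisms over $R^{\sep}$, so this group is $\aut_A(\Gh')$.

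Next I would verify that $(-)[\Ical]$ commutes with the base change $R\to R^{\sep}$. Since $\Gh[\Ical]$ is finite flat over $R$, with coordinate description $\Gh[\Ical]=\spf R\llb t\rrb/([a](t)\mid a\in\Ical)$, this reduces to the fact that forming a quotient by an ideal commutes with the flat (indeed ind-\'etale) map $R\to R^{\sep}$. Hence $\phi$ induces an isomorphism $\phi[\Ical]:\Gh[\Ical]\otimes_R R^{\sep}\simto\Gh'[\Ical]\otimes_R R^{\sep}$ with $\sigma^*(\phi[\Ical])=(\sigma^*\phi)[\Ical]$ for all $\sigma\in\piet(R)$, and the descent cocycle for $\Gh[\Ical]$ attached to $\phi[\Ical]$ is
\begin{equation*}
\sigma\longmapsto(\sigma^*(\phi[\Ical]))\circ(\phi[\Ical])^{-1}=\big((\sigma^*\phi)\circ\phi^{-1}\big)[\Ical]=\rho(\sigma)[\Ical].
\end{equation*}
This is exactly $\rho$ followed by the $\piet(R)$-equivariant group homomorphism $\aut_A(\Gh')\to\aut(\Gh'[\Ical])$ induced by the functor $(-)[\Ical]$; hence $\rho_{\Ical}$ is again a crossed homomorphism, and it represents the descent data of $\Gh[\Ical]$.

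It then remains to identify this induced homomorphism with the restriction to units of the quotient $\mathrm{End}_A(\Gh')\twoheadrightarrow\mathrm{End}_A(\Gh')/(\Ical\otimes_A\mathrm{End}_A(\Gh'))\simeq\mathrm{End}_A(\Gh'[\Ical])$. The functor $(-)[\Ical]$ gives a ring homomorphism on endomorphism rings, and by \Cref{prop:fin_gp_d-mod} (together with full faithfulness of the Dieudonn\'e functor in the height $1$ setting) this homomorphism is identified with the reduction $\mathrm{End}_A(M)\to\mathrm{End}_A(M/\Ical M)$, where $\Dbb(\Gh')=(M,F,V)$. Its kernel is $\{\psi\mid\psi(M)\subseteq\Ical M\}=\Ical\cdot\mathrm{End}_A(M)=\Ical\otimes_A\mathrm{End}_A(\Gh')$, using $A$-flatness of $\mathrm{End}_A(\Gh')$ in the height $1$ case (indeed $\mathrm{End}_A(\Gamma_1\otimes A)=A$), while surjectivity of the reduction follows from projectivity of $M$ over $R\otimes A$; restricting to units then gives the asserted description of $\rho_{\Ical}$.

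The hard part will be this last step: one must know that $(-)[\Ical]$ is additive on endomorphisms and that the induced reduction on endomorphism rings has exactly the stated kernel and is surjective, which is precisely where the height $1$ hypothesis is needed, via the transparent structure of $\Dbb(\Gh')$ and of $\mathrm{End}_A(\Gh')$. The functoriality and base-change steps are, by contrast, routine once a coordinate on $\Gh$ has been chosen.
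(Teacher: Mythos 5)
The paper states \Cref{prop:fin_gp_gal_desc} (and the companion \Cref{prop:fin_gp_d-mod}) without a proof, treating both as routine consequences of functoriality; so there is no ``paper proof'' to compare against. Your argument is a correct way to fill the gap, and it follows the natural line of reasoning: fix a trivializing isomorphism $\phi$ over $R^{\sep}$, note that $(-)[\Ical]$ is a finite-limit construction and therefore commutes with the flat base change $R\to R^{\sep}$ and with the Galois action, compute the resulting cocycle $\sigma\mapsto\rho(\sigma)[\Ical]$ by functoriality, and then identify the induced map $\aut_A(\Gh')\to\aut(\Gh'[\Ical])$ with reduction modulo $\Ical$. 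One small remark on the last step: the detour through ``projectivity of $M$ over $R\otimes A$'' and a general kernel computation can be avoided, since in the height $1$ setting one has $\mathrm{End}_A(\Gh')=\mathrm{End}_A(\Gamma_1\otimes A)=A$ over the separable closure; the induced ring map is then literally the quotient $A\twoheadrightarrow A/\Ical\simeq\mathrm{End}_A(\Gh'[\Ical])$, which is visibly surjective with kernel $\Ical=\Ical\otimes_A\mathrm{End}_A(\Gh')$. Your overall approach is sound and would be an appropriate proof to include.
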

	In the view of \Cref{prop:fin_gp_d-mod} and \Cref{prop:fin_gp_gal_desc}, Katz's Riemann-Hilbert correspondence (\Cref{thm:padic41}) can be generalized as:
	\begin{thm}\label{thm:rhc_formal_A-mod}
		Let $\Gh$ be a formal $A$-module of height $1$ over $R$, where $\spf R$ satisfies the same assumptions as in \Cref{thm:padic41}. Let $\Dbb(\Gh)=(M,F\colon \varphi^*M\simto M)$ and $\rho\colon \piet(R)\to A^\x$ be the Dieudonn\'e module and Galois descent data for $\Gh$, respectively. Then the followings are equivalent:
		\begin{enumerate}
			\item There is a generator $\gamma$ of $M$ as an $R\otimes A$-module such that $F\gamma\equiv \gamma\mod \Ical$.
			\item $\Gh[\Ical]\cong  (\Gmh\otimes A)[\Ical]$.
			\item The composition homomorphism $\rho_{\Ical}\colon \piet(R)\xrightarrow{\rho}A^\x\twoheadrightarrow (A/\Ical)^\x$ is trivial.
		\end{enumerate} 
	\end{thm}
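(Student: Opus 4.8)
The plan is to prove the cycle of implications $(1)\Rightarrow(2)\Rightarrow(3)\Rightarrow(1)$, using \Cref{thm:padic41Wk} (and its finite-level versions via reduction mod $p^m$) together with \Cref{prop:fin_gp_d-mod} and \Cref{prop:fin_gp_gal_desc} to translate between the three languages: Dieudonn\'e modules with Frobenius, finite flat group schemes, and Galois representations. The key observation that makes everything go is \Cref{exmp:D_Gmh} and \Cref{con:Formal_A-mod}: the Dieudonn\'e module of $\Gmh\otimes A$ is $(R\otimes A,\,1\otimes 1)$, i.e. a free rank-one $R\otimes A$-module with a canonical $F$-fixed generator; its Galois descent data is the trivial homomorphism; and the same remains true after passing to the $\Ical$-torsion, where the Dieudonn\'e module becomes $(R\otimes A/\Ical,\,1)$ by \Cref{prop:fin_gp_d-mod}.

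First, $(1)\Rightarrow(2)$: given a generator $\gamma$ of $M$ as an $R\otimes A$-module with $F\gamma\equiv\gamma\bmod\Ical$, form $\Dbb(\Gh)/\Ical=(M/\Ical M, F)$, which by $(1)$ is a free rank-one $R\otimes(A/\Ical)$-module with an $F$-fixed generator $\bar\gamma$. By \Cref{prop:fin_gp_d-mod} this is the Dieudonn\'e module of $\Gh[\Ical]$; since $(\Gmh\otimes A)[\Ical]$ has the isomorphic Dieudonn\'e module $(R\otimes(A/\Ical),1)$, the equivalence of categories in \Cref{thm:padic41Wk} (applied with the module $M/\Ical M$, noting $A/\Ical$ is a finite $\Zp$-algebra so this is a finite free $\W(\mathbb F_q)$-module-valued situation after suitable base change, or more directly using that over a formal affine $S$ the functor is fully faithful) forces $\Gh[\Ical]\simeq(\Gmh\otimes A)[\Ical]$ as finite flat group schemes. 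Second, $(2)\Rightarrow(3)$: by \Cref{prop:fin_gp_gal_desc} the Galois descent data of $\Gh[\Ical]$ is the composite $\rho_\Ical:\piet(R)\xrightarrow{\rho}A^\times\twoheadrightarrow(A/\Ical)^\times\hookrightarrow\aut(\Gmh\otimes A)[\Ical])$; isomorphism with $(\Gmh\otimes A)[\Ical]$, whose descent data is trivial, means this class is trivial in $H^1_c$, and since $\piet(R)$ acts trivially on $(A/\Ical)^\times$ this $H^1$ is just $\hom(\piet(R),(A/\Ical)^\times)$, so $\rho_\Ical$ is literally the trivial homomorphism. Third, $(3)\Rightarrow(1)$: if $\rho_\Ical$ is trivial, then via the equivalence in \Cref{thm:padic41Wk} the Galois representation $M/\Ical M$ with its $\piet(R)$-action is trivial, hence the corresponding $(\F,F)$ is isomorphic to the unit object $(R\otimes(A/\Ical),1)$; unwinding, $M/\Ical M$ admits a generator fixed by $F$, and lifting this generator to $M$ (possible since $M$ is free over $R\otimes A$ and $\Ical M\subseteq\mathrm{rad}$ is irrelevant — any lift of a generator of $M/\Ical M$ is a generator of $M$ by Nakayama, as $R\otimes A$ is complete along $\Ical$ or one reduces mod each $p^m$) gives $\gamma$ with $F\gamma\equiv\gamma\bmod\Ical$.

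The main obstacle I expect is bookkeeping around the coefficient ring: \Cref{thm:padic41} and \Cref{thm:padic41Wk} are stated for $\W_m(\mathbb F_q)$- and $\W(\mathbb F_q)$-module coefficients, whereas here the relevant module $M/\Ical M$ is a module over the possibly-non-Witt-ring $A/\Ical$. The resolution is to forget the $A$-structure and view $M/\Ical M$ merely as a finite locally free $\mathcal O_S$-module with $F$-isomorphism (it is finite free over $\Zp$, or over $\W(\mathbb F_q)$ after enlarging the residue field to split $A$), apply the equivalence there, and then observe that the $A/\Ical$-action and its compatibility with $F$ transport across the equivalence because \Cref{thm:padic41Wk} is monoidal and $A$-actions are encoded as algebra maps into endomorphism objects — so an $A$-linear $F$-isomorphism on the sheaf side corresponds to an $A$-linear $\piet$-equivariant structure on the module side, and being isomorphic to the unit object is detected after forgetting to $\Zp$-coefficients. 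A secondary point to handle carefully is that "trivial mod $\Ical$" on the group-scheme side must be taken to mean isomorphic to $(\Gmh\otimes A)[\Ical]$ as a group scheme \emph{with} its $A/\Ical$-action (equivalently as a formal $A$-module's torsion), which is exactly what \Cref{prop:fin_gp_gal_desc} records; with that convention the three conditions match on the nose.
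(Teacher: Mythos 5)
Your proposal is correct in outline and uses the same three auxiliary facts (\Cref{exmp:D_Gmh}, \Cref{prop:fin_gp_d-mod}, \Cref{prop:fin_gp_gal_desc}) that the paper does, but you route the argument through Katz's Riemann--Hilbert correspondence (\Cref{thm:padic41Wk}) at a step where the paper instead invokes de Jong's crystalline Dieudonn\'e equivalence, and this misdirection creates a small gap. In the step $(1)\Rightarrow(2)$, you need to pass from an isomorphism of Dieudonn\'e modules $\Dbb(\Gh[\Ical])\simeq(R\otimes A/\Ical,1)=\Dbb((\Gmh\otimes A)[\Ical])$ to an isomorphism of finite flat group schemes $\Gh[\Ical]\simeq(\Gmh\otimes A)[\Ical]$. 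What you need for this is full faithfulness of the \emph{Dieudonn\'e module functor} on finite flat group schemes, which is de Jong's theorem cited in the paper; \Cref{thm:padic41Wk} is instead an equivalence between the category of sheaves-with-$F$-isomorphism and the category of Galois representations, and by itself it does not tell you that two group schemes with isomorphic Dieudonn\'e modules are isomorphic. You parenthetically gesture at ``the functor is fully faithful'' without pinning down which functor, and this is exactly where the citation to \Cref{thm:padic41Wk} should be replaced by the de Jong reference. (Alternatively, you could restructure the cycle to prove $(1)\Leftrightarrow(3)$ directly via \Cref{thm:padic41Wk}, which is the natural habitat of that theorem, and then $(2)\Leftrightarrow(3)$ via \Cref{prop:fin_gp_gal_desc} plus the fact that finite flat group schemes \'etale-locally isomorphic to a fixed one are classified by their Galois descent data; that would avoid needing de Jong at all.)

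The other difference is structural rather than substantive: the paper first proves the theorem over $R/p$ using de Jong's equivalence in characteristic $p$ and then lifts to $R$ via Lubin--Tate deformation theory, whereas you work directly over the $\W(\kappa)$-algebra $R$ by applying Katz's \Cref{thm:padic41Wk} (and its finite-level versions \Cref{thm:padic41} mod $p^m$) and then handling the coefficient mismatch between $A/\Ical$ and $\W_m(\mathbb{F}_q)$ by transporting the $A/\Ical$-action monoidally. Both are viable. The paper's route is shorter because de Jong's theorem is stated precisely at the level of generality needed; your route makes the role of the Riemann--Hilbert correspondence more visible (which is in the spirit of the section title), but the coefficient bookkeeping you flag is real and the monoidal transport argument would need to be spelled out to fully close the $(3)\Rightarrow(1)$ direction as well.
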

	\begin{proof}
		Let's prove the case when $R=R/p$. By \cite[Main Theorem 1]{de_Jong_crystalline_Dieudonne}, the functor $\Dbb$ is an equivalence over $R$. The claim then follows from the computation of the Dieudonn\'e module and the Galois descent data of $\Gmh$ in \Cref{exmp:D_Gmh}, as well as \Cref{prop:fin_gp_d-mod} and \Cref{prop:fin_gp_gal_desc}. 
		
		Now let $R$ be a $\W\kappa$-algebra. Using the Lubin-Tate deformation theory, we can show there is an equivalence between height $1$ formal groups over $R/p$ and their deformations to $R/p$. The claim now follows from the $R=R/p$-case.
	\end{proof}
	\begin{rem}
		Katz's \Cref{thm:padic41} is the $\Ical=(p^m)\trianglelefteq A=\W\mathbb{F}_{q}$ case of \Cref{thm:rhc_formal_A-mod}. 
	\end{rem}
	\begin{rem}
		We can generalize \Cref{thm:padic41} and \Cref{thm:padic41Wk} in terms of formal groups and formal $A$-modules of height/slope $h>1$. In that case, we need to study the Dieudonn\'e module of the Honda formal group $\Gamma_h$ of height $h$ and its finite subgroup schemes. The result is included in \Cref{Sec:appendix}.
	\end{rem}
	Now apply \Cref{thm:rhc_formal_A-mod} to the formal $\Zp[\chi]$-module $\wh{C}^{k,\chi}$ over $\Mello(\Gamma_0(N'))$ constructed in \Cref{con:gal_desc}, we have established Step II in \Cref{subsec:sketch_proof}:
	\begin{cor}[Step II] \label{cor:rhc_mello}
		Let $\Ical\trianglelefteq \Zp[\chi]$ be an ideal. The followings are equivalent:
		\begin{enumerate}[start=2,label=\textup{(\roman*).}]
			\item There is a generator $\gamma\in H^0(\Mello(\Gamma_0(N')),\bfomega^{k,\chi})$ such that $F^{k,\chi}(\gamma)\equiv \gamma\mod \Ical$.
			\item $\wh{C}^{k,\chi}[\Ical]\cong  (\Gmh\otimes \Zp[\chi])[\Ical]$.
			\item The Galois descent data $\rho^{k,\chi}\colon \piet(\Mello(\Gamma_0(N'))\to (\Zp[\chi])^\x$ of $\wh{C}^{k,\chi}$ is trivial modulo $\Ical$.
		\end{enumerate}
	\end{cor}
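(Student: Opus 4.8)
The plan is to deduce the corollary directly from \Cref{thm:rhc_formal_A-mod}, applied to the formal $\Zp[\chi]$-module $\Gh=\wh{C}^{k,\chi}$ with coefficient ring $A=\Zp[\chi]$ over the base $R$ presenting $\Mello(\Gamma_0(N'))$. Note first that $\wh{C}^{k,\chi}$ has height $1$: its pullback along $\xi$ is $\wh{C}^{\otimes k}\otimes\Zp[\chi]$, which is height $1$, and height is an \'etale-local property. Under this dictionary the three conditions of \Cref{thm:rhc_formal_A-mod} become, respectively, conditions (ii), (iii), and (iv) of the corollary: the Dieudonn\'e module $\Dbb(\Gh)=(M,F)$ is $(\bfomega^{k,\chi},F^{k,\chi})$ with $M=H^0(\Mello(\Gamma_0(N')),\bfomega^{k,\chi})$ a module over $R\otimes\Zp[\chi]=H^0(\Mello(\Gamma_0(N')),\mathcal{O})\otimes\Zp[\chi]$, as identified in the discussion preceding \Cref{prop: Eisenstein_Dieudonne}; the $\Ical$-torsion subgroup scheme is $\wh{C}^{k,\chi}[\Ical]$; and the Galois descent data $\rho:\piet(R)\to A^\x$ is the character $\rho^{k,\chi}$ attached to $\wh{C}^{k,\chi}$ in \Cref{con:gal_desc}. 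Together with Step I (\Cref{prop: Eisenstein_Dieudonne}), which already identifies condition (ii) with condition (i), this yields the full chain of equivalences.

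The real work is checking that the base meets the hypotheses of \Cref{thm:rhc_formal_A-mod}, i.e.\ that $\Mello(\Gamma_0(N'))$ is presented by a formal affine scheme $\spf R_0$ that is flat over $\Zp=\W(\mathbb{F}_p)$, has $R_0/p$ an integrally closed integral domain, and is formally smooth. First, $\Mello(p^v,\Gamma_1(N'))=\spf R$ is a smooth formal affine scheme over $\Zp$ by \Cref{prop:mu_3_rigid}; since $\xi$ is a $\znx$-torsor (\Cref{prop:xi_torsor_stacks}) the $\znx$-action on $R$ is free, so $\Mello(\Gamma_0(N'))\cong\spf R_0$ with $R_0=R^{\znx}$ is again a formal affine scheme, flat over $\Zp$, with $R_0/p=(R/p)^{\znx}$. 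This quotient ring is normal (invariants of a normal ring under a finite group action) and a domain, because $\spec(R/p)$ is irreducible — connectedness of the $p$-adic moduli follows from \cite[Theorem 1.2.1]{Conrad_arithmetic_moduli} — so its quotient by $\znx$ is connected and normal, hence irreducible. Formal smoothness over $\Zp$ holds because $\Mello(\Gamma_0(N'))\to\Mello$ is finite \'etale (as $p\nmid N'$) and $\Mello$ is smooth over $\Z$; the Frobenius lift $\varphi$ is the one induced by the canonical-subgroup quotient of \Cref{prop:can_subgp}, which is compatible with $\Gamma_0(N')$-structures since $p\nmid N'$.

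The main obstacle is a possible mismatch of generality. \Cref{prop:mu_3_rigid} only guarantees representability of $\Mello(p^v,\Gamma_1(N'))$ by a scheme under explicit hypotheses on $(N,p)$; for the remaining cases — for instance $\Gamma_1(3)$ at primes $p\equiv 1\bmod 3$, where the moduli problem is not rigid — $\Mello(\Gamma_0(N'))$ is a genuine Deligne--Mumford stack and \Cref{thm:rhc_formal_A-mod}, stated for affine formal schemes, does not literally apply. I would handle this either by extending \Cref{thm:rhc_formal_A-mod} to connected, formally smooth Deligne--Mumford stacks — the de Jong and Katz inputs and the computation of $\Dbb(\Gmh)$ in \Cref{exmp:D_Gmh} globalize without difficulty, the only point being that $\piet$ of the stack still classifies the relevant finite \'etale data — or by reducing condition (iv) to a statement over a single cusp via the $q$-expansion principle (\Cref{prop:q-exp}) together with the connectedness of $\Mell(\Gamma_1(N))_{\Z}$. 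I expect the stacky extension to be the cleaner route, since the congruence of $\rho^{k,\chi}$ in condition (iv) is genuinely a statement about $\piet$ of the base $\Mello(\Gamma_0(N'))$ and is not visible after pulling back along $\xi$.
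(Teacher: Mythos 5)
Your proposal takes the same route as the paper: apply \Cref{thm:rhc_formal_A-mod} to the formal $\Zp[\chi]$-module $\wh{C}^{k,\chi}$ over $\Mello(\Gamma_0(N'))$, which is exactly what the paper does in the one sentence immediately preceding \Cref{cor:rhc_mello}. Where you differ is in diligence rather than strategy. The paper offers no verification whatsoever that the base $\Mello(\Gamma_0(N'))$ satisfies the hypotheses of \Cref{thm:rhc_formal_A-mod} (flat over $\W\kappa$, $R_0/p$ an integrally closed domain, formally smooth with a Frobenius lift), whereas you carry out that check carefully — identifying $\spf R_0$ as the $\znx$-invariants of the affine presentation of $\Mello(p^v,\Gamma_1(N'))$, and using freeness of the torsor action, normality of invariant rings, and irreducibility via Conrad's connectedness theorem.

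The point you raise at the end is a genuine one that the paper glosses over: when $\Mello(p^v,\Gamma_1(N'))$ is not a scheme (for instance $N=3$ and $p\equiv 1\bmod 3$, which \Cref{prop:mu_3_rigid} explicitly excludes and the subsequent remark flags as non-rigid), the base $\Mello(\Gamma_0(N'))$ is a genuine Deligne--Mumford stack and \Cref{thm:rhc_formal_A-mod} as stated does not literally apply. Your two proposed remedies are both sensible. The stacky extension is plausible, since $\piet$, Galois descent data, and Dieudonn\'e modules all make sense on a connected formally smooth DM stack and the crystalline inputs globalize; the $q$-expansion reduction is also available because injectivity of the $q$-expansion map holds by connectedness of $\Mell(\Gamma_1(N))_{\Z}$. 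Either way, you have located and addressed a step the paper does not acknowledge, which is a strength of your write-up rather than a defect.
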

	\subsection{Step III: Factorizations of the Galois descent data}
	The final step is to study the Galois descent data $\rho^{k,\chi}$ for $\wh{C}^{k,\chi}$. Recall from \Cref{con:gal_desc}, $\wh{C}^{k,\chi}$ is constructed using the following data:
	\begin{itemize}
		\item $\xi^*\wh{C}^{k,\chi}\cong  \wh{C}^{\otimes k}\otimes \Zp[\chi]$ over $\Mello(p^v,\Gamma_1(N'))$, where $\xi\colon \Mello(p^v,\Gamma_1(N'))\to \Mello(\Gamma_0(N'))$ is the forgetful map.
		\item $\wh{C}^{k,\chi}$ corresponds to descent data $[\chi]\in H^1(\znx;(\Zp[\chi])^\x)$.
	\end{itemize}
	\begin{prop}\label{prop:rho_kchi_factorization}
		$\rho^{k,\chi}\colon \piet(\Mello(\Gamma_0(N'))\to (\Zp[\chi])^\x$ factors as 
		\begin{equation*}
		\rho^{k,\chi}\colon \piet(\Mello(\Gamma_0(N')))\xrightarrow{\rho^1\x \lambda_{\xi}}\Zpx\x \znx\xrightarrow{(-)^k\cdot\chi(-)}(\Zp[\chi])^\x,
		\end{equation*}
		where $\lambda_{\xi}\colon \piet(\Mello(\Gamma_0(N')))\to \znx$ is the character that classifies the $\znx$-torsor $\xi$. 
	\end{prop}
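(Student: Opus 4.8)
The plan is to compute $\rho^{k,\chi}$ directly from the construction of $\wh{C}^{k,\chi}$ in \Cref{con:gal_desc}, using that Galois descent data is compatible with pullback and that the relevant automorphism group $(\Zp[\chi])^\x$ is abelian with trivial $\znx$-action. First I would recall that $\xi^*\wh{C}^{k,\chi}\simeq \wh{C}^{\otimes k}\otimes\Zp[\chi]$ over $\Mello(p^v,\Gamma_1(N'))$, and that by \Cref{exmp:FG_ell} and \Cref{con:Formal_A-mod} the Galois descent data of $\wh{C}^{\otimes k}\otimes\Zp[\chi]$ there is $(\rho^1)^k$ followed by the inclusion $\Zpx=\aut(\Gmh)\hookrightarrow\aut_{\Zp[\chi]}(\Gmh\otimes\Zp[\chi])=(\Zp[\chi])^\x$, where $\rho^1$ denotes (the pullback to the relevant \'etale fundamental group of) the descent data of $\wh{C}$ from \Cref{exmp:FG_ell}. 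Working with a connected model (or the \'etale fundamental groupoid if $\Mello(p^v,\Gamma_1(N'))$ is disconnected), the torsor $\xi$ gives a short exact sequence
\[
1\longrightarrow \piet(\Mello(p^v,\Gamma_1(N')))\longrightarrow \piet(\Mello(\Gamma_0(N')))\xrightarrow{\ \lambda_\xi\ }\znx\longrightarrow 1 .
\]

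Next I would exploit compatibility of descent data with pullback: restricting $\rho^{k,\chi}$ along the inclusion $\piet(\Mello(p^v,\Gamma_1(N')))\hookrightarrow\piet(\Mello(\Gamma_0(N')))$ recovers the descent data of $\xi^*\wh{C}^{k,\chi}$, which we just identified with $(\rho^1)^k$. Since $(\rho^1)^k$ is itself already defined on all of $\piet(\Mello(\Gamma_0(N')))$ and restricts to the same thing, the quotient $\delta:=\rho^{k,\chi}\cdot((\rho^1)^k)^{-1}$ is a homomorphism into the abelian group $(\Zp[\chi])^\x$ that is trivial on $\ker\lambda_\xi$; hence it factors as $\delta=\psi\circ\lambda_\xi$ for a unique character $\psi:\znx\to(\Zp[\chi])^\x$. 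Finally, the commutative diagram in \Cref{con:gal_desc} says exactly that, for each $\sigma\in\znx$, the discrepancy between the two descent isomorphisms of $\wh{C}^{k,\chi}$ over the cover is the scalar $[\chi(\sigma)]$; translating this into monodromy identifies $\psi=\chi$. Therefore $\rho^{k,\chi}(g)=(\rho^1(g))^k\cdot\chi(\lambda_\xi(g))$ for all $g$, i.e. $\rho^{k,\chi}$ factors as $\rho^1\times\lambda_\xi$ followed by $(a,\sigma)\mapsto a^k\chi(\sigma)$, as claimed; it is automatically a homomorphism because $\rho^1$, $\lambda_\xi$, $(-)^k$, and $\chi$ all are.

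The step I expect to be the main obstacle is the \'etale-$\pi_1$ bookkeeping: making precise the short exact sequence above (in particular handling possible disconnectedness of $\Mello(p^v,\Gamma_1(N'))$, where one replaces $\znx$ by the image of $\lambda_\xi$ and uses the fundamental groupoid), and verifying carefully that the monodromy of the object built by descent along $\xi$ on top of the residual monodromy of $\wh{C}$ decomposes as the product of the two contributions rather than a twisted product --- this is precisely where the triviality of the $\znx$-action on $(\Zp[\chi])^\x$ is used. I would also record, for use in Step III, the identification of the $\zx{p^v}$-component of $\lambda_\xi$ with $\rho^1\bmod p^v$ (the $\zx{p^v}$-torsor underlying $\xi$ is pulled back from $\Mello(p^v)\to\Mello$, which is classified by $\rho^1\bmod p^v$), so that $\chi(\lambda_\xi(g))=\chi_p(\rho^1(g))\cdot\chi'(\lambda_{N'}(g))$ and the factorization can equivalently be written through $\rho^1\times\lambda_{N'}:\piet(\Mello(\Gamma_0(N')))\to\Zpx\times\zx{N'}$ followed by $(a,b)\mapsto a^k\chi_p(a)\chi'(b)$, as in the proof sketch.
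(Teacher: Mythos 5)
Your proposal is correct and follows essentially the same route as the paper: both restrict $\rho^{k,\chi}$ along $\piet(\xi)$ to identify it with $(\rho^1)^k$ there, observe that the discrepancy $\rho^{k,\chi}\cdot((\rho^1)^k)^{-1}$ is a homomorphism into the abelian group $(\Zp[\chi])^\x$ that kills the image of $\piet(\xi)$ and hence factors uniquely through $\lambda_\xi$, and then read off from the commutative diagram in \Cref{con:gal_desc} that the resulting character on $\znx$ is $\chi$. Your concluding remark about refactoring through $\rho^1\x\lambda_{N'}$ is what the paper defers to \Cref{prop:rho_lambda_factorization}, and your caveat about connectedness of $\Mello(p^v,\Gamma_1(N'))$ (needed to treat $\lambda_\xi$ as the cokernel of $\piet(\xi)$) is a legitimate point that the paper handles implicitly via \Cref{prop:char_surj}.
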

	\begin{proof}
		Recall in \Cref{con:gal_desc}, we used the following correspondence to construct $\wh{C}^{k,\chi}$ from the character $\chi$:
		\begin{equation}\label{eqn:char_desc}
		H^1(\znx;(\Zp[\chi])^\x)\cong  \left.\left\{\begin{array}{c}
		\text{Formal $\Zp[\chi]$-modules $\Gh$ over } \Mello(\Gamma_0(N'))\\
		\text{such that }\xi^*\Gh\cong  \wh{C}^{\otimes k}\otimes \Zp[\chi]\text{ over }\Mello(p^v,\Gamma_1(N'))
		\end{array}\right\}\right/\sim.
		\end{equation}
		Here, the constant group homomorphism on the left hand side corresponds to the formal $\Zp[\chi]$-module $ \wh{C}^{\otimes k}\otimes \Zp[\chi]$ over $\Mello(\Gamma_0(N'))$. Now we need to describe this correspondence in terms of the Galois descent data $\rho_{\Gh}$ of $\Gh$. On one hand, since $\xi^*\Gh\cong  \wh{C}^{\otimes k}\otimes \Zp[\chi]$, the composition
		\begin{equation}\label{eqn:gal_desc_pullback}
		\piet(\Mello(p^v,\Gamma_1(N')))\xrightarrow{\piet(\xi)}\piet(\Mello(\Gamma_0(N')))\xrightarrow{\rho_{\Gh}}(\Zp[\chi])^\x
		\end{equation}
		is the same as the Galois descent data for the formal $\Zp[\chi]$-module $\wh{C}^{\otimes k}\otimes \Zp[\chi]$ over $\Mello(p^v,\Gamma_1(N'))$. On the other hand, by \Cref{exmp:FG_ell} and \Cref{con:Formal_A-mod}, this Galois descent data also factors as
		\begin{equation}\label{eqn:gal_desc_con}
		\piet(\Mello(p^v,\Gamma_1(N')))\xrightarrow{\piet(\xi)}\piet(\Mello(\Gamma_0(N')))\xrightarrow{\rho^1}\Zpx\xrightarrow{(-)^k}\Zpx\xrightarrow{i} (\Zp[\chi])^\x.
		\end{equation}
		Denote the composition $i\circ(-)^k\circ \rho^1$ in \eqref{eqn:gal_desc_con} by $\rho^k$. Since the first maps in \eqref{eqn:gal_desc_pullback} and \eqref{eqn:gal_desc_con} are both $\piet(\xi)$ and the compositions are the same, the difference of $\rho_{\Gh}$ and $\rho^k$ must factor through the cokernel of $\piet(\xi)$. We have the following diagram:
		\begin{equation*}
		\begin{tikzcd}
		\piet(\Mello(p^v,\Gamma_1(N')))\rar["\piet(\xi)"]&\piet(\Mello(\Gamma_0(N')))\rar["\lambda_\xi"]\dar[shift right,"\rho_{\Gh}"']\dar[shift left,"\rho^k"]&\znx\rar\ar[dl,dashed,"\exists!~\chi_{\Gh}"] &1\\
		&(\Zp[\chi])^\x&&
		\end{tikzcd}
		\end{equation*}
		As the cokernel of $\piet(\xi)$, $\lambda_{\xi}$ classifies the $\znx$-torsor $\xi\colon \Mello(p^v,\Gamma_1(N'))\to \Mello(\Gamma_0(N'))$. It follows the that there exists a unique character $\chi_{\Gh}\colon \znx\to\Zp[\chi]$ such that for any $\sigma\in\piet(\Mello(\Gamma_0(N')))$, $\rho_{\Gh}(\sigma)=(\rho^1(\sigma))^k\cdot (\chi_{\Gh}\circ\lambda_\xi)(\sigma)$.
		
		This $\chi_{\Gh}$ is the character corresponding to $\Gh$ in \eqref{eqn:char_desc}. Since $\wh{C}^{k,\chi}$ is constructed using $\chi$, we have 
		\begin{equation*}
		\rho^{k,\chi}(\sigma)=(\rho^1(\sigma))^k\cdot (\chi\circ\lambda_\xi)(\sigma)=((-)^k\cdot \chi(-))\circ (\rho^1\x \lambda_\xi)(\sigma)
		\end{equation*}
		for all $\sigma\in \piet(\Mello(\Gamma_0(N')))$.
	\end{proof}
	Now we need to find the image of $\rho^1\x \lambda_{\xi}$. 
	\begin{prop}\label{prop:rho_lambda_factorization}
		$\rho^1\x \lambda_{\xi}\colon \piet(\Mello(\Gamma_0(N')))\longrightarrow \Zpx\x \znx$ factors as:
		\begin{equation*}
		\rho^1\x \lambda_{\xi}\colon  \piet(\Mello(\Gamma_0(N')))\xrightarrow{\rho^1\x \lambda_{N'}}\Zpx\x\zx{N'}\xrightarrow{(a,b)\mapsto (a,[a],b)} \Zpx\x\zx{p^v}\x\zx{N'}\cong  \Zpx\x \znx,
		\end{equation*}
		where $\lambda_{N'}\colon  \piet(\Mello(\Gamma_0(N')))\to \zx{N'}$ classifies the $\zx{N'}$-torsor $\Mello(\Gamma_1(N'))\to \Mello(\Gamma_0(N'))$.
	\end{prop}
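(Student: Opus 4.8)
The plan is to split the $\znx$-torsor $\xi$ along the Chinese remainder isomorphism $\znx\simeq\zx{p^v}\x\zx{N'}$ and treat the two factors separately. Write $\pi_{p^v}$ and $\pi_{N'}$ for the projections onto the two factors. The general principle I would invoke is that a torsor for a product of finite groups is determined by the pair of its push-forwards along the two projections, and that pushing a torsor forward along a homomorphism of coefficient groups post-composes its classifying character with that homomorphism. Pushing $\xi$ forward along $\pi_{N'}$ amounts to forgetting the $\mu_{p^v}$-level structure, which yields exactly the $\zx{N'}$-torsor $\Mello(\Gamma_1(N'))\to\Mello(\Gamma_0(N'))$; hence $\pi_{N'}\circ\lambda_{\xi}=\lambda_{N'}$ by the very definition of $\lambda_{N'}$. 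The proposition therefore reduces to the claim that the remaining component $\lambda_{p^v}:=\pi_{p^v}\circ\lambda_{\xi}$ equals the reduction of $\rho^1$ modulo $p^v$.

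To prove this, I would first note that $\lambda_{p^v}$ classifies the $\zx{p^v}$-torsor over $\Mello(\Gamma_0(N'))$ parametrizing isomorphisms $\eta_p:\mu_{p^v}\simto\wh{C}[p^v]$, where $\wh{C}$ is the formal group of the universal $p$-ordinary elliptic curve; this torsor is the base change along $\Mello(\Gamma_0(N'))\to\Mello$ of the first stage of the Igusa tower in \eqref{igusa_tower}, and base change affects the classifying character only through the induced map on $\piet$. By \Cref{exmp:FG_ell}, the Galois descent data of the height-$1$ formal group $\wh{C}$ is $\rho^1:\piet(\Mello(\Gamma_0(N')))\to\aut(\Gmh)=\Zpx$. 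Now apply \Cref{prop:fin_gp_gal_desc} with $A=\Zp$ and $\Ical=(p^v)$: since $\Gmh[p^v]=\mu_{p^v}$ and $\aut(\mu_{p^v})=\zx{p^v}$, the descent data of $\wh{C}[p^v]$ is the image of $\rho^1$ under the homomorphism $\Zpx\to\zx{p^v}$ induced by restricting automorphisms of $\Gmh$ to its $p^v$-torsion, which is exactly reduction modulo $p^v$. Because the torsor of trivializations of $\wh{C}[p^v]$ is classified by the descent data of $\wh{C}[p^v]$, this yields $\lambda_{p^v}=(\rho^1\bmod p^v)$. Feeding both components back, $\rho^1\x\lambda_{\xi}$ factors through $\rho^1\x\lambda_{N'}$ via $(a,b)\mapsto(a,[a],b)$, with $[a]$ the image of $a$ in $\zx{p^v}$, which is the asserted factorization.

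I expect the main obstacle to be the clean identification $\lambda_{p^v}=(\rho^1\bmod p^v)$, which hinges on three compatibilities that must be spelled out: that the Igusa-type $\zx{p^v}$-torsor of $\mu_{p^v}$-level structures coincides with the torsor of trivializations of $\wh{C}[p^v]$; that passing to $p^v$-torsion transports descent data as recorded in \Cref{prop:fin_gp_gal_desc}; and that adding a $\Gamma_0(N')$-decoration only alters the picture through $\piet(\Mello(\Gamma_0(N')))\to\piet(\Mello)$. A minor bookkeeping point is the left-versus-right convention for torsors, which a priori could replace $[a]$ by $a^{-1}\bmod p^v$; in the normalization fixed here it comes out as $a\bmod p^v$, consistent with the map $(a,b)\mapsto\chi_p(a)\chi'(b)a^k$ quoted in the sketch of Step III.
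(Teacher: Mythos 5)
Your proof is correct, and it takes a related but genuinely different route from the paper. The paper's proof stays entirely in the torsor picture: it introduces the intermediate stack $\Mtriv(\Gamma_1(N'))$ sitting between $\Mtriv(p^v,\Gamma_1(N'))$ and $\Mello(\Gamma_0(N'))$, observes that $\Mtriv(p^v,\Gamma_1(N'))\to\Mtriv(\Gamma_1(N'))$ is a $\zx{p^v}$-torsor, and then constructs an \emph{explicit section} $s:(C,\eta,\eta')\mapsto(C,\eta,\eta|_{\wh{C}[p^v]},\eta')$ by restricting the trivialization $\eta:\Gmh\simto\wh{C}$ to $p^v$-torsion. The existence and formula of this section simultaneously force the factorization through $\rho^1\x\lambda_{N'}$ and identify the lower horizontal map as $(a,b)\mapsto(a,[a],b)$. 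Your argument instead splits $\lambda_\xi$ along the CRT into $\lambda_{p^v}\x\lambda_{N'}$ and computes $\lambda_{p^v}$ cohomologically, by feeding $\wh{C}$ into \Cref{prop:fin_gp_gal_desc} with $A=\Zp$, $\Ical=(p^v)$ to get that the descent data of $\wh{C}[p^v]$ is $\rho^1 \bmod p^v$. Both routes are valid and morally encode the same geometric fact (that a trivialization of $\wh{C}$ restricts to one of $\wh{C}[p^v]$); the paper's section is more self-contained and concrete, while your approach reuses the descent-data-of-finite-subgroups machinery already set up in Step II, which arguably makes the logical dependency structure cleaner. One thing your write-up glosses over, and which the paper's explicit section handles for free, is the normalization question you flag at the end: the identification of the torsor of trivializations of $\wh{C}[p^v]$ with the descent data of $\wh{C}[p^v]$ does need the sign/orientation convention fixed consistently with \Cref{exmp:FG_ell} and \Cref{prop:fin_gp_gal_desc}, so it is worth pinning that down rather than asserting it comes out right.
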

	\begin{proof}
		We prove the factorization by translating Galois representations into torsors over $\Mello(\Gamma_0(N'))$. 
		\begin{lem}\label{lem:rho_1_torsor}
			The character $\rho^1\colon \piet(\Mello(\Gamma_0(N')))\to\Zpx$ classifies the $\Zpx$-torsor $\Mtriv(\Gamma_0(N'))\to \Mello(\Gamma_0(N'))$, where $\Mtriv(\Gamma_0(N'))$ is a stack whose $R$-points are 
			\begin{equation*}
			\Mtriv(\Gamma_0(N'))(R)=\{(C/R,\eta\colon\Gmh\simto\wh{C},H\subseteq C[N'])\mid H\cong  \underline{\Z/N'} \}.
			\end{equation*}
		\end{lem}
		\begin{proof}[Proof of the Lemma]
			Recall that $[\rho^1]\in H_c^1(\piet(\Mello(\Gamma_0(N')));\Zpx)$ is the Galois descent data for $\wh{C}$, the formal group of the universal elliptic curve over $\Mello(\Gamma_0(N'))$. The character $\rho^1$ then corresponds to a $\Zpx$-torsor over $\Mello(\Gamma_0(N'))$ such that its fiber over the an $R$-point $(C/R,H\subseteq C[N'])$ is the set of triples $(C/R,\eta\colon\Gmh\simto \wh{C},H\subseteq C[N'])$. 
		\end{proof}
		\Cref{lem:rho_1_torsor} implies that the character $\rho^1\x \lambda_{\xi}$ classifies the $\Zpx\x \znx$-torsor $\Mtriv(p^v,\Gamma_1(N'))\to \Mello(\Gamma_0(N'))$, where $\Mtriv(p^v,\Gamma_1(N'))$ is a stack whose $R$-points are
		\begin{equation*}
		\Mtriv(p^v,\Gamma_1(N'))(R)=\{(C/R,\eta,\eta_p,\eta')\mid \eta\colon \Gmh\simto\wh{C},\eta_p\colon \mu_{p^v}\simto\wh{C}[p^v],\eta'\colon \underline{\Z/N'}\hookrightarrow C[N']\}.
		\end{equation*}
		Sitting in between $\Mtriv(p^v,\Gamma_1(N'))$ and $\Mello(\Gamma_0(N'))$ is the stack $\Mtriv(\Gamma_1(N'))$, whose $R$-points are 
		\begin{equation*}
		\Mtriv(\Gamma_1(N'))(R)=\{(C/R,\eta,\eta')\mid \eta\colon \Gmh\simto\wh{C},\eta'\colon \underline{\Z/N'}\hookrightarrow C[N']\}.
		\end{equation*}
		In the $\Zpx\x\znx$-torsor
		\begin{equation*}
		\begin{tikzcd}
		\Mtriv(p^v,\Gamma_1(N'))\rar&\Mtriv(\Gamma_1(N'))\rar&\Mello(\Gamma_0(N')),
		\end{tikzcd}		
		\end{equation*}
		the first map $\Mtriv(p^v,\Gamma_1(N'))\longrightarrow \Mtriv(\Gamma_1(N'))$ is a $\zx{p^v}$-torsor that admits a section:
		\begin{equation*}
		s\colon \Mtriv(\Gamma_1(N')) \longrightarrow \Mtriv(p^v,\Gamma_1(N')), \quad (C/R,\eta,\eta')\longmapsto (C/R,\eta,\eta|_{\wh{C}[p^v]},\eta').
		\end{equation*}
		The existence of this section implies that $\rho^1\x \lambda_\xi$ must factor through $\rho^1\x \lambda_{N'}\colon \piet(\Mello(\Gamma_0(N')))\to \Zpx\x\zx{N'}$, the character corresponding to the $\Zpx\x\zx{N'}$-torsor $\Mtriv(\Gamma_1(N'))\to \Mello(\Gamma_0(N'))$. The formula of $s$ then yields a commutative diagram:
		\begin{equation*}
		\begin{tikzcd}[anchor=south,column sep=4 em]
		\piet(\Mello(\Gamma_0(N')))\rar["\rho^1\x \lambda_{\xi}"]\dar["\rho^1\x \lambda_{N'}"']&\Zpx\x\znx\dar[equal]\\
		\Zpx\x\zx{N'}\rar["{(a,b)\mapsto (a,[a],b)}"]&\Zpx\x\zx{p^v}\x\zx{N'}.
		\end{tikzcd}\qedhere
		\end{equation*}
	\end{proof}
	Combining \Cref{prop:rho_kchi_factorization} and \Cref{prop:rho_lambda_factorization}, we have shown
	\begin{cor}\label{cor:rho_kchi_factorization}
		$\rho^{k,\chi}\colon \piet(\Mello(\Gamma_0(N'))\to (\Zp[\chi])^\x$ factors as
		\begin{equation}\label{eqn:rho_kchi}
		\rho^{k,\chi}\colon \piet(\Mello(\Gamma_0(N')))\xrightarrow{\rho^1\x \lambda_{N'}} \Zpx\x\zx{N'}\xrightarrow{(a,b)\mapsto \chi_p(a)\chi'(b)a^k} (\Zp[\chi])^\x.
		\end{equation}
	\end{cor}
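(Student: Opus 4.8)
The plan is to obtain this factorization by composing the two factorizations already in hand and then unwinding the decomposition $\chi = \chi_p\chi'$ fixed in the Notations section. By \Cref{prop:rho_kchi_factorization} we already know
\[
\rho^{k,\chi}\colon \piet(\Mello(\Gamma_0(N')))\xrightarrow{\rho^1\x\lambda_\xi}\Zpx\x\znx\xrightarrow{(-)^k\cdot\chi(-)}(\Zp[\chi])^\x,
\]
so all that remains is to rewrite the first arrow using \Cref{prop:rho_lambda_factorization} and then evaluate $(-)^k\cdot\chi(-)$ on the result.

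First I would invoke \Cref{prop:rho_lambda_factorization}: with $N=p^vN'$ and $p\nmid N'$, the Chinese Remainder isomorphism $\znx\simeq\zx{p^v}\x\zx{N'}$ identifies $\rho^1\x\lambda_\xi$ with the composite of $\rho^1\x\lambda_{N'}\colon\piet(\Mello(\Gamma_0(N')))\to\Zpx\x\zx{N'}$ and the map $(a,b)\mapsto(a,[a],b)$, where $[a]$ denotes the image of $a\in\Zpx$ in $\zx{p^v}$. Hence for $\sigma$ with $(\rho^1\x\lambda_{N'})(\sigma)=(a,b)$ we get $(\rho^1\x\lambda_\xi)(\sigma)=(a;[a],b)\in\Zpx\x\znx$. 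Next I would apply $(-)^k\cdot\chi(-)$: under $\znx\simeq\zx{p^v}\x\zx{N'}$ the character $\chi$ is, by construction, the product $\chi_p\cdot\chi'$ with conductors $p^v$ and $N'$, so $\chi([a],b)=\chi_p(a)\chi'(b)$ ---writing $\chi_p(a)$ for $\chi_p$ of the reduction $[a]$--- while $(-)^k$ on the leading $\Zpx$-coordinate contributes $a^k$. Therefore $(-)^k\cdot\chi(-)$ sends $(a;[a],b)$ to $a^k\chi_p(a)\chi'(b)=\chi_p(a)\chi'(b)a^k$, and composing with $\rho^1\x\lambda_{N'}$ yields precisely \eqref{eqn:rho_kchi}.

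The one place where something substantive happens is already internal to \Cref{prop:rho_lambda_factorization} rather than to this corollary: one must know that the $\zx{p^v}$-coordinate of $\lambda_\xi$ is genuinely the reduction of $\rho^1$, so that the two copies of the isomorphism $\znx\simeq\zx{p^v}\x\zx{N'}$ are used compatibly and no independent $\zx{p^v}$-character sneaks in. That compatibility is witnessed by the section $s\colon\Mtriv(\Gamma_1(N'))\to\Mtriv(p^v,\Gamma_1(N'))$, $(C,\eta,\eta')\mapsto(C,\eta,\eta|_{\wh{C}[p^v]},\eta')$, of the $\zx{p^v}$-torsor sitting over $\Mtriv(\Gamma_1(N'))$. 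Granting that input, the present statement is pure bookkeeping, and I expect no genuine obstacle; the only thing to watch is tracking which $\Zpx$-factor the exponentiation $(-)^k$ acts on and not conflating it with the $\zx{p^v}$-factor $[a]$.
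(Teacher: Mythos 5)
Your proposal is correct and takes the same route as the paper: the paper deduces the corollary directly by composing \Cref{prop:rho_kchi_factorization} with \Cref{prop:rho_lambda_factorization} and using $\chi=\chi_p\chi'$ under the CRT isomorphism $\znx\simeq\zx{p^v}\x\zx{N'}$. Your more explicit unwinding of the maps, and your remark that the substantive content (compatibility of $\lambda_\xi$ with the reduction of $\rho^1$, via the section $s$) lives in \Cref{prop:rho_lambda_factorization}, accurately reflects what is going on.
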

	To relate the congruence of $\rho^{k,\chi}$ with that of the second map in \eqref{eqn:rho_kchi}, it remains to show:
	\begin{prop} \label{prop:char_surj}
		$\rho^1\x \lambda_{N'}\colon \piet(\Mello(\Gamma_0(N')))\longrightarrow \Zpx\x\zx{N'}$ is surjective.
	\end{prop}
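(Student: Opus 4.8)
The plan is to translate the surjectivity of $\rho^1\x\lambda_{N'}$ into a connectedness statement and then invoke a relative form of Igusa's irreducibility theorem. By the proof of \Cref{prop:rho_lambda_factorization}, the character $\rho^1\x\lambda_{N'}$ is exactly the one classifying the $\Zpx\x\zx{N'}$-torsor $\Mtriv(\Gamma_1(N'))\to\Mello(\Gamma_0(N'))$. Since the structure group is abelian with trivial Galois action and the base $\Mello(\Gamma_0(N'))$ is connected, the classifying homomorphism is surjective if and only if its total space $\Mtriv(\Gamma_1(N'))$ is connected (the connected components of a $G$-torsor over a connected base are the cosets of the image of its classifying map, passing to limits for profinite $G$). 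So the whole proposition reduces to showing that $\Mtriv(\Gamma_1(N'))$ is connected.

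Next I would factor the tower as
\begin{equation*}
\Mtriv(\Gamma_1(N'))\xrightarrow{\ \Zpx\ }\Mello(\Gamma_1(N'))\xrightarrow{\ \zx{N'}\ }\Mello(\Gamma_0(N')),
\end{equation*}
where the first map is the Igusa tower over $\Mello(\Gamma_1(N'))$ — it is the fiber product of the $\Zpx$-torsor $\Mtriv(\Gamma_0(N'))\to\Mello(\Gamma_0(N'))$ of \Cref{lem:rho_1_torsor} with $\Mello(\Gamma_1(N'))$ — and the second is the $\zx{N'}$-torsor classified by $\lambda_{N'}$. First one checks that $\Mello(\Gamma_1(N'))$ is connected: $X_1(N')$ is geometrically connected (as recalled via \cite[Theorem 1.2.1]{Conrad_arithmetic_moduli}, see also \cite{amec}), hence so is $\Mell(\Gamma_1(N'))_{\overline{\mathbb{F}}_p}$ because $p\nmid N'$, hence so is its ordinary locus — the complement of the finitely many supersingular points in a connected smooth curve — and connectedness of a $p$-adic formal stack is detected on its reduction mod $p$. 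Consequently $\lambda_{N'}$ is surjective and $\ker\lambda_{N'}\cong\piet(\Mello(\Gamma_1(N')))$. A short diagram chase, using that $\rho^1$ is a homomorphism into an abelian group, then shows that $\rho^1\x\lambda_{N'}$ is surjective if and only if the restriction of $\rho^1$ to $\piet(\Mello(\Gamma_1(N')))$ already surjects onto $\Zpx$; equivalently, the Igusa tower $\Mtriv(\Gamma_1(N'))\to\Mello(\Gamma_1(N'))$ is connected.

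This last assertion is the relative form of Igusa's irreducibility theorem over the base $\Mello(\Gamma_1(N'))$, and I would establish it exactly as in the level-one case of Katz \cite[Ch.\ 4]{padic} (cf.\ \cite[Ch.\ 12]{amec}): the Igusa tower is totally ramified over each supersingular point of the compactification $X_1(N')_{\overline{\mathbb{F}}_p}$, with local Galois group all of $\Zpx$, so the image of $\piet(\Mello(\Gamma_1(N')))$ under $\rho^1$ contains this inertia subgroup and must therefore equal $\Zpx$. The prime-to-$p$ \'etale $\Gamma_1(N')$-structure is inert in this local computation. The main obstacle is precisely this local monodromy computation at the supersingular points; but since $\Gamma_1(N')$ is \'etale of order prime to $p$, it is the verbatim computation in \cite{padic}, so nothing new is needed beyond citing it. Combining the reductions above then yields the surjectivity of $\rho^1\x\lambda_{N'}$.
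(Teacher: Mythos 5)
Your argument follows the same route as the paper: translate surjectivity into connectedness of the torsor $\Mtriv(\Gamma_1(N'))\to\Mello(\Gamma_0(N'))$, reduce that (via a relative version of Igusa's irreducibility theorem) to connectedness of $\Mello(\Gamma_1(N'))$, and deduce the latter from the geometric connectedness and smoothness of $\Mell(\Gamma_1(N'))$ as in \cite[Theorem 1.2.1]{Conrad_arithmetic_moduli} and \cite[Corollary 4.7.1]{amec}. The only real difference is one of packaging: the paper cites \cite[Corollary 12.6.2.(2)]{amec} as a black box for the relative Igusa theorem, whereas you re-derive it by the local monodromy computation at the supersingular points (total ramification of the Igusa tower with inertia equal to $\Zpx$, with the prime-to-$p$ structure $\Gamma_1(N')$ inert) — which is precisely the argument behind that corollary.
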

	\begin{proof}
		By \cite[Theorem 5.4.2]{Szamuely_gal_gp}, the surjectivity of $\rho^1\x \lambda_{N'}$ is equivalent to the connectivity of the $\Zpx\x\zx{N'}$-torsor it classifies. As $\rho^1\x \lambda_{N'}$ classifies the torsor $\Mtriv(\Gamma_1(N'))\to \Mello(\Gamma_0(N'))$, we need to show $\Mtriv(\Gamma_1(N'))$ is connected.
		
		By a relative version of Igusa's theorem in \cite[Corrollary 12.6.2.(2)]{amec}, $\Mtriv(\Gamma_1(N'))$ is connected whenever $\Mello(\Gamma_1(N'))$ is. The integral stack $\Mell(\Gamma_1(N'))$ has geometrically connected fiber by \cite[Theorem 1.2.1]{Conrad_arithmetic_moduli}. It is also smooth by \cite[Corollary 4.7.1]{amec}. It follows that $\Mell(\Gamma_1(N'))$ is irreducible and so is its $p$-completion $\Mell(\Gamma_1(N'))^\wedge_{p}$. From this we conclude $\Mello(\Gamma_1(N'))$ is irreducible (hence connected), since it is an open substack of an irreducible stack.
	\end{proof}
	Now by \Cref{cor:rho_kchi_factorization} and \Cref{prop:char_surj}, we have proved:
	\begin{cor}[Step III]\label{cor:congruence_rho_kchi}
		Let $\Ical\trianglelefteq \Zp[\chi]$ be an ideal. The followings are equivalent:
		\begin{enumerate}[start= 4,label=\textup{(\roman*).}]
			\item The composition $\rho^{k,\chi}\colon  \piet(\Mello(\Gamma_0(N')))\longrightarrow (\Zp[\chi])^\x\twoheadrightarrow (\Zp[\chi]/\Ical)^\x$ is trivial.
			\item The composition $\Zpx\x\zx{N'}\xrightarrow{(a,b)\mapsto \chi_p(a)\chi'(b)a^k}(\Zp[\chi])^\x\twoheadrightarrow (\Zp[\chi]/\Ical)^\x$ is trivial.
		\end{enumerate}
	\end{cor}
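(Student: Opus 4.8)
The plan is to obtain the equivalence as a formal consequence of the two preceding results. By \Cref{cor:rho_kchi_factorization} the character $\rho^{k,\chi}$ factors as
\begin{equation*}
\piet(\Mello(\Gamma_0(N')))\xrightarrow{\ \rho^1\x\lambda_{N'}\ }\Zpx\x\zx{N'}\xrightarrow{\ (a,b)\mapsto\chi_p(a)\chi'(b)a^k\ }(\Zp[\chi])^\x ,
\end{equation*}
and by \Cref{prop:char_surj} the first arrow $\rho^1\x\lambda_{N'}$ is surjective. Post-composing this factorization with the reduction $(\Zp[\chi])^\x\twoheadrightarrow(\Zp[\chi]/\Ical)^\x$, the homomorphism whose triviality is asserted in (iv) is exactly $\rho^1\x\lambda_{N'}$ followed by the homomorphism whose triviality is asserted in (v).

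It then suffices to invoke the elementary fact that if $f\colon G\twoheadrightarrow H$ is a surjective group homomorphism and $g\colon H\to K$ is any homomorphism, then $g\circ f$ is trivial if and only if $g$ is trivial: the ``if'' direction is clear, and conversely, given $h\in H$, choosing $x\in G$ with $f(x)=h$ gives $g(h)=g(f(x))=1$. Applying this with $f=\rho^1\x\lambda_{N'}$ and $g$ the reduction modulo $\Ical$ of $(a,b)\mapsto\chi_p(a)\chi'(b)a^k$ yields (iv) $\Longleftrightarrow$ (v).

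There is essentially no remaining obstacle at this stage: the substantive content has already been discharged in \Cref{prop:rho_kchi_factorization} (the identification of $\rho^{k,\chi}$ with $((-)^k\cdot\chi(-))\circ(\rho^1\x\lambda_\xi)$ via the translation of Galois representations into torsors of stacks), in \Cref{prop:rho_lambda_factorization} (the passage from $\lambda_\xi$ to $\lambda_{N'}$ using the section of $\Mtriv(p^v,\Gamma_1(N'))\to\Mtriv(\Gamma_1(N'))$), and in \Cref{prop:char_surj} (connectivity of $\Mtriv(\Gamma_1(N'))$ via the relative Igusa theorem). The only point I would take care to spell out is that ``trivial modulo $\Ical$'' means, by the conventions fixed earlier, precisely triviality of the composite with $(\Zp[\chi])^\x\twoheadrightarrow(\Zp[\chi]/\Ical)^\x$, so that no further bookkeeping about the $\Ical$-reduction of the target is required.
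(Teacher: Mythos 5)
Your argument is correct and is exactly the paper's: the corollary is stated as an immediate consequence of \Cref{cor:rho_kchi_factorization} (the factorization through $\rho^1\times\lambda_{N'}$) and \Cref{prop:char_surj} (surjectivity of $\rho^1\times\lambda_{N'}$), which is precisely what you invoke. The elementary observation that pre-composition with a surjection preserves and reflects triviality is the only glue needed, and you supply it.
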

	\section{The maximal congruence of Eisenstein series}
	\Cref{thm:main} identifies congruences of modular forms in  $H^0(\Mello(p^v,\Gamma_1(N')),\bfo{k}\otimes\Zp[\chi])^{\chi^{-1}}$ with that of $\Zp^{\otimes k}[\chi]$ as a $\Zpx\x \zx{N'}$-representation in $\Zp[\chi]$-modules. In this section, we first compute the maximal congruence of $\Zp^{\otimes k}[\chi]$ and then find explicit examples of modular forms that realize this congruence in many cases. In all the examples we study, there is a modular form in the Eisenstein space $\Ecal_k(p^v,\Gamma_1(N'),\chi)$ that realizes the maximal congruence. 
	\subsection{Congruences of $p$-adic representations}
	\begin{defn}\label{defn:max_cong}
		Let $R$ be a $p$-complete local ring and $M$ be a torsion-free $R$-module with a continuous $R$-module action by a profinite group $G$.  $M$ is said to be a trivial $G$-representation modulo an ideal $\Ical\trianglelefteq R$ if $G$ acts on $M/\Ical M$ trivially, or equivalently $(M/\Ical M)^G=M/\Ical M$. The maximal congruence of $M$ as a $G$-representation is the smallest ideal $\Ical$ such that $M/\Ical M$ is a trivial $G$-representation.
	\end{defn}
	\begin{rem}
		The $G$-action on the quotient $M/\Ical M$ is well defined since $G$ acts by $R$-linear maps. Otherwise, we need to assume $\Ical\trianglelefteq R$ is a $G$-invariant ideal, i.e. $g\Ical=\Ical$ for all $g\in G$.
	\end{rem}
	\begin{lem}
		When the underlying $R$-module of the $G$-representation $M$ is $R$, the $G$-action of $M$ is then associated to a character $\chi\colon G\to R^\x$. Let $\{g_i\mid i\in I\}$ be a set of generators of $G$. The maximal congruence of $M$ is the ideal $(1-\chi(g_i)\mid i\in I)$.
	\end{lem}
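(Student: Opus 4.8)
The plan is to reduce everything to an elementary membership condition and then exhibit $\mathcal{J}:=(1-\chi(g_i)\mid i\in I)$ as the least ideal satisfying it. First I would record the obvious translation: since the underlying $R$-module of $M$ is $R$ itself, for any ideal $\Ical\trianglelefteq R$ we have $M/\Ical M\simeq R/\Ical$ with $g\in G$ acting by multiplication by the image of $\chi(g)$; hence $M/\Ical M$ is a trivial $G$-representation if and only if $\chi(g)\equiv 1\bmod\Ical$ for every $g\in G$, equivalently $1-\chi(g)\in\Ical$ for all $g$. In particular any ideal $\Ical$ that makes $M$ trivial must contain $1-\chi(g_i)$ for each $i\in I$, so it contains $\mathcal{J}$; this gives the ``lower bound'' half.

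It then remains to check that $\mathcal{J}$ itself makes $M$ a trivial $G$-representation, i.e.\ that $1-\chi(g)\in\mathcal{J}$ for \emph{all} $g\in G$, not merely for the chosen generators. Here I would use multiplicativity of $\chi$: from $\chi(gh)-1=\chi(g)(\chi(h)-1)+(\chi(g)-1)$ and $\chi(g^{-1})-1=-\chi(g)^{-1}(\chi(g)-1)$ one sees that $H:=\{g\in G\mid \chi(g)\equiv 1\bmod\mathcal{J}\}$ is a subgroup of $G$ containing every $g_i$. Since $\chi$ is continuous and $R/\mathcal{J}$ is a Hausdorff topological ring, $H$ is the preimage under the continuous map $G\to(R/\mathcal{J})^\x$ of the closed point $1$, hence closed; as the $g_i$ topologically generate $G$, this forces $H=G$, so by the translation above $M/\mathcal{J}M$ is trivial.

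Combining the two parts, $\mathcal{J}$ makes $M$ a trivial $G$-representation and is contained in every ideal with that property, so it is the smallest such ideal, which by definition is the maximal congruence of $M$. The only step that requires any care is the passage from a generating set to all of $G$ in the second paragraph, since a priori knowing $\chi(g_i)\equiv 1$ only controls the abstract subgroup generated by the $g_i$; the closure argument settles this, and in every application in this paper $G$ is topologically finitely generated (e.g.\ $\Zpx\x\zx{N'}$) with $R/\Ical$ finite, so it is automatic.
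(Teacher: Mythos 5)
Your proof is correct and takes essentially the same route as the paper's: translate the triviality of $M/\Ical M$ into the condition $1-\chi(g)\in\Ical$ for all $g\in G$, then use the multiplicativity identities $\chi(gh)-1=\chi(g)(\chi(h)-1)+(\chi(g)-1)$ and $\chi(g^{-1})-1=-\chi(g)^{-1}(\chi(g)-1)$ to show the set of $g$ satisfying the condition mod $\mathcal{J}=(1-\chi(g_i))$ is a subgroup. You add a genuinely useful point the paper's two-line argument glosses over: the $g_i$ are only \emph{topological} generators of the profinite group $G$, so the algebraic subgroup argument alone only gives the condition on a dense subgroup; your observation that $H$ is the preimage of a closed point under the continuous map $G\to(R/\mathcal{J})^\x$, hence closed, correctly closes that gap (and the Hausdorff hypothesis is satisfied in every application here since $R/\mathcal{J}$ is a quotient of a complete Noetherian local ring).
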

	\begin{proof}
		The maximal congruence of $M$ is by definition the ideal $(1-\chi(g)\mid g\in G)$. Notice that
		\begin{equation*}
		(1-\chi(gg'))=(1-\chi(g)+\chi(g)-\chi(gg'))\subseteq (1-\chi(g))+(\chi(g)-\chi(gg'))=(1-\chi(g))+(1-\chi(g')).
		\end{equation*}
		and that $(1-\chi(g^{-1}))=(\chi(g)-1)$, we have $(1-\chi(g)\mid g\in G)=(1-\chi(g_i)\mid i\in I)$.
	\end{proof}
	When $p>2$, $\Zpx$ is topologically cyclic. When $p=2$, $\Z_2^\x=\{\pm 1\}\x (1+4\Z_2)$ and $1+4\Z_2$ is topologically cyclic. Let $g$ be a topological generator of $\Zpx$ when $p>2$ and a topological generator of $1+4\Z_2$ when $p=2$.
	\begin{thm}\label{thm:rep_congruence}
		The congruences of $\Zp^{\otimes k}[\chi]$ have seven cases:	
		\begin{enumerate}[label=\textup{\Roman*.},align=left]
			\item $p>2$ and the conductor of $\chi$ is $p$ or $1$. In this case, $\chi=\omega^a$ for some integer $0\le a\le p-2$, where $\omega\colon \zpx\to \Zpx$ is the \textbf{$p$-adic Teichm\"uller character}. The image of $\chi$ is contained in $\Zpx$. Then the maximal congruence of $\Zp^{\otimes k}[\omega^a]$ is the following ideal in $\Zp=\Zp[\omega^a]$:
			\begin{equation*}
			(1-g^k\chi(g))=(1-g^k\omega^a(g))=\left\{\begin{array}{cl}
			(p^{v_p(k)+1}), & (p-1)\mid (k+a);\\
			(1)& \textup{otherwise.}
			\end{array}\right.
			\end{equation*}
			\item $p=2$ and the conductor of $\chi$ is $4$ or $1$. In this case, $\chi=\omega^a$ for $a=0$ or $1$, where $\omega\colon \zx{4}\to \Z_2^\x$ is the $2$-adic Teichm\"{u}ller character. As $g\in 1+4\Z_2$, $\omega(g)=1$. Again the image of $\chi$ is contained in $\Z_2^\x$. Then the maximal congruence of $\Z_2^{\otimes k}[\omega^a]$ is the following ideal in $\Z_2=\Z_2[\omega^a]$:
			\begin{equation*}
			(1-g^k\omega^a(g),1-(-1)^k\omega^a(-1))=\left\{\begin{array}{cl}
			(2^{v_p(k)+2}), & 2\mid (k+a);\\ 
			(2), &\textup{otherwise.}
			\end{array}\right.
			\end{equation*}
			\item $p>2$ and the conductor of $\chi$ is $p^v>p$. In this case, $\zx{p^v}\cong  \zpx\x C_{p^{v-1}}$ and  As $\chi$ is primitive of conductor $p^v$, $\chi|_{C_{p^{v-1}}}$ is injective. As a result, $\Zp[\chi]=\Zp[\zeta_{p^{v-1}}]$. $\Zp[\zeta_{p^{v-1}}]$ is a $p$-complete local ring with uniformizer $1-\zeta_{p^{v-1}}$. Write $\chi|_{\zpx}=\omega^{a}$ for some $0\le a\le p-2$. Then the maximal congruence of $\Zp^{\otimes k}[\omega^a]$ is the following ideal in $\Zp[\zeta_{p^{v-1}}]=\Zp[\chi]$:
			\begin{equation*}
			(1-g^k\chi(g))=(1-\zeta_{p^{v-1}}g^k\omega^a(g))=\left\{\begin{array}{cl}
			(1-\zeta_{p^{v-1}}), &(p-1)\mid (k+a);\\
			(1),& \textup{otherwise.}
			\end{array}\right.
			\end{equation*}
			\item $p=2$ and the conductor of $\chi$ is $2^v>4$. In this case, $\zx{2^v}\cong  \zx{4}\x C_{2^{v-2}}$. As $\chi$ is primitive of conductor $2^v$, $\chi|_{C_{2^{v-2}}}$ is injective. As a result, $\Z_2[\chi]=\Z_2[\zeta_{2^{v-2}}]$. $\Z_2[\zeta_{2^{v-2}}]$ is a $2$-complete local ring with uniformizer $1-\zeta_{2^{v-2}}$. Write $\chi|_{\zx{4}}=\omega^{a}$ for $a=0$ or $1$. Then the maximal congruence of $\Z_2^{\otimes k}[\chi]$ is the following ideal in $\Z_2[\zeta_{2^{v-2}}]=\Z_2[\chi]$:
			\begin{equation*}
			(1-\zeta_{2^{v-2}} g^k\omega^a(g),1-(-1)^k\omega^a(-1))=(1-\zeta_{2^{v-2}})\quad \text{for all $k$ and $a$}.
			\end{equation*}
			\item $N'\neq 1$ and $|\imag \chi'|$ is not a power of $p$. In this case, $\imag\chi'$ contains of a root of unity $\zeta_{n'}$ whose order $n'$ is coprime to $p$. As $1-\zeta_{n'}$ is invertible in $\Zp[\zeta_{n'}]\subseteq \Zp[\chi]$, we have the maximal congruence of $\Zp^{\otimes k}[\chi]$ is the ideal $(1)$ in $\Zp[\chi]$.
			
			\item $p>2$, $N'\neq 1$ and $|\imag \chi'|>1$ is a power of $p$. In the case, $\imag \chi'$ is generated by $\zeta_{p^{v'}}$ for some $v'\ge 1$. We have $\Zp^{\otimes k}[\chi]=\Zp[\zeta_{p^{\max(v-1,v')}}]$. Write $\chi_p|_{\zpx}=\omega^a$ for some $0\le a\le p-2$. Then the maximal congruence of $\Zp^{\otimes k}[\chi]$ is the following ideal in $\Zp^{\otimes k}[\chi]=\Zp[\zeta_{p^{\max(v-1,v')}}]$:
			\begin{equation*}
			(1-g^k\chi(g),1-\zeta_{p^{v'}})=(1-\zeta_{p^{v-1}}g^k\omega^a(g),1-\zeta_{p^{v'}})=\left\{\begin{array}{cl}
			(1-\zeta_{p^{\max(v-1,v')}}), &(p-1)\mid (k+a);\\
			(1),& \textup{otherwise.}
			\end{array}\right.
			\end{equation*}
			
			\item $p=2$, $N'\neq 1$ and $\Q_2(\chi')$ is a totally ramified extension of $\Q_2$. In the case, the image of $\chi'$ is generated by $\zeta_{2^{v'}}$ for some $v'\ge 1$. We have $\Z_2^{\otimes k}[\chi]=\Z_2[\zeta_{2^{\max(v',v-2)}}]$. Write $\chi_2|_{\zx{4}}=\omega^a$ for $a=0, 1$.  Then the maximal congruence of $\Z_2^{\otimes k}[\chi]$ is the following ideal in $\Z_2[\zeta_{2^{\max(v',v-2)}}]=\Z_2[\chi]$:
			\begin{equation*}
			(1-\zeta_{2^{v'}}, 1-\zeta_{2^{v-2}}g^k\omega^a(g),1-(-1)^k\omega^a(-1))=(1-\zeta_{2^{\max(v',v-2)}})\quad \text{for all $k$ and $a$}.
			\end{equation*}
		\end{enumerate}
	\end{thm}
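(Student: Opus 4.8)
The whole statement is a computation fed into the preceding lemma: since $\Zp^{\otimes k}[\chi]$ has underlying module $\Zp[\chi]$ of rank one, the $\Zpx\times\zx{N'}$-action is the character $\psi(a,b)=a^k\chi_p(a)\chi'(b)$, and its maximal congruence is the ideal generated by the elements $1-\psi(g_i)$ over any (topological) generating set $\{g_i\}$. So I would first fix generators: for $p>2$ use the splitting $\Zpx=\mu_{p-1}\times(1+p\Zp)$ and write the chosen topological generator as $g=\zeta u$ with $\zeta=\omega(g)$ a primitive $(p-1)$-st root of unity and $u$ a topological generator of $1+p\Zp$; for $p=2$ use $\Z_2^\times=\{\pm1\}\times(1+4\Z_2)$ with generators $-1$ and $g=1+4w$, $w\in\Z_2^\times$; and for the tame factor note that $\imag\chi'$ is cyclic, say $\imag\chi'=\langle\zeta\rangle$, so that $(1-\chi'(b)\mid b\in\zx{N'})=(1-\zeta)$ by the sub-additivity estimate used in the proof of that lemma. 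I would also record $\Zp[\chi]$ in each case: it is $\Zp$ in I--II; the totally ramified ring $\Zp[\zeta_{p^{v-1}}]$ with uniformizer $1-\zeta_{p^{v-1}}$ and ramification index $\phi(p^{v-1})\ge p-1$ in III--IV; and $\Zp[\zeta_{p^{\max(v-1,v')}}]$, resp.\ $\Z_2[\zeta_{2^{\max(v-2,v')}}]$, in VI--VII, which is again totally ramified precisely because the hypotheses there force $\imag\chi'$ to have $p$-power order. Case V is the exception and needs no such identification.

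The computations then follow two patterns. Writing $g=\zeta u$ one has $\psi(g)=\zeta^{k+a}u^k\,\zeta_{p^{v-1}}$, where the last factor is the primitive $p$-power root of unity that $\chi_p$ attaches to $u$ (taken to be $1$ when $\chi_p$ is tame). Reducing modulo the maximal ideal kills $u$ and every root of unity of $p$-power order, leaving $1-\bar\zeta^{\,k+a}$ in the residue field $\mathbb{F}_p$ --- and for $p=2$ the analogous dichotomy is carried by $1-(-1)^{k+a}$. Hence $1-\psi(g)$ is a unit exactly when $(p-1)\nmid(k+a)$, which gives the ideal $(1)$; otherwise $\zeta^{k+a}=1$ and one is left with $1-u^k\zeta_{p^{v-1}}=(1-\zeta_{p^{v-1}})+\zeta_{p^{v-1}}(1-u^k)$. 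Here I would invoke the lifting-the-exponent identity $v_p(u^k-1)=v_p(u-1)+v_p(k)$, with $v_p(u-1)=1$ for $p$ odd and $=2$ for $p=2$: in the cyclotomic ring at hand $v_\pi(1-u^k)=e\bigl(v_p(k)+v_p(u-1)\bigr)$ with $e\ge p-1>1$ the ramification index, so the two summands have different $\pi$-valuation and the smaller one wins. This yields $(p^{v_p(k)+1})$ and $(2^{v_p(k)+2})$ in I--II (where $\Zp[\chi]=\Zp$, so there is nothing larger to absorb), and in III--IV, VI--VII it shows that among the generators $1-u^k\zeta_{p^{v-1}}$, $1-\zeta_{p^{v'}}$, and $1-(-1)^{k+a}$ there is always one of $\pi$-valuation $1$, forcing the whole ideal to be the maximal ideal $(1-\zeta_{p^{\max(v-1,v')}})$. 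Case V is the easy outlier: $\imag\chi'$ contains a primitive root of unity of order $>1$ coprime to $p$, and $1$ minus such a root is a $p$-adic unit, so the ideal is $(1)$.

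The work is bookkeeping rather than conceptual, and the place to be careful is the prime $2$: one must keep the extra generator $-1$, work with $1+4\Z_2$ rather than $1+2\Z_2$, and use the $p=2$ shape of lifting the exponent. The one genuine (though still elementary) obstacle is the last verification in III--IV and VI--VII: across all orderings of $v-1$ (or $v-2$) against $v'$, one must check that a generator of $\pi$-valuation $1$ is always present, so that the combined ideal collapses exactly to the maximal ideal rather than to something deeper.
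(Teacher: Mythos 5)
The paper records this theorem as the output of a computation without writing out a proof, remarking only that it is "easy to compute since the group is topologically finitely generated," so there is no paper argument to compare against. Your derivation is correct and is exactly the computation the paper implicitly calls for: apply the preceding lemma to topological generators of $\Zpx\times\zx{N'}$, split off the tame factor $\zeta^{k+a}$ to get the $(p-1)\mid(k+a)$ dichotomy, use lifting-the-exponent to compute $v_\pi(1-u^k)$, and compare $\pi$-valuations across the generators to see that the ideal collapses to the maximal ideal in the totally ramified cases.
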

	\subsection{Realizations of the maximal congruence}
	Having computed the maximal congruence of the $\Zpx\x\zx{N'}$-representation $\Zp^{\otimes}[\chi]$, now we give explicit examples of modular forms in the Eisenstein subspace $\Ecal_{k}(p^v,\Gamma_1(N'),\chi)$ whose $q$-expansions realize the maximal congruence.
	
	Let $k\ge 3$ be an integer such that $(-1)^k=\chi(-1)$. Recall from \Cref{thm:eisenstein_subspace_basis} and \Cref{cor:p-adic_basis} that Eisenstein subspace $\Ecal_{k}(p^v,\Gamma_1(N'),\chi)\otimes \Qp$ is spanned by Eisenstein series of the forms:
	\begin{align*}
	E_{k,\chi^0,\chi}(q^t)=c\cdot E_{k,\chi}(q^t)&=~c\cdot\left(1-\frac{2k}{B_{k,\chi}}\sum_{n\ge 1}\left(\sum_{0<d\mid n}\chi(d)d^{k-1} \right)q^{nt}\right),\\
	E_{k,\chi_1,\chi_2}(q^t)&=\sum_{n\ge 1}\left(\sum_{0<d\mid n}\chi_1^{-1}(n/d)\chi_2(d)d^{k-1}\right)q^{nt},
	\end{align*}
	where \begin{itemize}
		\item $c$ is $\Zp[\chi]$ with the smallest valuation so that $E_{k,\chi^0,\chi}(q^t)\in \Zp[\chi]\llb q\rrb$.
		\item $\chi_1$ and $\chi_2$ are characters of conductors $N_1$ and $N_2$ satisfying $\chi_1/\chi_2=\chi^{-1}$ and $(N_1N_2t)\mid N$.
	\end{itemize} 
	By the $q$-expansion principle \Cref{prop:q-exp}, an element of $\Ecal_{k}(p^v,\Gamma_1(N'),\chi)$ is a $\Qp$-linear combination $f(q)$ of these $E_{k,\chi_1,\chi_2}(q)$ such that $f(q)\in\Zp[\chi]\llb q\rrb$. Write $E_{k,\chi^0,\chi}$  and $E_{k,\chi_1,\chi_2}$ for $E_{k,\chi^0,\chi}(q)$  and $E_{k,\chi_1,\chi_2}(q)$, respectively. Using the arithmetic properties of generalized Bernoulli numbers in \cite[Theorems 1 and 3]{Carlitz_GBN}, we can check
	\begin{prop}\label{prop:E_kchi_max_congruence}
		In Cases I-V in \Cref{thm:rep_congruence}, the Eisenstein series $E_{k,\chi}$ realizes the maximal congruence predicted in \Cref{thm:main}.
	\end{prop}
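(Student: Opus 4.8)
The plan is to turn the statement into a single divisibility for the generalized Bernoulli number $B_{k,\chi}$ and then to extract that divisibility from its classical arithmetic. By \Cref{thm:main}, the maximal congruence of $\Ecal_k(p^v,\Gamma_1(N'),\chi)$ is the ideal $\Ical_{\max}\trianglelefteq\Zp[\chi]$ computed, case by case, in \Cref{thm:rep_congruence}; so it suffices to show that in Cases I--V one has $E_{k,\chi}(q)\in 1+\Ical_{\max}\,q\llb q\rrb$. Once this is known, the congruence ideal $\mathcal J$ of $E_{k,\chi}$ --- the ideal generated by the non-constant $q$-coefficients of $E_{k,\chi}(q)$ --- satisfies $\mathcal J\subseteq\Ical_{\max}$ by construction and $\mathcal J\supseteq\Ical_{\max}$ by the minimality in \Cref{thm:main} (apply the equivalence (i)$\iff$(v) with $\Ical=\mathcal J$), hence $\mathcal J=\Ical_{\max}$; in particular $\Ical_{\max}\subseteq\Zp[\chi]$ forces $E_{k,\chi}$ to be $p$-integral, so it is a genuine element of $\Ecal_k(p^v,\Gamma_1(N'),\chi)$ realizing $\Ical_{\max}$. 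Finally, since $\sigma_{k-1,\chi}(1)=\chi(1)=1$ and every coefficient $\sigma_{k-1,\chi}(n)=\sum_{0<d\mid n}\chi(d)d^{k-1}$ lies in $\Zp[\chi]$, the containment $E_{k,\chi}(q)=1-\tfrac{2k}{B_{k,\chi}}\sum_n\sigma_{k-1,\chi}(n)q^n\in 1+\Ical_{\max}\,q\llb q\rrb$ is equivalent to the single assertion $\tfrac{2k}{B_{k,\chi}}\in\Ical_{\max}$.

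It remains to prove $\tfrac{2k}{B_{k,\chi}}\in\Ical_{\max}$ in Cases I--V, which I would do by computing the valuation of $\tfrac{2k}{B_{k,\chi}}$. In each of these five cases $\Ical_{\max}$ is principal --- it is $(1)$ in Case V, the maximal ideal $(1-\zeta_{p^{v-1}})$ in Case III and $(1-\zeta_{2^{v-2}})$ in Case IV, and a power $(p^{v_p(k)+1})$ in Case I, $(2^{v_2(k)+2})$ in Case II, the alternative values $(1)$ in Cases I, III and $(2)$ in Case II being governed by whether the resonance condition $(p-1)\mid(k+a)$ holds, where $\chi_p|_{\zpx}=\omega^a$ (note that in Case II the parity constraint $(-1)^k=\chi(-1)$ forces $2\mid(k+a)$) --- and in all five cases $\Zp[\chi]$ is a discrete valuation ring, so the claim reduces to a lower bound $v\bigl(\tfrac{2k}{B_{k,\chi}}\bigr)\ge v(\text{generator of }\Ical_{\max})$ for the normalized valuation $v$. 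I would obtain this from the two inputs the paper cites: Carlitz's von Staudt--Clausen theorem \cite[Theorem 1]{Carlitz_GBN}, which identifies the exact power of $p$ in the denominator of $B_{k,\chi}$ --- it is $p^{-1}$ precisely when $\chi_p=\omega^{-k}$ on $\zpx$ (the resonance condition), and trivial otherwise --- and Carlitz's Kummer-type congruences \cite[Theorem 3]{Carlitz_GBN}, $\tfrac{B_{k,\chi}}{k}\equiv\tfrac{B_{k',\chi}}{k'}\pmod{p^{N+1}}$ for $k\equiv k'\pmod{(p-1)p^{N}}$, which supply the extra factor $v_p(k)$ appearing in Cases I and II and, in Cases III--V, confine the $p$-contribution of $B_{k,\chi}$ to its denominator and match it to the ramification already recorded in $\Ical_{\max}$. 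Comparing the resulting valuation of $\tfrac{2k}{B_{k,\chi}}$ case by case with the generators of \Cref{thm:rep_congruence} then finishes the proof.

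The conceptual skeleton above is short; the main obstacle is the case-by-case $p$-adic bookkeeping, which is delicate in two places. First, at $p=2$ (Cases II and IV) one must carefully track the extra powers of $2$ coming from $-1\in\Z_2^\times$ and from the two-step filtration $\Z_2^\times=\{\pm1\}\times(1+4\Z_2)$; this is the source of the ``factor of $2$'' by which the paper's statement departs from Carlitz's formulation, and it must be reconciled directly. Second, in the ramified Cases III and IV one works inside $\Zp[\chi]=\Zp[\zeta_{p^{v-1}}]$ (totally ramified over $\Zp$), so a statement like ``$p$ exactly divides the denominator of $B_{k,\chi}$'' must be pushed through the ramification index into the normalized valuation, and one must check that it is exactly the resonance condition $(p-1)\mid(k+a)$ that distinguishes $\Ical_{\max}=(1-\zeta_{p^{v-1}})$ from $\Ical_{\max}=(1)$; this is where the mod-$p^{N+1}$ strength of Carlitz's congruences, not merely the mod-$p$ version, is used. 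A secondary subtlety, already folded into the use of \cite[Theorem 3]{Carlitz_GBN} above, is verifying in Cases I and III--V that the $p$-part of the numerator of $B_{k,\chi}$ does not interfere --- precisely the feature that separates Cases I--V from Cases VI--VII, where the numerator of $\tfrac{B_{k,\chi}}{2k}$ genuinely contributes. Once these points are settled, the remaining verifications are routine.
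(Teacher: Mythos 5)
Your approach is the same as the paper's: the paper gives no detailed proof of Proposition \ref{prop:E_kchi_max_congruence}, simply asserting that it ``can be checked'' from \cite[Theorems 1, 3]{Carlitz_GBN}, and your proposal spells out that check. The reduction of the whole statement to the single membership $\tfrac{2k}{B_{k,\chi}}\in\Ical_{\max}$, via $\sigma_{k-1,\chi}(1)=1$ and the $\Zp[\chi]$-integrality of the $\sigma_{k-1,\chi}(n)$, is exactly the right organizing step, and comparing the valuation of $\tfrac{2k}{B_{k,\chi}}$ against the generators recorded in \Cref{thm:rep_congruence} using the generalized von Staudt--Clausen theorem (denominator) and the Kummer congruences (the $v_p(k)$ jump, and propagation across Teichm\"uller twists) is precisely the bookkeeping the paper has in mind.

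One point deserves a sharper flag than your ``secondary subtlety'' remark gives it. Carlitz's Theorems 1 and 3 control only the \emph{denominator} of $\tfrac{B_{k,\chi}}{2k}$; they do not rule out $p$ dividing its \emph{numerator}, and saying that Theorem 3 ``confines the $p$-contribution of $B_{k,\chi}$ to its denominator'' overstates what the Kummer congruences give. In Case I with $\chi$ trivial this is exactly the irregular-prime phenomenon: for $(p,k)=(691,12)$ one has $(p-1)\nmid k$, so $\Ical_{\max}=(1)$, yet $691\mid B_{12}$, hence $\tfrac{2k}{B_{k}}\notin\Zp$ and the normalized $E_{12}$ is not even $p$-integral. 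Your proof would therefore break at the step ``$v(\tfrac{2k}{B_{k,\chi}})\ge v(\text{generator of }\Ical_{\max})$'' for such $(p,k)$. This is not a defect you introduced --- the paper's one-line citation of Carlitz leaves the same hole, and indeed the literal statement of the Proposition (and hence of condition (i) of \Cref{thm:main} with $\Ical=(1)$) seems to need either an implicit $p$-integrality hypothesis on $E_{k,\chi}$ or an appeal to a larger $p$-adic Eisenstein subspace --- but your write-up should not suggest that Carlitz's theorems close it.
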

	By \cite[Theorem 1]{Carlitz_GBN}, $\frac{B_{k,\chi}}{k}$ is an algebraic $p$-adic integer when $N$ is not a power of $p$. As a result $E_{k,\chi}(q)$ does not realize the maximal congruence in Cases VI and VII in \Cref{thm:rep_congruence}. Instead, we can consider linear combinations of basis in the Eisenstein subspace. In general, it is hard to write down the explicit formulas of modular forms that satisfies the maximal congruence predicted in \Cref{thm:main} and \Cref{thm:rep_congruence} in Cases VI and VII. Here, we work out one of the simplest cases in the rest of this subsection.
	\begin{exmp}
		Consider the character $\chi\colon\zx{\ell}\to \Cpx$, where $\ell$ is a prime different from $p>2$ and $\Q_p(\chi)$ is a totally ramified extension of $\Qp$.  In this case, $\Zp[\chi]=\Zp[\zeta_{p^m}]$ for some $m\ge 1$ is a $p$-complete local ring with uniformizer $\varpi=1-\zeta_{p^m}$. Write the maximal ideal of $\Zp[\chi]$ by $\mfrak$. By \cite[Theorem 1]{Carlitz_GBN}, $\frac{B_{k,\chi}}{2k}$ is an algebraic $p$-adic integer. As a result, we can take $E_{k,\chi^0,\chi}$ to be:
		\begin{equation*}
		E_{k,\chi^0,\chi}=\frac{B_{k,\chi}}{2k}-\sum_{n\ge 1}\left(\sum_{0<d\mid n}\chi(d)d^{k-1} \right)q^{n}.
		\end{equation*}
		
		Comparing \Cref{thm:main} and Case VI in \Cref{thm:rep_congruence}, we should expect to find a modular form of weight $k$, level $\Gamma_1(\ell)$, and character $\chi$ that is congruent to $1$ modulo $\mfrak=(\varpi)$ only when $(p-1)\mid k$, and there is no modular form of level $\Gamma_1(\ell)$ and character $\chi$ that is congruent to $1$ modulo $\mfrak^2$. The Eisenstein subspace in this case $\Ecal_k(\Gamma_1(\ell),\chi)$ is spanned by $E_{k,\chi^0,\chi}(q)$ and $E_{k,\chi^{-1},\chi^0}(q)$. 
		
		When $(p-1)\mid k$, the maximal congruence is realized by a linear combination of the two basis Eisenstein series with some cusp forms potentially, since neither of them satisfies the maximal congruence relation.  As $\frac{B_{k,\chi}}{2k}\in \Zp[\chi]$, we have $\frac{B_{k,\chi}}{2k}\cdot E_{k,\chi}\in \Zp[\chi]\llb q\rrb$. Notice that the coefficients of $q$ in  $E_{k,\chi^0,\chi}$ and $E_{k,\chi^{-1},\chi^0}$ are $-1$ and $1$, respectively. Consider the $q$-expansion of their sum:
		\begin{equation}\label{eqn:max_cong_qexp}
		E_{k,\chi^0,\chi}+E_{k,\chi^{-1},\chi^0}=\frac{B_{k,\chi}}{2k}+\sum_{n\ge 1}a_nq^n=\frac{B_{k,\chi}}{2k}+\sum_{n\ge 1}\left(\sum_{0<d\mid n}(\chi^{-1}(n/d)-\chi(d))d^{k-1} \right)q^n.
		\end{equation}
		\begin{lem}\label{lem:q-common_factor}
			$E_{k,\chi^0,\chi}+E_{k,\chi^{-1},\chi^0}\equiv \frac{B_{k,\chi}}{2k}\mod \mfrak q\llb q\rrb$ for all $k$ with $(-1)^k=\chi(-1)$.
		\end{lem}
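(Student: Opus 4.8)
The plan is to compute the $q$-expansion coefficients directly from \eqref{eqn:max_cong_qexp} and reduce them modulo the uniformizer $\varpi$ of the discrete valuation ring $\Zp[\chi]=\Zp[\zeta_{p^m}]$, so that $\mfrak=(\varpi)$. The constant term of $E_{k,\chi^0,\chi}+E_{k,\chi^{-1},\chi^0}$ is exactly $\frac{B_{k,\chi}}{2k}$, so it suffices to prove that for every $n\ge 1$ the coefficient
\begin{equation*}
a_n=\sum_{0<d\mid n}\bigl(\chi^{-1}(n/d)-\chi(d)\bigr)d^{k-1}
\end{equation*}
lies in $\mfrak$.

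The main point is that $\Qp(\chi)/\Qp$ is totally ramified, so $\Zp[\chi]$ has residue field $\F_p$ and every nonzero value of $\chi$, being a root of unity of $p$-power order, is congruent to $1$ modulo $\varpi$; since $\chi(d)=0$ whenever $\ell\mid d$, this gives $\chi(d)\equiv 1\pmod{\varpi}$ if $\ell\nmid d$ and $\chi(d)\equiv 0\pmod{\varpi}$ if $\ell\mid d$, and likewise for $\chi^{-1}$. Hence modulo $\mfrak$ the coefficient $a_n$ becomes a difference of two divisor sums,
\begin{equation*}
a_n\equiv \sum_{\substack{d\mid n\\ \ell\nmid n/d}}d^{k-1}\;-\;\sum_{\substack{d\mid n\\ \ell\nmid d}}d^{k-1}\pmod{\mfrak}.
\end{equation*}
Next I would stratify the divisors of $n$ by their $\ell$-adic valuation: writing $n=\ell^{j}m$ with $\ell\nmid m$, every divisor of $n$ is uniquely $\ell^{i}e$ with $0\le i\le j$ and $e\mid m$, and then $\ell\nmid n/d\iff i=j$ while $\ell\nmid d\iff i=0$. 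So the two sums collapse to $\ell^{j(k-1)}\sum_{e\mid m}e^{k-1}$ and $\sum_{e\mid m}e^{k-1}$ respectively, giving
\begin{equation*}
a_n\equiv\bigl(\ell^{j(k-1)}-1\bigr)\sum_{e\mid m}e^{k-1}\pmod{\mfrak}.
\end{equation*}

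The crux is then the last step, where the totally ramified hypothesis enters: since $\imag\chi$ has order $p^m>1$ and sits inside the cyclic group $\zx{\ell}$ of order $\ell-1$, we get $p\mid \ell-1$, hence $\ell\equiv 1\pmod p$ and $p\mid \ell^{j(k-1)}-1$; as $p\in\mfrak$, this forces $a_n\in\mfrak$ for all $n\ge 1$, which is what we wanted. There is no real obstacle beyond keeping the convention $\chi(d)=0$ for $\ell\mid d$ consistent throughout the divisor sums; everything else is a routine reduction modulo $\varpi$. Note that the parity hypothesis $(-1)^k=\chi(-1)$ is not used in this lemma except implicitly, through \eqref{eqn:max_cong_qexp}, to ensure that $E_{k,\chi^0,\chi}$ and $E_{k,\chi^{-1},\chi^0}$ lie in the same eigenspace.
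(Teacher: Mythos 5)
Your proof is correct and follows essentially the same route as the paper's: both isolate the $\ell$-power part of $n$, reduce the coefficient $a_n$ to a product with a factor of the form $\ell^{j(k-1)}-1$ (the paper keeps the exact form $\chi^{-1}(n')\ell^{v(k-1)}-1$ before reducing), and then invoke $p\mid\ell-1$ together with the fact that $p$-power roots of unity are $\equiv 1\pmod\varpi$. The only cosmetic difference is that you reduce modulo $\mfrak$ at the outset, collapsing all nonzero $\chi$-values to $1$, whereas the paper factorizes the exact expression and only passes to $\mfrak$ in the final step; the content is the same.
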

		\begin{proof}
			We need to show the coefficient $a_n$ of $q^n$ in \eqref{eqn:max_cong_qexp} is in $\mfrak$ for all $n\ge 1$. Write $n=\ell^w n'$ where $\ell\nmid n'$. Since the conductor of $\chi$ is the prime number $\ell$, $\chi(a)=0$ iff $\ell\mid a$. As a result, we have
			\begin{align}
			a_n=\sum_{0<d\mid n}(\chi^{-1}(n/d)-\chi(d))d^{k-1}
			&=\sum_{0<d\mid n}\chi^{-1}(n/d)d^{k-1}-\sum_{0<d\mid n}\chi(d)d^{k-1}\nonumber\\
			&=\sum_{\ell^w\mid d\mid n}\chi^{-1}(n/d)d^{k-1}-\sum_{0<d\mid n'}\chi(d)d^{k-1}\nonumber\\
			(\text{set }d=\ell^w d'\text{ in the first summation})\quad &=\sum_{0<d'\mid n'}\chi^{-1}(n'/d')(\ell^wd')^{k-1}-\sum_{0<d\mid n'}\chi(d)d^{k-1}\nonumber\\
			&=\sum_{0<d'\mid n'}\chi^{-1}(n')\chi(d')\ell^{w(k-1)}d'^{k-1}-\sum_{0<d\mid n'}\chi(d)d^{k-1}\nonumber\\
			&=~(\chi^{-1}(n')\ell^{w(k-1)}-1)\sum_{0<d\mid n'}\chi(d)d^{k-1}.\label{eqn:a_n}
			\end{align}
			Since $\zx{\ell}$ surjects onto $C_{p^m}$ by assumption, there is a congruence $\ell\equiv 1\mod p$. This implies $\ell^{w(k-1)}\equiv 1\mod p$. Also, as $\chi^{-1}(n')\neq 0$ is a $p$-power root of unity, we have $1-\chi^{-1}(n')\in \mfrak$. Combining these two facts, we conclude
			\begin{equation*}
			\chi^{-1}(n')\ell^{w(k-1)}-1=\chi^{-1}(n')-1+\chi^{-1}(n')(\ell^{w(k-1)}-1)\in \mfrak
			\end{equation*}
			for all $n'$ not divided by $\ell$. This shows $a_n\in \mfrak$ for all $n$. From this, we conclude  $E_{k,\chi^0,\chi}+E_{k,\chi^{-1},\chi^0}\equiv \frac{B_{k,\chi}}{2k}\mod  \mfrak q\llb q\rrb$. 
		\end{proof}
		\begin{prop}\label{prop:numerator_valuation}
			The algebraic $p$-adic integer $\frac{B_{k,\chi}}{2k}$ is in $\mfrak$ if $(p-1)\nmid k$. 
		\end{prop}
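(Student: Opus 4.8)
The plan is to deduce \Cref{prop:numerator_valuation} from the Main Theorem by way of the intermediate claim that $\tfrac{B_{k,\chi}}{2k}$ is a unit of $\Zp[\chi]$ if and only if there is an Eisenstein series $f\in\Ecal_k(\Gamma_1(\ell),\chi)$ with $f(q)\in 1+\mfrak q\llb q\rrb$. Granting this, \Cref{thm:main} together with Case~VI of \Cref{thm:rep_congruence} (where $v=0$ and $a=0$, as $\chi_p$ is trivial) identify the latter condition with $(p-1)\mid k$, which is precisely the assertion $\tfrac{B_{k,\chi}}{2k}\in\mfrak\iff(p-1)\nmid k$. One half of the intermediate claim is immediate: by \cite[Theorem~1]{Carlitz_GBN} the series $g:=E_{k,\chi^0,\chi}+E_{k,\chi^{-1},\chi^0}$ has coefficients in $\Zp[\chi]$, hence lies in $\Ecal_k(\Gamma_1(\ell),\chi)$, with constant term $\tfrac{B_{k,\chi}}{2k}$ and, by \Cref{lem:q-common_factor}, with $g(q)\equiv\tfrac{B_{k,\chi}}{2k}\mod\mfrak q\llb q\rrb$; so when $\tfrac{B_{k,\chi}}{2k}$ is a unit, $f:=\bigl(\tfrac{B_{k,\chi}}{2k}\bigr)^{-1}g$ works. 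Taking the contrapositive and invoking \Cref{thm:main} and \Cref{thm:rep_congruence} already gives $(p-1)\nmid k\Rightarrow\tfrac{B_{k,\chi}}{2k}\in\mfrak$.

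The substantive part is the other half: if some $f\in\Ecal_k(\Gamma_1(\ell),\chi)$ satisfies $f(q)\in 1+\mfrak q\llb q\rrb$, then $r:=v_\varpi\bigl(\tfrac{B_{k,\chi}}{2k}\bigr)$ is $0$. First I would write $f=\alpha E_{k,\chi^0,\chi}+\beta E_{k,\chi^{-1},\chi^0}$ with $\alpha,\beta\in\Qp(\chi)$, using \Cref{thm:eisenstein_subspace_basis}. Comparing constant terms forces $\alpha=\bigl(\tfrac{B_{k,\chi}}{2k}\bigr)^{-1}$, so $v_\varpi(\alpha)=-r$; comparing the coefficients of $q$ (which are $-1$ in $E_{k,\chi^0,\chi}$ and $1$ in $E_{k,\chi^{-1},\chi^0}$) forces $\beta-\alpha\in\mfrak$. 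Next I would choose, via Dirichlet's theorem on primes in arithmetic progressions, a prime $\ell_0\neq\ell$ with $\ell_0\equiv 1\mod p$ whose class modulo $\ell$ makes $\zeta:=\chi(\ell_0)$ a \emph{primitive} $p^m$-th root of unity---such a class exists because $\chi$ surjects onto $\mu_{p^m}$. Using $\sigma_{k-1,\chi}(\ell_0)=1+\zeta\ell_0^{k-1}$ and $\sum_{d\mid\ell_0}\chi^{-1}(\ell_0/d)\,d^{k-1}=\zeta^{-1}+\ell_0^{k-1}$, the identity $\zeta^{-1}-1=\zeta^{-1}(1-\zeta)$, and $\beta=\alpha+(\beta-\alpha)$, the coefficient of $q^{\ell_0}$ in $f$ reduces to
\[
\bigl(\alpha(1-\zeta)+(\beta-\alpha)\bigr)\bigl(\zeta^{-1}+\ell_0^{k-1}\bigr).
\]
Since $\zeta$ has exact order $p^m$, the factor $1-\zeta$ is a uniformizer of $\Zp[\chi]$, so $v_\varpi\bigl(\alpha(1-\zeta)\bigr)=1-r$; since $\ell_0\equiv 1\mod p$ and $p>2$, the factor $\zeta^{-1}+\ell_0^{k-1}\equiv 2\not\equiv 0\mod\mfrak$ is a unit; and $v_\varpi(\beta-\alpha)\ge 1$. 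If $r\ge 1$, then $1-r\le 0<1\le v_\varpi(\beta-\alpha)$, so the displayed coefficient has $\varpi$-valuation exactly $1-r$: for $r=1$ it is a unit, contradicting $f(q)\in 1+\mfrak q\llb q\rrb$, and for $r\ge 2$ it fails to lie in $\Zp[\chi]$, contradicting that $f$ has coefficients in $\Zp[\chi]$. Hence $r=0$.

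I expect this second step to be the main obstacle. It forces one to look past the constant term and the coefficient of $q$ in $f$ and to engineer the auxiliary prime $\ell_0$ so as to avoid two separate degeneracies at once: $\chi(\ell_0)$ must have \emph{exact} order $p^m$ so that $1-\chi(\ell_0)$ is a uniformizer of $\Zp[\chi]$, and $\ell_0\equiv 1\mod p$ so that $\zeta^{-1}+\ell_0^{k-1}$ stays a unit---only then does the $\varpi$-adic bookkeeping in the last display close off the cases $r\ge 1$. Everything else is already available: the two-element basis $\{E_{k,\chi^0,\chi},E_{k,\chi^{-1},\chi^0}\}$ of $\Ecal_k(\Gamma_1(\ell),\chi)$ with the $q$-expansion formulas for $E_{k,\chi_1,\chi_2}$, the fact that $\Zp[\chi]=\Zp[\zeta_{p^m}]$ is a discrete valuation ring with uniformizer $\varpi=1-\zeta_{p^m}$, and the $p$-integrality of $\tfrac{B_{k,\chi}}{2k}$ from \cite{Carlitz_GBN}.
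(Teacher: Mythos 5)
Your proposal is correct and follows essentially the same route as the paper: both deduce the valuation statement from the Main Theorem and Case~VI of \Cref{thm:rep_congruence} by studying the two-element basis $\{E_{k,\chi^0,\chi},E_{k,\chi^{-1},\chi^0}\}$, using \Cref{lem:q-common_factor} for the easy direction, and for the substantive direction pinning down the $\varpi$-valuation of the coefficient of $q^{\ell_0}$ for a carefully chosen auxiliary prime at which $\chi$ takes a primitive $p^m$-th root of unity value. The only cosmetic difference is the constraint placed on the auxiliary prime (you impose $\ell_0\equiv 1\bmod p$, while the paper imposes $p'\not\equiv -1\bmod p$ and uses $(p-1)\mid k$ to make $1+\chi(p')p'^{k-1}$ a unit); your choice is marginally cleaner since it sidesteps any dependence on the divisibility of $k$.
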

		\begin{proof}
			When $(p-1)\nmid k$, there are no modular forms in $H^0(\Mello(p^v,\Gamma_1(N')),\bfo{k}\otimes\Zp[\chi])^{\chi^{-1}}$  whose $q$-expansion is in $1+\mfrak q\llb q\rrb$ by \Cref{thm:main} and Case VI in \Cref{thm:rep_congruence}. In particular, this applies to modular forms in the Eisenstein subspace $\Ecal_k(\Gamma_1(\ell),\chi)$. In \Cref{lem:q-common_factor}, we showed all the $a_n$'s in \eqref{eqn:max_cong_qexp} are in $\mfrak$. Then its constant term $\frac{B_{k,\chi}}{2k}$ must also be in $\mfrak$ so that there is a non-trivial common factor.
		\end{proof}
		\begin{rem}
			Numerical experiments with  \cite{sagemath} indicate that:
			\begin{itemize}
				\item When $(p-1)\nmid k$, it is possible that $\frac{B_{k,\chi}}{2k}\in \mfrak^s$ for some $s>1$. ($\ell=67, p=11, k\equiv 4\mod (p-1)$)
				\item When $(p-1)\mid k$, $\frac{B_{k,\chi}}{2k}\not\in \mfrak$ for all cases tested. Assuming the maximal congruence $f(q)\equiv 1  \mod \mfrak$ is realized by an element in the Eisenstein space when $(p-1)\mid k$, then we can prove that $\frac{B_{k,\chi}}{2k}\not\in \mfrak$. 
			\end{itemize}
		\end{rem}
	\end{exmp}
	\subsection{Congruence and group cohomology}
	Let $\Zp^{\otimes k}[\chi]$ be the $\Zpx\x \zx{N'}$-representation associated to the character $\Zpx\x\zx{N'}\xrightarrow{(a,b)\mapsto \chi_p(a)\chi'(b)a^k}(\Zp[\chi])^\x$. The maximal congruence of $\Zp^{\otimes k}[\chi]$ as a $\Zpx\x\zx{N'}$-representation is closely related to its group cohomology. Suppose $(R,\mfrak)$ is a $p$-complete discrete valuation ring and let $\varpi\in \mfrak$ be a uniformizer. For a torsion-free $R$-module $M$, the total quotient module $M/\varpi^\infty$ can be defined from a short exact sequence of $R$-modules:
	\begin{equation}\label{eqn:chrres}
	\begin{tikzcd}
	0\rar&M\rar&M[\varpi^{-1}]\rar&M/\varpi^{\infty}\rar&0,
	\end{tikzcd}
	\end{equation}
	where $M[\varpi^{-1}]:=M\otimes_R R[\varpi^{-1}]$. 
	\begin{rem}
		\eqref{eqn:chrres} is the colimit of the following tower of short exact sequences:
		\begin{equation*}
		\begin{tikzcd}
		0\rar& M\rar["\varpi"]\dar[equal]&M\rar\dar["\varpi"]&M/\varpi\rar\dar&0\\
		0\rar& M\rar["\varpi^2"]\dar[equal]&M\rar\dar["\varpi"]&M/\varpi^2\rar\dar&0\\
		0\rar& M\rar["\varpi^3"]\dar[equal]&M\rar\dar["\varpi"]&M/\varpi^3\rar\dar&0\\
		&\cdots&\cdots&\cdots&
		\end{tikzcd}
		\end{equation*}
	\end{rem}
	When $M$ is a $G$-representation in $R$-modules, we note the short exact sequence above is $G$-equivariant. It is straightforward to check:
	\begin{lem}
		Let $G$ be a group and $\rho\colon G\to R^\times$ be a group homomorphism (character), which induces a $G$-action on $R$ as an $R$-module. Denote this representation by $M$. Then either $M$ is a trivial representation (i.e. $M^G=M$), or the maximal congruence of $M$ is given by the an ideal $\mfrak^r$ such that $(M/\varpi^\infty)^G\cong M/\mfrak^r$. 
	\end{lem}
	\begin{lem}\label{lem:delta_injective}
		Assumptions and notation as the above. Suppose $G$ is topologically finitely generated. When $M^G=0$, there is a natural injection $\delta\colon (M/\varpi^\infty)^G\to H_c^1(G;M)$.
	\end{lem}
	\begin{proof}
		Apply $H_c^*(G;-)$ on \eqref{eqn:chrres}, we get a long exact sequence of $G$ cohomology that start with:
		\begin{equation*}
		\begin{tikzcd}
		0\rar& M^G\rar& (M[\varpi^{-1}])^G\rar& (M/\varpi^\infty)^G\rar["\delta"]& H_c^1(G;M)\rar& H_c^1(G;M[\varpi^{-1}])\rar&\cdots
		\end{tikzcd}
		\end{equation*}
		The fixed points $(M[\varpi^{-1}])^G=0$ since \begin{equation*}
		(M[\varpi^{-1}])^G=\left(\colim(M\xrightarrow{\varpi}M\xrightarrow{\varpi}\cdots)\right)^G\cong  \colim(M^G\xrightarrow{\varpi}M^G\xrightarrow{\varpi}\cdots)=(M^G)[\varpi^{-1}]=0.
		\end{equation*} 
		Here we used the fact the finite limit $(-)^G$ (as $G$ is topologically finitely generated) commutes with the filtered colimit $(-)[\varpi^{-1}]$ by  \cite[Theorem 1 in Section IX.2]{MacLane_Categories}. This shows $\delta$ is injective. 	
	\end{proof}
	\begin{prop}\label{prop:cong_gp_coh}
		When $\Zp^{\otimes k}[\chi]$ is a nontrivial $\znx$-representation, i.e. either $k\ne 0$ or $\chi$ is nontrivial, the connecting homomorphism $\delta\colon (\Zp^{\otimes k}[\chi]/\varpi^\infty)^{\Zpx\x\zx{N'}}\to H_c^1(\Zpx\x\zx{N'};\Zp^{\otimes k}[\chi])$ is an isomorphism. 
	\end{prop}
	\begin{proof}
		The injectivity of $\delta$ follows from \Cref{lem:delta_injective}, since the group $\Zpx\x\zx{N'}$ is topologically finitely generated and $\Zp^{\otimes k}[\chi]^{\Zpx\x\zx{N'}}=0$ when $k\ne 0$ or $\chi$ is nontrivial.  
		
		As $p$ is a power of the uniformizer $\varpi$, there are isomorphisms $M[\varpi^{-1}]\cong M[1/p]\cong \Qp^{\otimes k}(\chi)$. Next, we  show that  $H_c^1(\Zpx\x\zx{N'};\Qp^{\otimes k}(\chi))=0$. Note that group cohomology of the finite subgroup $\zpx\times \zx{N'}$ with rational coefficients vanishes in postive degrees. The Hochschild-Serre spectral sequence then implies 
		\begin{equation*}
			H_c^1(\Zpx\x\zx{N'};\Qp^{\otimes k}(\chi))\cong H^0_c(\zpx\times \zx{N'};H^1_c(1+p\Zp; \Qp^{\otimes k}(\chi))).
		\end{equation*}
		The claim then follows from an explicit computation that $H^1_c(1+p\Zp; \Qp^{\otimes k}(\chi)))=0$ when either $k\ne 0$ or $\chi$ is non-trivial. 
	\end{proof}
	Now combining \Cref{thm:main} and \Cref{prop:cong_gp_coh} yields:
	\begin{cor}\label{cor:cong_gp_coh}
		The followings are equivalent:
		\begin{enumerate}
			\item $\Ical\trianglelefteq \Zp[\chi]$ is the maximal congruence of modular forms in $H^0(\Mello(\Gamma_0(N')),\bfomega^{k,\chi})$.
			\item $H_c^1(\Zpx\x\zx{N'};\Zp^{\otimes k}[\chi])\cong  \Zp[\chi]/\Ical$.
		\end{enumerate}
	\end{cor}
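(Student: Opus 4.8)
The plan is to chain together three results already established: the equivalence (i) $\iff$ (v) of \Cref{thm:main}, the description of the maximal congruence of a representation in terms of truncated invariants (the Lemma preceding \Cref{prop:max_cong}, together with \Cref{prop:max_cong}), and the identification $\delta\colon(M/\varpi^\infty)^G\xrightarrow{\sim}H_c^1(G;M)$ of \Cref{thm:cong_gp_coh}.

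First I would fix the notation $M=\Zp^{\otimes k}[\chi]$, $G=\Zpx\x\zx{N'}$, and reinterpret \Cref{thm:main}. Its equivalence (i) $\iff$ (v) says that, for an ideal $\Ical\trianglelefteq\Zp[\chi]$, there is an Eisenstein series $f\in\Ecal_k(p^v,\Gamma_1(N'),\chi)$ with $f(q)\in 1+\Ical q\llb q\rrb$ precisely when $M$ is a trivial $G$-representation modulo $\Ical$. The collection of ideals modulo which $M$ becomes trivial is closed upwards---if $\Ical\subseteq\mathcal{J}$ and $M/\Ical M$ is $G$-trivial then so is its quotient $M/\mathcal{J}M$---so it has a least element, which is simultaneously ``the maximal congruence of Eisenstein series in $\Ecal_k(p^v,\Gamma_1(N'),\chi)$'' and ``the maximal congruence of $M$ as a $G$-representation.'' Thus the two notions of maximal congruence agree.

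Next I would use that $\Zp[\chi]\cong\Zp[\zeta_n]$ is a $p$-complete discrete valuation ring with some uniformizer $\varpi$, and that $G$ is topologically finitely generated. By the Lemma preceding \Cref{prop:max_cong}, the maximal congruence $\Ical$ of $M$ is the unique ideal with $\colim_m\big((M/\varpi^m)^G\big)=M/\Ical$ as $\Zp[\chi]$-modules; by \Cref{prop:max_cong} this colimit is naturally identified with $(M/\varpi^\infty)^G$; and by \Cref{thm:cong_gp_coh} the connecting map $\delta$ identifies $(M/\varpi^\infty)^G$ with $H_c^1(G;M)$. Stringing these isomorphisms together, $\Ical$ is the maximal congruence of Eisenstein series in $\Ecal_k(p^v,\Gamma_1(N'),\chi)$ if and only if $H_c^1(\Zpx\x\zx{N'};\Zp^{\otimes k}[\chi])\simeq\Zp[\chi]/\Ical$. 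To see this is a genuine equivalence and not merely a one-way implication, I would note that in a discrete valuation ring the proper nonzero quotients $\Zp[\chi]/\mfrak^a$ are finite of pairwise distinct order while $\Zp[\chi]/(0)=\Zp[\chi]$ is infinite, so the isomorphism type of $\Zp[\chi]/\Ical$ pins down $\Ical$.

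I expect no serious obstacle here: every hard input---\Cref{thm:rhc_formal_A-mod}, the factorization of $\rho^{k,\chi}$ through $\rho^1\x\lambda_{N'}$, the relative Igusa theorem underpinning \Cref{prop:char_surj}, and the vanishing $H_c^1(G;\Qp^{\otimes k}(\chi))=0$ used in \Cref{thm:cong_gp_coh}---is already in place, so the corollary is a matter of assembling them. The one point that warrants care is the bookkeeping in the second paragraph: checking that the ``maximal congruence'' of the family of Eisenstein series, defined through $q$-expansions, is literally the same ideal as the ``maximal congruence'' of the representation $\Zp^{\otimes k}[\chi]$, for which the upward-closedness remark is exactly what is needed.
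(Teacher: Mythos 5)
Your proposal is correct and takes essentially the same route as the paper: the paper's proof is the one-liner ``combining \Cref{thm:main} and \Cref{thm:cong_gp_coh},'' and you have simply unwound that combination through the Lemma before \Cref{prop:max_cong}, \Cref{prop:max_cong} itself, and the DVR structure of $\Zp[\chi]$. The extra bookkeeping you supply (upward-closedness of the set of ideals modulo which the representation trivializes, and the fact that in a DVR the isomorphism type of $\Zp[\chi]/\Ical$ pins down $\Ical$) is a sound and welcome fleshing-out of details the paper leaves implicit.
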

	Comparing \Cref{cor:cong_gp_coh} with \Cref{prop:E_kchi_max_congruence} and \Cref{prop:numerator_valuation}, the group cohomology $H_c^1(\Zpx\x\zx{N'};\Zp^{\otimes k}[\chi])$ computes the \emph{denominator} of $\frac{B_{k,\chi}}{2k}\in \Qp(\chi)$ under the assumptions in Cases I-V in \Cref{thm:rep_congruence}.  In Cases VI and VII, this cohomological computation sheds light on the \emph{numerator} of $\frac{B_{k,\chi}}{2k}$ when $(p-1)\nmid k$ (still does not determine the valuation in general).
	
	\begin{rem}
		When the character $\chi$ is trivial, the group cohomology $H_c^1(\Zpx;\Zp^{\otimes k})$ computes the image of the $J$-homomorphism in the stable homotopy groups of spheres. The precisely connection was laid out by the author in \cite{nz_Dirichlet_J}. Therefore we have given a new explanation between congruences of the normalized Eisenstein series $E_{2k}$ of level $1$ and the image of $J$ in this paper. More generally when the character is non-trivial, the author constructed a family of Dirichlet $J$-spectra and $K(1)$-local spheres, whose homotopy groups are computed by the group cohomologies $H_c^s(\Zpx\x\zx{N'};\Zp^{\otimes t}[\chi])$ in the same paper. 
	\end{rem}
	\begin{rem}
		Generalized Bernoulli numbers are also related to the \textbf{Dirichlet $L$-functions} attached to the Dirichlet character $\chi$ \cite{p_adic_L}:  
		\[L(1-k,\chi)=-\frac{B_{k,\chi}}{k}.\]
		When $\chi$ is the trivial character, the Dirichlet $L$-function is the same as the Riemann $\zeta$-function (up to Euler factors at primes dividing $N$). A theorem of Soul\'e \cite{Soule_K-thy_anneaux} implies that the group cohomology $H^1_c(\Zpx;\Zp^{\otimes {2k}})$ is isomorphic to the \'etale cohomology group $H^1_{\et}(\Z[\frac{1}{p}],\Zp(2k))$. The latter computes the $p$-part of the denominator of the special value $\zeta(1-2k)$ of the Riemann $\zeta$-function by the Lichtenbaum Conjecture. This is a special case of the (confirmed) Bloch-Kato Conjecture for the Riemann zeta function. A similar connection between \'etale cohomology and Dirichlet $L$-functions is proved in \cite{Huber-Kings_BK_IMC,Burns-Greither_2003}.
	\end{rem}
	\appendix
	\section{A Riemann-Hilbert correspondence arising from formal groups}\label{Sec:appendix}
	\subsection{Dieudonn\'e modules of formal groups}
	In this subsection, we will compute the Dieudonn\'e modules of the Honda formal groups. First, let's recall the basic definition of Dieudonn\'e modules of formal groups following \cite{Katz_crystalline}. 
	\begin{defn}\label{defn:Dieudonne_mod}
		Let $\kappa$ be a finite field of characteristic $p$ and $\W\kappa$ be its ring of Witt vectors. Let $R$ be a flat $\W\kappa$-algebra such that $R/p$ is an integrally closed domain over $\kappa$. In addition, assume that $R$ admits an endomorphism $\varphi\colon  R\to R$ that lifts the Frobenius $\varphi_0$ on $R/p$ (the $p$-th power map). Let $\Gh_0$ be a formal group over a finite field $R/p$. The \textbf{Dieudonn\'e module} of $\Gh_0$ is a triple $\Dbb(\Gh_0)=(M,F,V)$ consisting of:
		\begin{itemize}
			\item $M=PH^1_{\dR}(\Gh/R)$, where $\Gh$ is a lift of $\Gh_0$ to $R$; $PH^1_{\dR}$ stands for the primitives in first de Rham cohomology. This is explicitly identified as
			\begin{equation}\label{PHdr}
				PH^1_{\dR}(\Gh/R) \cong  \frac{\{f(t)\in \mathbb{Q}_p\otimes R\llb t\rrb\mid f(0)=0,~ df\text{ and } \partial f \text{ are integral}\}}{\{f(t)\in R\llb t\rrb\mid f(0)=0\}}
			\end{equation}
			in \cite[page 193]{Katz_crystalline}, where $\partial f(x,y)=f(x)-f(x+_{\Gh}y)+f(y)$.
			\item $F\colon \varphi^*M\to M$ and $V\colon M\to \varphi^*M$ are induced by the factorization of the $[p]$-series map on $\Gh_0$:
			\begin{equation*}
				\begin{tikzcd}
					{\Gh_0} \arrow[rd] \arrow[rr,"{[p]}"]&&{\Gh_0}\\
					&\varphi_0^*\Gh_0 \arrow[ur]&
				\end{tikzcd}.
			\end{equation*}
		\end{itemize}
	\end{defn}
	\begin{rem}
		$M=PH^1_{\dR}(\Gh/R)$ does not depend on the lift $\Gh_0$ to $R$. This is because the ideal $(p)\trianglelefteq R$ has a divided power structure. Moreover, the assignment $\Gh_0\mapsto PH^1_{\dR}(\Gh/R)$ is functorial in $\Gh_0$.
	\end{rem}
 	To compute the Dieudonn\'e module, we need to simplify \eqref{PHdr}.
	\begin{thm}[{\cite{Cartier_p_typical},\cite[(A2.2.4)]{green}}] 
		Let $\Gh$ be a formal group over a $p$-local algebra $R$. Then $\Gh$ has a coordinate $t$ such that its logarithm has the form
		\begin{equation*}
		\log_{\Gh}(t)=\sum_{i=0}^{\infty}\frac{m_i}{p^i} t^{p^i}, \qquad m_0=1, m_i\in R. 
		\end{equation*}
		This is called the $p$\textbf{-typical} coordinate of $\Gh$. The $p$-series of a $p$-typical formal group satisfies:
		\begin{equation*}
			[p]_{\Gh}(t)=pt+_{\Gh}\sum_{i\ge 1}{}^{\Gh}v_it^{p^i}.
		\end{equation*}
	\end{thm}	
	\begin{rem}
		There are several different, but equivalent, defintions of Dieudonn\'e modules in the literature. The one in \cite{Cartier_p_typical} switches the $F$ and the $V$ maps in \Cref{defn:Dieudonne_mod}. See \cite[\S5.5]{Katz_crystalline} for a comparison of different definitions.
	\end{rem}
	\begin{defn}
		Let $\kappa$ be a perfect field of characteristic $p$, containing $\mathbb{F}_{p^h}$. The Honda formal group $\Gamma_h$ of height $h$ is a one-dimensional commutative formal group scheme over $\kappa$ with a coordinate $t$, such that $[p]_{\Gamma_h}(t)=t^{p^h}$.
	\end{defn}
	\begin{exmp}
		When $h=1$, $\Gamma_1\cong \Gmh$, since the latter has $p$-series $[p](t)=t^p$ over $\Fp$.
	\end{exmp}
	Choose a lift of $\Gamma_h$ to $\W\kappa$ with a $p$-typical coordinate such that $[p]_{\Gamma_h}=pt+_{\Gamma_h}t^{p^h}$.	We can find a basis for $\Dbb(\Gamma_h)$:
	\begin{equation*}
		f_0(t)=\sum_{i\ge 0}\frac{t^{p^{ih}}}{p^i},\quad
		f_1(t)=\sum_{i\ge 0}\frac{t^{p^{ih+1}}}{p^i},\quad\cdots,\quad
		f_{h-1}(t)=\sum_{i\ge 0}\frac{t^{p^{ih+h-1}}}{p^i},
	\end{equation*} such that the matrix representations of $F$ and $V$ with respect to this basis $\{f_0,\cdots,f_{h-1}\}$ are:
	\begin{align*}
		F=\begin{pmatrix}
			&pI_{h-1}\\
			1&
		\end{pmatrix},&&V=\begin{pmatrix}
			&p\\
			I_{h-1}&
		\end{pmatrix}.
	\end{align*}
	In particular, we have $V^h(f_0)=f_0\left(t^{p^h}\right)=pf_0(t)$. When $h=1$, we have $F(f_0)=f_0$ and $V(f_0)=pf_0$.
	\subsection{Statement of the correspondence}	
	\begin{thm}\label{thm:rhc}
		Let $R$ be as in \Cref{defn:Dieudonne_mod}. Then the following categories are equivalent:
		\begin{enumerate}
			\item \Dieudonne modules $(M,F,V)$ such that $M/V$ is an invertible $R/p$-module generated by $\gamma\in M$ such that
			\[F\gamma\equiv V^{h-1}(a_0\gamma) \mod V^{h}\text{, where }a_0\in(R/p)^\x.\]
			\item $\piet(R)$-representations in rank $1$ free $\mathcal{O}_h$-modules, where
			\[\mathcal{O}_h=\W\mathbb{F}_{p^h}\langle\sigma\rangle/\{\sigma^h=p, a^{\varphi}\sigma=\sigma a \}.\]
			\item One dimensional formal groups of height $h$ over $R/p$.
		\end{enumerate}
	\end{thm}
	\begin{proof}
		We prove the equivalence as follows:
		\begin{description}
			\item[(1)$\iff$(3)] (1) is the description of the Dieudonn\'e module of a height $h$ formal group. The equivalence was proved in \cite{de_Jong_crystalline_Dieudonne}.
			\item[(2)$\iff$(3)] $\Ocal_h$ is the algebra of endomorphisms of $\Gamma_h$ over $\Fbb_{p^h}$. The equivalence follows from the theory of Galois descent for formal groups and Lazard's result \cite[Théorème IV]{Lazard_1955} that formal groups of the same height are \'etale locally isomorphic to each other.\qedhere
		\end{description}
	\end{proof}
	The Riemann-Hilbert correspondence (1)$\iff$(2) is then the equivalence of the Dieudonn\'e module data and the Galois descent data of formal groups. When $h=1$, we recover Katz's \Cref{thm:padic41Wk}. Congruences of the categories in this equivalence are related to the finite subgroup schemes of the formal groups.
	\begin{thm}\label{thm:rhc_cong}
		Suppose $(M,F,V)$, $\rho\in H_c^1(\piet(R);\mathcal{O}_h^\x)$ and $\Gh$ correspond to each other in \Cref{thm:rhc}. Then the followings are equivalent:
		\begin{enumerate}
			\item There is a generator $\gamma\in M$ such that $F\gamma\equiv V^{h-1}\gamma\mod V^{h-1+m}$.
			\item The Galois representation $\rho$ is trivial mod $\sigma^m$.
			\item The finite subgroup scheme of $\Gh$ of rank $p^m$ is isomorphic to the corresponding rank $p^m$ finite subgroup scheme of the Honda formal group $\Gamma_h$ of height $h$. 
		\end{enumerate}
	\end{thm}
	\begin{proof}
		The Dieudonn\'e modules and Galois descent data for finite subgroup schemes of $\Gh_0$ are described in \Cref{prop:fin_gp_d-mod} and \Cref{prop:fin_gp_gal_desc}, respectively. Using the computation of $\Dbb(\Gamma_h)$ at the end of the previous subsection, the proof of the theorem is now similar to that of \Cref{thm:rhc_formal_A-mod} in the height $1$ case.
	\end{proof} 
	We recover \Cref{thm:padic41} when $h=1$. \Cref{thm:rhc} and \Cref{thm:rhc_cong} also hold for $p$-divisible formal $A$-modules of finite dimensions in general.
	\printbibliography
\end{document}